\newcommand{\ubar}{\underline}
\newtheorem{prop}{Proposition}
\newtheorem{theo}[prop]{Theorem}
\newtheorem{lemm}[prop]{Lemma}
\newtheorem{rmk}[prop]{Remark}
\newcommand{\be}{\begin{equation}}
\newcommand{\ee}{\end{equation}}
\newcommand{\lt}{\left}
\newcommand{\rt}{\right}
\newcommand{\goto}{\rightarrow}
\newcommand{\al}{\alpha}
\newcommand{\e}{\epsilon}
\renewcommand{\leq}{\leqslant}
\renewcommand{\geq}{\geqslant}
\newcommand{\td}{\tilde}
\newcommand{\R}{\mathbb{R}}
\newcommand{\M}{\mathcal{M}}
\newcommand{\ka}{\kappa}
\newcommand{\z}{\mathbf{z}}
\newcommand{\ga}{\gamma}
\newcommand{\p}{\partial}
\newcommand{\lus}{\ubar{u}^*}
\newcommand{\lu}{\ubar{u}}
\newcommand{\uu}{\bar{u}}
\newcommand{\gas}{\gamma^*}
\newcommand{\ju}[2]{\begin{array}{#1}#2\end{array}}
\numberwithin{equation}{section}
\begin{document}
\setlength{\baselineskip}{1.2\baselineskip}

\title[Entire convex curvature flow]
{Entire convex curvature flow in Minkowski space}

\author{Zhizhang Wang}
\address{School of Mathematical Science, Fudan University, Shanghai, China}
\email{zzwang@fudan.edu.cn}
\author{Ling Xiao}
\address{Department of Mathematics, University of Connecticut,
Storrs, Connecticut 06269}
\email{ling.2.xiao@uconn.edu}
\thanks{2010 Mathematics Subject Classification. Primary 53C42; Secondary 35J60, 49Q10, 53C50.}
\thanks{Research of the first author is  sponsored by Natural Science  Foundation of Shanghai, No.20JC1412400,  20ZR1406600 and supported by NSFC Grants No.11871161.}

\begin{abstract}
In this paper, we study fully nonlinear curvature flows of noncompact spacelike hypersurfaces in Minkowski space.
We prove that if the initial hypersurface satisfies certain conditions, then the flow exists for all time. Moreover, we show that after rescaling
the flow converges to the future timelike hyperboloid, which is a self-expander.

\end{abstract}

\maketitle

\section{Introduction}
\label{int}
Let $\R^{n, 1}$ be the Minkowski space with the Lorentzian metric
\[ds^2=\sum\limits_{i=1}^ndx_i^2-dx_{n+1}^2.\]
In this paper, we study fully nonlinear curvature flows of noncompact spacelike hypersurfaces in Minkowski space.
Spacelike hypersurfaces $\M\subset\R^{n, 1}$ have an everywhere timelike normal
field, which we assume to be future directed and to satisfy the condition
$\lt<\nu, \nu\rt> =-1.$ Such hypersurfaces can be locally expressed as the graph of a function
$u: \R^n\rightarrow\R$ satisfying $|Du(x)|< 1$ for all $x\in\R^n.$

We consider a family of spacelike embeddings
\[X(\cdot, t):\R^n\rightarrow\R^{n, 1}\]
with corresponding hypersurfaces $\M(t)=\{(x, u(x, t))\mid x\in\R^n\}$ satisfying the evolution equation
\be\label{int.1}
\frac{\p X}{\p t}=F^\al\nu.
\ee
Here $\al>0,$ $F=f(\ka_1, \cdots, \ka_n)=\frac{1}{f_*(\ka_1^{-1},\cdots,\ka_n^{-1})},$ and $f_*$ satisfies the following conditions:
\be\label{condition 2}
\mbox{$f_*$ is a concave function in $\Gamma_n^+,$}
\ee
where $\Gamma_n^+:=\{\ka\in\R^n: \mbox{each component $\ka_i>0$}\},$
\be\label{condition 3}
f_*>0\,\,\mbox{in $\Gamma_n^+,$ $f_*=0$ on $\p\Gamma_n^+,$}
\ee
and
\be\label{condition 1}
f_*^i:=\frac{\p f_*}{\p\ka_i}>0\,\,\mbox{ in $\Gamma_n^+,$ $1\leq i\leq n.$}
\ee
In addition,
we shall assume $f_*$ satisfies the following more technical assumptions. These include
\be\label{condition 4}
\mbox{$f_*$ is homogeneous of degree one,}
\ee
\be\label{condition 5}
\mbox{there exists $\epsilon_0>0$ such that}
\sum f_*^i\ka_i^2\geq \epsilon_0f_*\sum\ka_i,
\ee
and for every $C>0$ and every compact set $E$ in $\Gamma_n^+,$ there exists $R=R(E, C)>0$ such that
\be\label{condition 6}
f_*(\ka_1, \cdots, \ka_{n-1}, \ka_n+R)>C, \forall \ka\in E.
\ee
Finally, for our convenience, we also suppose $f_*$ is normalized
\be\label{condition 7}
f_*(1,\cdots, 1)=1.
\ee
All of these assumptions are satisfied by $f_*=(s_n)^{\frac{\beta}{n}}(s_k)^{\frac{1-\beta}{k}},$ where $0\leq k\leq n,$ $\beta\in(0, 1],$
and
\[s_k=\frac{1}{{n\choose k}}\sum\limits_{1\leq i_1<\cdots<i_k\leq n}\ka_{i_1}\cdots\ka_{i_k}\]
is the normalized $k$-th elementary symmetric polynomial ($s_0=1$). One can find more examples of $f_*$ in \cite{GS04}.

We also assume the initial hypersurface $\M_0=\{(x, u_0(x))\mid x\in\R^n\}$ satisfyies:\\
\textbf{Conditions A:}
(1) strictly convex,
(2) spacelike,
(3) the curvatures of $\M_0,$ denoted by  $\ka[\M_0]=(\ka_1, \cdots, \ka_n),$ and their derivatives are uniformly bounded.
We also need the following condition:
\[\tag{4} \sqrt{|x|^2+C_0}<u_0(x)<\sqrt{|x|^2+C_1}\label{initial-condition},\]
for some $C_1>C_0>0.$

Note that condition \eqref{initial-condition} is needed to obtain the local $C^1$ estimates (see Subsection \ref{lc1}).
\begin{rmk}
\label{rmk1}
It is clear that condition \eqref{initial-condition} implies
$$u_0(x)\rightarrow |x|, \text{ as } |x|\rightarrow +\infty.$$ In view of Lemma 14 of \cite{WX20}, we obtain the Legendre transform of $u_0$ which we denote by
$u_0^*,$ satisfies $u_0^*=0$ on $\p B_1.$ By the strict convexity of $u_0^*$ we have $u_0^*<0$ in $B_1.$
\end{rmk}

There is a great deal of literature concerning the entire graphical curvature flow. In Minkowski space, Ecker \cite{Ecker} proved the long time existence for entire graphical solutions of the mean curvature flow. In \cite{Aa}, Aarons studied the forced mean curvature flow of entire graphs in Minkowski space. With initial data $\M_0$ being a smooth strictly spacelike hypersurface, Aarons proved long time existence of solutions to the flow where the forcing term is smooth. Furthermore,  Aarons also obtained some convergence results when the forcing term is a constant and $\M_0$ has bounded curvatures. In \cite{BS09}, Bayard and Schn\"{u}rer constructed solutions to the logarithmic Gauss curvature flow for convex spacelike hypersurfaces with prescribed Gauss map images. They showed that the solutions converge to the constant Gauss curvature hypersurfaces. In Euclidean space, entire graphical solutions of the mean curvature flow have been studied by Ecker and Huisken in \cite{EH}. Entire graphical solutions of fully-nonlinear flows were recently studied by Daskalopoulos and her collaborators in
\cite{CDKL, DH}.

The flow \eqref{int.1} is well understood when the initial entire graphical hypersurface is co-compact. It was shown in \cite{ACFMc} that for any co-compact, spacelike, uniformly locally convex initial embedding $\M_0$, there exists a unique co-compact solution $\M_t$ satisfying equation \eqref{int.1} for $t\in(0, \infty).$ Moreover, after rescaling, $\M_t$ converges in $C^\infty$ to an embedding with image equal to the future timelike hyperboloid $\mathbb{H}^n.$
As it was proved in Section 12 of \cite{ACFMc}, this result implies a convergence result for a class of special solution of the cross-curvature flow (see \cite{CH}) of negatively curved Riemannian metric on three-manifolds.

This work is concerned with the long time existence and convergence of \eqref{int.1} for initial data
$\M_0$ which is an entire graph satisfying \textbf{Conditions A}. In particular, we prove the following theorem.

\begin{theo}
\label{int-them1}
Let $\M_0$ be an entire spacelike hypersurface satisfying \textbf{Conditions A,} and the dual of function $f,$ i.e., $f_*,$ satisfies \eqref{condition 1}-\eqref{condition 6}.  Then, given $\M_0,$ there exists a unique $C^\infty$
smooth, entire, spacelike, strictly convex solution $\M_t:=\{(x, u(x, t))| (x, t)\in\R^n\times(0, \infty)\}$ of the equation $\eqref{int.1}$ satisfying for any fixed $t\in(0, \infty),$ $u(x, t)-|x|\goto 0$ as $|x|\goto \infty.$
The rescaled embeddings given by $\td{X}=\frac{X}{[(1+\al)F(I)\td{t}]^{1/(1+\al)}}$ converge in $C^\infty$ to an embedding with image equal
to the future timelike hyperboloid $\mathbb{H}^n,$ where $\td{t}=(1+\al)^{-1}+t$ and $I$ is the identity matrix.
\end{theo}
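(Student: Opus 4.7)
The plan is to combine a compact approximation scheme with carefully localized a priori estimates, using the asymptotic bound $\sqrt{|x|^2+C_0}<u_0<\sqrt{|x|^2+C_1}$ to control behavior near the lightcone, and then extract the entire solution as a limit. A natural first step is to approximate $\M_0$ by a family of initial hypersurfaces $\M_0^R$ defined on large balls $B_R$, modified near $\partial B_R$ so that they coincide with an auxiliary hyperboloid of the form $\sqrt{|x|^2+C}$ (or are co-compact perturbations thereof). On each such approximation, the co-compact result of \cite{ACFMc} furnishes a global smooth spacelike strictly convex solution $\M_t^R$, to which we have access to all the structural estimates. The strategy is then to pass to the limit $R\to\infty$ after establishing estimates that do not deteriorate with $R$.

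The heart of the argument is the chain of \emph{local} a priori estimates. For $C^0$, I would use the hyperboloids $\sqrt{|x|^2+C_0+(1+\al)F(I)t}^{\,\,}$ and the analogous one with $C_1$ as barriers from below and above, which are exact self-similar solutions in view of the homogeneity \eqref{condition 4} and the normalization \eqref{condition 7}; this both propagates the asymptotic behavior $u(x,t)-|x|\to 0$ and gives the required two-sided control. For the local gradient bound, one considers the ``support'' type quantity $u-\langle x,Du\rangle$ or equivalently works with the Legendre transform $u^*$ defined on $B_1$ with $u^*|_{\p B_1}=0$ (cf.\ Remark \ref{rmk1}); here the assumption \eqref{initial-condition} and the barrier $\sqrt{|x|^2+C_0}$ keep $|Du|$ strictly less than $1$ on any compact set of $\R^n$. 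The curvature estimates are the most delicate part: I would differentiate the evolution equation and consider a test function of the form $\log\ka_{\max}+\varphi(|Du|)+\psi(u-\sqrt{|x|^2+C_t})$ on the graph, using concavity \eqref{condition 2} to handle the bad third-order terms and the structural inequality \eqref{condition 5} to absorb the gradient terms coming from the Lorentzian geometry; the compactly supported cutoff in the transformed coordinates is what makes this a local estimate. Once $F$ is bounded from below and the curvatures are locally bounded, \eqref{condition 6} upgrades this to a positive lower bound on all $\ka_i$, and Evans--Krylov/Schauder theory gives $C^{k,\beta}_{\mathrm{loc}}$ bounds.

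With these uniform local estimates, a standard diagonal extraction along $R\to\infty$ produces an entire smooth spacelike strictly convex solution $\M_t$ for all $t>0$ satisfying \eqref{int.1} with initial data $\M_0$; uniqueness follows from a comparison argument applied to the difference $u-v$ of any two such solutions between the sandwiching hyperboloids, exploiting concavity of $-F^\al$ in the Hessian. The asymptotic statement $u(x,t)-|x|\to 0$ is immediate from the barrier argument above.

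For the rescaling, set $\td X=X/[(1+\al)F(I)\td t]^{1/(1+\al)}$ with $\td t=(1+\al)^{-1}+t$ so that the hyperboloid $\mathbb{H}^n$ is a fixed point. The rescaled flow, written in self-similar time $\tau=\frac{1}{1+\al}\log\bigl((1+\al)F(I)\td t\bigr)$, becomes a gradient-like flow whose stationary solutions with the correct asymptotics are precisely the expanding self-similar hyperboloids. Using the upper and lower barriers, which both converge to $\mathbb{H}^n$ after rescaling, together with a monotonicity argument of the type in \cite{ACFMc} applied to a quantity like $F^\al\langle X,\nu\rangle-(1+\al)^{-1}\langle X,X\rangle$, one obtains $C^0$ convergence of the rescaled embedding to $\mathbb{H}^n$ on compact sets; the local higher-order estimates then bootstrap this to $C^\infty$ convergence. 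The hardest single step, I expect, is the local curvature estimate: in the noncompact setting one must prevent the principal curvatures from blowing up along sequences escaping to infinity where the graph becomes asymptotically null, and this is where condition \eqref{condition 5} together with the precise quantitative gap provided by $C_0<C_1$ in \eqref{initial-condition} must be used in a delicate way.
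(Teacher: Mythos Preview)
Your overall architecture---barriers, local estimates, diagonal extraction, rescaled convergence---is sound, but the implementation diverges from the paper's in a way that creates a real gap. The paper does \emph{not} approximate by compact or co-compact pieces in the primal picture over $B_R\subset\R^n$; instead it passes at once to the Legendre transform $u^*$ on $B_1$ (equivalently, the support function $v=\langle X,\nu\rangle$ on $\mathbb{H}^n$) and solves a Dirichlet-type approximate problem on balls $B_r\subset B_1$ with boundary data matched to the special self-similar solution, then sends $r\to 1$. Your plan to invoke \cite{ACFMc} for the approximants does not work as stated: co-compactness there means invariance under a cocompact discrete subgroup of the Lorentz group, and one cannot manufacture such invariance by patching an arbitrary $\M_0$ to a hyperboloid outside $B_R$. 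If you replace this by a genuine Dirichlet problem on $B_R$, you are back to needing boundary $C^2$ estimates, which is exactly what the dual framework in the paper is designed to deliver (Lemmas \ref{c2bf-lem1}--\ref{c2bf-lem4} and the Anmin Li--type barrier constructions for $C^1$). A second, smaller slip: your proposed $C^0$ barriers $\sqrt{|x|^2+C_i+(1+\al)F(I)t}$ are exact solutions only when $\al=1$; for general $\al$ the self-similar profile is $\sqrt{|x|^2+a^2[(1+\al)\td t]^{2/(1+\al)}}$, and the paper uses precisely these (in dual form) as sub/supersolutions.

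For convergence the paper also takes a different, more explicit route than your monotonicity sketch. Writing $\Phi=F_*^{-\al}(\td\Lambda)+\td v$ for the rescaled support function $\td v$, one computes $L\Phi=-(1+\al F_*^{-\al-1}\sum F_*^{kk})\Phi$ and hence $L(e^{2\td\tau}\Phi^2)\le 0$, so $\Phi\to 0$ exponentially by the maximum principle; combined with the two-sided bounds on $\td v$ this pins down $F_*^{-\al}$ uniformly and feeds back into the local $C^1$ and $C^2$ estimates to give $C^\infty_{\rm loc}$ convergence to the hyperboloid. Your proposed quantity $F^\al\langle X,\nu\rangle-(1+\al)^{-1}\langle X,X\rangle$ is not the one used, and it is not clear it enjoys a clean sign under the linearized operator in the noncompact setting. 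In short: the missing idea is to move entirely to the dual picture on $B_1$ (or equivalently to the hyperbolic model for $v$), where both the approximation scheme and the boundary/interior $C^2$ estimates become tractable.
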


\begin{rmk}
\label{intrmk0}
Notice that here we study the complete flow instead of the co-compact flow, we need to deal with the behavior of the flow at infinity. Therefore, our techniques here are completely different from those in \cite{ACFMc}. Moreover, in order to obtain that this flow converges, we need to develop new methods to obtain interior estimates. The methods we developed here are completely different from those used in studying curvature flows with forcing terms (see \cite{Aa, BS09} for examples).
\end{rmk}

\begin{rmk}
\label{intrmk1}
In Section \ref{conv} we will see that the rescaled embeddings $\td{X}$ converge to a self-expander satisfying
\be\label{rmk.1}F^\al=-\lt<X, \nu\rt>,\ee which happens to be the future timelike hyperboloid. On the other hand, for any hypersurface $\M$ satisfies
 \eqref{rmk.1}, we have $[(1+\alpha)\td{t}]^{\frac{1}{1+\alpha}}X$ satisfies flow equation \eqref{int.1}. By the result in \cite{WX22}, we know that there exist infinitely many hypersurfaces that satisfying \eqref{rmk.1}. Therefore, in order for the rescaled flow to converge to the hyperboloid, the condition \eqref{initial-condition} in \textbf{Conditions A} is necessary.
\end{rmk}

The organization of the paper is as follows. In Section \ref{special solution} we give a special solution to equation \eqref{int.1}. This solution inspires us to
formulate the approximate problem and construct barrier functions in later sections. In Section \ref{approximate problem} we prove the long time existence of the approximate problem. Local $C^1$ and $C^2$ estimates are established in Section \ref{localest}. Combining with Section \ref{approximate problem}, we prove the existence of solution to \eqref{int.1} for all time. In Section \ref{conv} we show that after rescaling the solution of \eqref{int.1} converges to the unit future hyperboloid as $t\goto\infty.$

\bigskip

\section{Special solution of equation \eqref{int.1}}
\label{special solution}
In this section, we will introduce a special solution for equation \eqref{int.1}. We will also discuss the Legendre Transform of this special solution.
In later sections, we will use this special solution to construct various barrier functions.

Let's first note that since $\M$ is spacelike, the position vector of $\M$ can be written as $X=(x, u(x)).$
After reparametrization, we can rewrite \eqref{int.1} as following equation
\be\label{ss.1}
\left\{
\begin{aligned}
\frac{\p u}{\p t}&=F^\al\lt(\frac{1}{w}\gamma^{ik}u_{kl}\gamma^{lj}\rt)w\\
u(\cdot, 0)&=u_0(x),
\end{aligned}
\right.
\ee
where $w=\sqrt{1-|Du|^2},$ $\gamma^{ik}=\delta_{ik}+\frac{u_iu_k}{w(1+w)},$ and $u_{kl}=D^2_{x_kx_l}u$ is the ordinary Hessian of $u.$
Let $\z=\sqrt{|x|^2+[(1+\al) t]^{\frac{2}{1+\al}}},$ then a straightforward calculation yields
\[\z_t=\lt(|x|^2+[(1+\al)t]^{\frac{2}{1+\al}}\rt)^{-\frac{1}{2}}[(1+\al)t]^{\frac{1-\al}{1+\al}},\]
\[\kappa[\M_{\z}]=[(1+\al)t]^{-\frac{1}{1+\al}}(1, 1, \cdots, 1),\]
\[F^\al(\ka[\M_{\z}])=[(1+\al)t]^{-\frac{\al}{1+\al}},\]
and
\[w=\sqrt{1-\frac{|x|^2}{|x|^2+[(1+\alpha) t]^{\frac{2}{1+\al}}}}=\frac{[(1+\al)t]^{\frac{1}{1+\al}}}{\z}.\]
Therefore, $\z=\sqrt{|x|^2+[(1+\al)t]^{\frac{2}{1+\al}}}$ satisfies \eqref{ss.1} for $t\in (0, \infty).$ We can see that, for $\M_t=\{(x, \z(x, t))\mid (x, t)\in\R^n\times\R_+\},$ $\ka[\M_t]\goto 0$ as $t\goto\infty.$ Moreover, let
$\tilde{X}=\frac{X}{[(1+\al)t]^{\frac{1}{1+\al}}}$ where $X$ is the position vector of $\M_t.$ Then, for any fixed $t>0,$ the rescaled graph $\td{\M}(t)=\{(x, \sqrt{|x|^2+1})\mid x\in\R^n\}$ is the standard hyperboloid satisfying $F^\al(\ka[\M])=-\lt<X, \nu\rt>.$

Next, let's consider the Legendre transform of $\z.$  By definition we have
\begin{align*}
\z^*&=x\cdot D\z-\z=-\frac{[(1+\alpha) t]^{\frac{2}{1+\al}}}{\sqrt{|x|^2+[(1+\alpha) t]^{\frac{2}{1+\al}}}}\\
&=-[(1+\al)t]^{\frac{1}{1+\al}}\sqrt{1-|D\z|^2}=-[(1+\al)t]^{\frac{1}{1+\al}}\sqrt{1-|\xi|^2}.
\end{align*}
It's easy to verify that $\z^*$ is a solution of
\be\label{ss.2}
\lt\{
\begin{aligned}
\z^*_t&=-F_*^{-\al}(w^*\gamma^*_{ik}\z^*_{kl}\gamma^*_{lj})w^*\,\,&\mbox{in $B_1\times(0, \infty)$}\\
\z^*(\cdot, t)&=0\,\,&\mbox{on $\partial B_1\times[0, \infty)$}\\
\z^*(\cdot, 0)&=0 \,\,&\mbox{in $ B_1\times\{0\}$},
\end{aligned}
\rt.
\ee
where $w^*=\sqrt{1-|\xi|^2},$ $\gas_{ik}=\delta_{ik}-\frac{\xi_i\xi_k}{1+w^*},$ $u^*_{kl}=\frac{\p^2u^*}{\p\xi_k\p\xi_l},$
and \[F_*^{-\al}(w^*\gas_{ik}u^*_{kl}\gas_{lj})=f_*^{-\al}(\ka^*[w^*\gas_{ik}u^*_{kl}\gas_{lj}]).\] Here
$\ka^*[w^*\gas_{ik}u^*_{kl}\gas_{lj}]=(\ka_1^*, \cdots, \ka_n^*)$ are the eigenvalues of the matrix $(w^*\gas_{ik}u^*_{kl}\gas_{lj}).$
Since when $\M_t$ satisfies equation $\eqref{int.1},$ its support function $v=\lt<X, \nu\rt>$ satisfies
 $v_t=-F^{\al}(\ka[\M_t]).$ In this paper we will consider the solvability of the following problem:
\be\label{ss.3}
\lt\{
\begin{aligned}
u^*_t&=-F_*^{-\al}(w^*\gamma^*_{ik}u^*_{kl}\gamma^*_{lj})w^*\,\,&\mbox{in $B_1\times(0, \infty)$}\\
u^*(\cdot, t)&=0\,\,&\mbox{on $\partial B_1\times[0, \infty)$}\\
u^*(\cdot, 0)&=u_0^*\,\,&\mbox{on $B_1\times\{0\}.$}
\end{aligned}
\rt.
\ee

\bigskip
\section{Solvability of the approximate problem }
\label{approximate problem}
Notice that equation \eqref{ss.3} is degenerate. Inspired by the special solution $\mathbf{z}^*$ of \eqref{ss.2}, in this section we will consider the solvability of the following approximate problem:
\be\label{ap.1}
\lt\{
\begin{aligned}
(u^*_r)_t&=-F_*^{-\al}(w^*\gamma^*_{ik}u^*_{kl}\gamma^*_{lj})w^*\,\,&\mbox{in $B_r\times(0, T]$}\\
u^*_r(\cdot, t)&=\hat{u}^*_0-[(1+\al)\td{t}]^{\frac{1}{1+\al}}\sqrt{1-r^2}\,\,&\mbox{on $\p B_r\times[0, T]$}\\
u^*_r(\cdot, 0)&=\hat{u}^*_0-\sqrt{1-|\xi|^2}=u_0^*\,\,&\mbox{on $B_r\times\{0\},$}
\end{aligned}
\rt.
\ee
where $\td{t}=t+(1+\al)^{-1}$ and $T>0$ is an arbitrary positive number.
\begin{rmk}
\label{rmk2}
Note that for our convenience, in the rest of this paper, we always assume $\hat{u}_0^*=u_0^*+\sqrt{1-|\xi|^2}$ to be strictly convex. For if it's not,
we can consider the following equation instead
\be\label{ap.1'}
\lt\{
\begin{aligned}
(u^*_r)_t&=-F_*^{-\al}(w^*\gamma^*_{ik}u^*_{kl}\gamma^*_{lj})w^*\,\,&\mbox{in $B_r\times(0, T]$}\\
u^*_r(\cdot, t)&=\hat{u}^*_0-\e[(1+\al)\td{t}]^{\frac{1}{1+\al}}\sqrt{1-r^2}\,\,&\mbox{on $\p B_r\times[0, T]$}\\
u^*_r(\cdot, 0)&=\hat{u}^*_0-\e\sqrt{1-|\xi|^2}=u_0^*\,\,&\mbox{on $B_r\times\{0\},$}
\end{aligned}
\rt.
\ee
where $\e>0$ is a small constant such that $\hat{u}_0^*=u_0^*+\e\sqrt{1-|\xi|^2}$ is strictly convex.
\end{rmk}

\subsection{$C^0$ estimates for $u_r^*$-- solution of \eqref{ap.1}}
\label{c0forur*}
In this subsection, we will establish the uniform upper and lower bounds
for the solution $u_r^*$ of \eqref{ap.1}.

Let's denote $\lu^s=\sqrt{|x|^2+a_0^2}$ and $\uu^s=\sqrt{|x|^2+a_1^2},$ where
$a_0^2< \min \{C_0, 1\},$ $a_1^2> \max\{C_1, 1\},$ and the constants $C_0,$ $C_1$ are the same constants as in \eqref{initial-condition} of \textbf{Conditions A}. We will denote the Legendre transform of $\uu^s,$ $\lu^s$ by $\uu^{s*},$ $\lu^{s*}$ respectively. Applying Lemma 13 of \cite{WX20} we know $\uu^{s*}<u_0^*<\lu^{s*}.$
Moreover, a straightforward calculation yields
\[F^{\al}(\ka[\M_{\uu^s}])=a_1^{-\al}<-\lt<X_{\uu^s}, \nu_{\uu^s}\rt>=a_1,\,\,\mbox{for $a_1>1$},\]
where $X_{\uu^s},\nu_{\uu^s}$ are position vector and normal vector of $\M_{\uu^s}$.
We get  $\uu^{s*}$ satisfies
\[-F^{-\al}_*(w^*\gamma^*_{ik}\uu^{s*}_{kl}\gamma^*_{lj})\sqrt{1-|\xi|^2}>\uu^{s*}.\]
Consider $\td{\uu}^*=[(1+\al)\td{t}]^{\frac{1}{1+\al}}\uu^{s*},$ we have
\be\label{ap.2}
\begin{aligned}
\td{\uu}^*_t&=[(1+\al)\td{t}]^{-\frac{\al}{1+\al}}\uu^{s*}\\
&<-[(1+\al)\td{t}]^{-\frac{\al}{1+\al}}F_*^{-\al}(w^*\gamma^*_{ik}\uu^{s*}_{kl}\gamma^*_{lj})w^*\\
&=-F_*^{-\al}(w^*\gamma^*_{ik}\td{\uu}^*_{kl}\gamma^*_{lj})w^*.
\end{aligned}
\ee
\begin{lemm}
\label{c0fur-lem1*}
Denote $u_b^{r*}=\td{\uu}^*-[(1+\al)\td{t}]^{\frac{1}{1+\al}}\sqrt{1-|\xi|^2},$ then $u_b^{r^*}$ is a subsolution of \eqref{ap.1}
on $\bar{B}_r\times[0, T]$ for any $T>0.$
\end{lemm}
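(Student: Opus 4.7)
The plan is to verify three things for $u_b^{r*}$: the interior differential inequality, the boundary-data inequality, and the initial-data inequality; the only nontrivial step is the interior one, which decouples into a known piece from \eqref{ap.2} and an exact-solution piece.

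First I would set $\bar{\mathbf{z}}^*(\xi,t) := -[(1+\alpha)\tilde{t}]^{\frac{1}{1+\alpha}}\sqrt{1-|\xi|^2}$, so that $u_b^{r*} = \tilde{\bar{u}}^* + \bar{\mathbf{z}}^*$. Because $\bar{\mathbf{z}}^*$ is just a time translation $t \mapsto t + (1+\alpha)^{-1}$ of the explicit Legendre-transformed special solution $\mathbf{z}^*$ from Section \ref{special solution}, and \eqref{ss.2} is autonomous in $t$, $\bar{\mathbf{z}}^*$ satisfies the PDE in \eqref{ap.1} with \emph{equality}:
\[
(\bar{\mathbf{z}}^*)_t = -F_*^{-\alpha}(w^*\gamma^*_{ik}(\bar{\mathbf{z}}^*)_{kl}\gamma^*_{lj}) w^*.
\]
Combined with the strict inequality \eqref{ap.2} for $\tilde{\bar{u}}^*$, this gives
\[
(u_b^{r*})_t \;<\; -w^*\bigl[F_*^{-\alpha}(A) + F_*^{-\alpha}(B)\bigr],
\]
where $A := w^*\gamma^*_{ik}\tilde{\bar{u}}^*_{kl}\gamma^*_{lj}$ and $B := w^*\gamma^*_{ik}\bar{\mathbf{z}}^*_{kl}\gamma^*_{lj}$.

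The key analytic step is to pass from this splitting to a single $F_*^{-\alpha}$ applied to $A+B = w^*\gamma^*_{ik}(u_b^{r*})_{kl}\gamma^*_{lj}$. Since $\tilde{\bar{u}}^*$ and $\bar{\mathbf{z}}^*$ are (time-scaled) Legendre transforms of strictly convex spacelike graphs, the spectra of $A$ and $B$ both lie in $\Gamma_n^+$, and so does the spectrum of $A+B$. The degree-one homogeneity \eqref{condition 4} together with the concavity \eqref{condition 2} implies the super-additivity $F_*(A+B) \geq F_*(A) + F_*(B)$ on this cone; since $\alpha > 0$, applying the decreasing map $t \mapsto t^{-\alpha}$ and then using the trivial bound $(a+b)^{-\alpha} \leq \min\{a^{-\alpha}, b^{-\alpha}\}$ for positive $a,b$ yields the sub-additivity
\[
F_*^{-\alpha}(A+B) \;\leq\; F_*^{-\alpha}(A) + F_*^{-\alpha}(B),
\]
which turns the displayed strict inequality into the desired PDE inequality for a subsolution.

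Finally I would verify the boundary and initial data. On $\partial B_r$, checking the subsolution inequality amounts to showing $[(1+\alpha)\tilde{t}]^{\frac{1}{1+\alpha}}\bar{u}^{s*}(\xi) \leq \hat{u}_0^*(\xi)$ for $|\xi|=r$; since $\bar{u}^{s*}(\xi) = -a_1\sqrt{1-|\xi|^2} < 0$ and $[(1+\alpha)\tilde{t}]^{\frac{1}{1+\alpha}} \geq 1$ for $t \geq 0$, one has $[(1+\alpha)\tilde{t}]^{\frac{1}{1+\alpha}}\bar{u}^{s*} \leq \bar{u}^{s*} < u_0^* \leq \hat{u}_0^*$ (the middle inequality comes from Lemma 13 of \cite{WX20} applied to $u_0 < \bar{u}^s$, which reverses under Legendre transform). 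At $t=0$ the prefactor is exactly $1$, so $u_b^{r*}(\cdot,0) = \bar{u}^{s*} - \sqrt{1-|\xi|^2} \leq \bar{u}^{s*} < u_0^*$.

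The main obstacle is the sub-additivity step: one must verify that the eigenvalues of $A$ and $B$ really do lie in $\Gamma_n^+$ (so that \eqref{condition 2}--\eqref{condition 4} apply) and then chain together homogeneity, concavity, and monotonicity of $t^{-\alpha}$ without losing the strictness inherited from \eqref{ap.2}. Everything else reduces to comparing explicit barriers against $\hat{u}_0^*$ using the sign of $\bar{u}^{s*}$ and the growth of $[(1+\alpha)\tilde{t}]^{\frac{1}{1+\alpha}}$.
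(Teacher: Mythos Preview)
Your argument is correct, but for the interior inequality you take a genuinely longer path than the paper. The key observation you miss is that $\bar{u}^{s*}=-a_1\sqrt{1-|\xi|^2}$ and $\bar{\mathbf{z}}^*=-[(1+\al)\td t]^{1/(1+\al)}\sqrt{1-|\xi|^2}$ are \emph{proportional}, so
\[
u_b^{r*}=\td{\uu}^*+\bar{\mathbf{z}}^*=-(a_1+1)\,[(1+\al)\td t]^{\frac{1}{1+\al}}\sqrt{1-|\xi|^2}
\]
has exactly the same form as $\td{\uu}^*$ with $a_1$ replaced by $a_1+1>1$. The paper's ``earlier discussion'' is simply the computation \eqref{ap.2} rerun with this new constant; no splitting or sub-additivity is needed, because $A$ and $B$ are both scalar multiples of the identity. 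Your super-additivity step $F_*(A+B)\geq F_*(A)+F_*(B)$ (from concavity and degree-one homogeneity) together with the trivial bound $(a+b)^{-\al}\leq a^{-\al}+b^{-\al}$ is valid and would even work if $\td{\uu}^*$ were not a multiple of $\bar{\mathbf{z}}^*$, which is a mild generality gain; but here it is overkill. Your boundary and initial-data checks match the paper's, with the chain $[(1+\al)\td t]^{1/(1+\al)}\bar u^{s*}\leq\bar u^{s*}<u_0^*\leq\hat u_0^*$ spelled out in slightly more detail than the paper gives.
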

\begin{proof}
On $B_r\times\{0\}$ we have
\[u_b^{r*}(\cdot, 0)=\uu^{s*}-\sqrt{1-|\xi|^2}<u_0^*\,\,\mbox{in $B_r$,}\]
where $\uu^{s^*}=-a_1\sqrt{1-|\xi|^2}.$

On $\partial B_r\times[0, \infty),$
\[u_b^{r*}(\cdot, t)=[(1+\al)\td{t}]^{\frac{1}{1+\al}}\uu^{s*}
-[(1+\al)\td{t}]^{\frac{1}{1+\al}}\sqrt{1-r^2}<\hat{u}_0^*-[(1+\al)\td{t}]^{\frac{1}{1+\al}}\sqrt{1-r^2}.\]
Moreover, by earlier discussion it's not hard to see that
\[\frac{\partial u_b^{r*}}{\partial t}+F_*^{-\al}(w^*\gamma^*_{ik}(u_b^{r*})_{kl}\gamma^*_{lj})w^*<0.\]
Therefore, Lemma \ref{c0fur-lem1*} is proved.
\end{proof}
Similarly , we can show
\begin{lemm}
\label{c0fur-lem1}
 Denote $u_s^{r^*}=[(1+\alpha)\td{t}]^{\frac{1}{1+\al}}\lu^{s*},$ then $u_s^{r*}$ is a supersolution of \eqref{ap.1}
 on $\bar{B}_r\times[0, T]$ for any $T>0.$
\end{lemm}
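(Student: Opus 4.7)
The plan is to mirror the sub-solution argument of Lemma~\ref{c0fur-lem1*}, reversing every inequality and dropping the auxiliary correction $[(1+\al)\td{t}]^{\frac{1}{1+\al}}\sqrt{1-|\xi|^2}$, since $\lu^{s*}$ already carries the right sign on $\p B_1$ after the rescaling. The geometric input is that $a_0<1$, which flips the comparison between the speed $F^\al$ and the support value on $\M_{\lu^s}$: exactly parallel to the computation already given for $\uu^s$, one has
\[F^\al(\ka[\M_{\lu^s}])=a_0^{-\al}>-\lt<X_{\lu^s},\nu_{\lu^s}\rt>=a_0,\]
which, translated through the Legendre transform, becomes the dual inequality
\[-F_*^{-\al}(w^*\gas_{ik}\lu^{s*}_{kl}\gas_{lj})\sqrt{1-|\xi|^2}<\lu^{s*}.\]

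I would next check the three supersolution conditions. At $t=0$ the factor $[(1+\al)\td{t}]^{\frac{1}{1+\al}}$ equals $1$, so $u_s^{r*}(\cdot,0)=\lu^{s*}>u_0^*$ by Remark~\ref{rmk1}. On the lateral boundary $\p B_r\times[0,T]$, using $\lu^{s*}=-a_0\sqrt{1-|\xi|^2}$ the required inequality $u_s^{r*}\geq\hat u_0^*-[(1+\al)\td{t}]^{\frac{1}{1+\al}}\sqrt{1-r^2}$ reduces to
\[\hat u_0^*\leq (1-a_0)[(1+\al)\td{t}]^{\frac{1}{1+\al}}\sqrt{1-r^2},\]
which follows from $u_0^*<\lu^{s*}$ (so that $\hat u_0^*=u_0^*+\sqrt{1-|\xi|^2}<(1-a_0)\sqrt{1-|\xi|^2}$ on $\p B_r$) combined with the elementary inequality $[(1+\al)\td{t}]^{\frac{1}{1+\al}}\geq 1$ for $t\geq 0$.

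For the interior inequality, I would exploit the homogeneity of degree one of $f_*$ (assumption \eqref{condition 4}), which makes $F_*^{-\al}$ homogeneous of degree $-\al$ in its matrix argument. A direct substitution then yields
\begin{align*}
-F_*^{-\al}\bigl(w^*\gas_{ik}(u_s^{r*})_{kl}\gas_{lj}\bigr)w^*&=-[(1+\al)\td{t}]^{-\frac{\al}{1+\al}}F_*^{-\al}(w^*\gas_{ik}\lu^{s*}_{kl}\gas_{lj})w^*\\
&<[(1+\al)\td{t}]^{-\frac{\al}{1+\al}}\lu^{s*}=(u_s^{r*})_t,
\end{align*}
using the dual comparison established above. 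Hence $u_s^{r*}$ satisfies the supersolution inequality in $B_r\times(0,T]$. The only real obstacle is a careful sign audit: both $\lu^{s*}$ and $-F_*^{-\al}w^*$ are negative, and every inequality is driven by the single scalar fact $a_0^{-\al}>a_0$ (equivalently $a_0<1$), mirroring the way $a_1>1$ drove the subsolution argument.
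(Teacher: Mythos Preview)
Your proof is correct and follows essentially the same route as the paper: verify the supersolution inequality in the interior via the scalar comparison $a_0^{-\al}>a_0$ (dual to the $a_1$ computation), then check the parabolic boundary data. The only cosmetic difference is on $\partial B_r$: the paper argues sign-wise (the left side $(1-a_0)[(1+\al)\td t]^{\frac{1}{1+\al}}\sqrt{1-r^2}$ is positive while $\hat u_0^*<0$ by strict convexity and its vanishing on $\partial B_1$), whereas you chain the quantitative bound $\hat u_0^*<(1-a_0)\sqrt{1-r^2}$ with $[(1+\al)\td t]^{\frac{1}{1+\al}}\geq 1$; both work. One small citation fix: the inequality $\lu^{s*}>u_0^*$ at $t=0$ is not the content of Remark~\ref{rmk1} but is recorded just before Lemma~\ref{c0fur-lem1*} (via Lemma~13 of \cite{WX20}).
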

\begin{proof}
On $B_r\times\{0\}$ we have
\[u^{r*}_s(\cdot, 0)=\lu^{s*}>u_0^*.\]
On $\partial B_r\times(0, T]$ we want to show
\[[(1+\al)\td{t}]^{\frac{1}{1+\al}}\lu^{s*}>\hat{u}_0^*-[(1+\al)\td{t}]^{\frac{1}{1+\al}}\sqrt{1-r^2}.\]
This is equivalent to show
\[[(1+\al)\td{t}]^{\frac{1}{1+\al}}\lt(-a_0\sqrt{1-r^2}+\sqrt{1-r^2}\rt)>\hat{u}_0^*\]
we can always choose $0<a_0<1$ small such that the $\textbf{l.h.s}>0.$ By our assumption on the initial hypersurface, we have $\hat{u}_0^*$ is a strictly convex function with $\hat{u}_0^*=0$ on $\p B_1,$ this implies
$\textbf{r.h.s}<0.$ We have shown $u^{r*}_s>u_r^*$ on the parabolic boundary. Moreover, it's easy to see that
$u^{r*}_s$ satisfies
\[\lt(u^{r*}_s\rt)_t>-F_*^{-\al}\lt(w^*\gas_{ik}(u^{r*}_s)_{kl}\gas_{lj}\rt)w^*.\] This completes the proof of the Lemma.
\end{proof}

Combining Lemma \ref{c0fur-lem1*} and \ref{c0fur-lem1}, when we apply the maximum principle we obtain that
\begin{lemm}
\label{c0fur*-lem}
Let $u_r^*$ be the solution of equation \eqref{ap.1} on $\bar{B}_r\times [0, T]$ then
\[u_b^{r*}<u_r^*<u_s^{r*}.\]
\end{lemm}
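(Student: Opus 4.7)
The plan is to apply a parabolic comparison principle on the bounded cylinder $\bar B_r \times [0, T]$. Lemmas \ref{c0fur-lem1*} and \ref{c0fur-lem1} have already arranged for $u_b^{r*}$ to be a strict subsolution and $u_s^{r*}$ a strict supersolution of \eqref{ap.1}, each lying strictly on the correct side of $u_r^*$ on the parabolic boundary $\p B_r \times [0, T] \cup \bar B_r \times \{0\}$. It then suffices to propagate these strict inequalities into the interior by a standard maximum-principle contradiction.

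I would carry out the lower bound $u_b^{r*} < u_r^*$ in detail; the upper bound $u_r^* < u_s^{r*}$ is symmetric. Set $\phi := u_b^{r*} - u_r^*$. From the parabolic-boundary comparisons verified in Lemma \ref{c0fur-lem1*}, $\phi < 0$ on $\p B_r \times [0, T] \cup \bar B_r \times \{0\}$. Assume toward contradiction that $\phi$ attains a nonnegative maximum at some interior point $(\xi_0, t_0) \in B_r \times (0, T]$. At that point
\begin{equation*}
\phi_t(\xi_0, t_0) \geq 0, \qquad D\phi(\xi_0, t_0) = 0, \qquad D^2 \phi(\xi_0, t_0) \leq 0.
\end{equation*}
Since $D u_b^{r*} = D u_r^*$ at $(\xi_0, t_0)$, the coefficients $w^* = \sqrt{1 - |\xi_0|^2}$ and $\gas_{ij}$ coincide for the two functions there, so only the Hessians distinguish the two operators. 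By condition \eqref{condition 1}, $f_*$ is strictly increasing in each component on $\Gamma_n^+$, and because $\alpha > 0$, the composition $M \mapsto F_*^{-\alpha}(w^* \gas M \gas)$ is monotone non-increasing on admissible symmetric matrices $M$. With $M_1 := D^2 u_b^{r*}(\xi_0, t_0)$ and $M_2 := D^2 u_r^*(\xi_0, t_0)$, the inequality $M_1 \leq M_2$ yields $F_*^{-\alpha}(w^* \gas M_1 \gas) \geq F_*^{-\alpha}(w^* \gas M_2 \gas)$. Combining this with the strict subsolution inequality produced in Lemma \ref{c0fur-lem1*} and the equation satisfied by $u_r^*$ gives
\begin{equation*}
0 > (u_b^{r*})_t + F_*^{-\alpha}(w^* \gas M_1 \gas)\, w^* \geq \phi_t + (u_r^*)_t + F_*^{-\alpha}(w^* \gas M_2 \gas)\, w^* = \phi_t \geq 0,
\end{equation*}
a contradiction.

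The one point worth a brief check, and what I expect to be the only subtlety, is admissibility of the Hessians at the test point: the monotonicity of $F_*^{-\alpha}$ is valid on $\Gamma_n^+$, so one needs $M_1, M_2 \in \Gamma_n^+$ at $(\xi_0, t_0)$. This is automatic for the smooth strictly convex classical solution $u_r^*$ of \eqref{ap.1}, and for the explicit subsolution $u_b^{r*}$ by construction from the strictly convex profile $\uu^{s*}$ together with $-\sqrt{1-|\xi|^2}$. The factor $w^*(\xi_0) > 0$ is assured by $|\xi_0| < r < 1$, so the operator is genuinely parabolic at the test point and the comparison closes. Applying the mirror argument to $\psi := u_r^* - u_s^{r*}$, using the strict supersolution inequality from Lemma \ref{c0fur-lem1}, yields the upper bound and completes the proof.
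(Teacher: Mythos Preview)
Your proof is correct and follows exactly the paper's approach: the paper simply writes ``Combining Lemma \ref{c0fur-lem1*} and \ref{c0fur-lem1}, when we apply the maximum principle we obtain\ldots'', and you have spelled out that maximum-principle argument in detail. One small inaccuracy: the coefficients $w^*=\sqrt{1-|\xi|^2}$ and $\gas_{ij}=\delta_{ij}-\frac{\xi_i\xi_j}{1+w^*}$ depend only on the base point $\xi$, not on the gradient of the unknown, so the sentence invoking $D\phi(\xi_0,t_0)=0$ to equate these coefficients is unnecessary (though harmless).
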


\subsection{$C^1$ estimates for $u_r^*$-- solution of \eqref{ap.1}}
\label{c1forur*}
In this subsection, we will apply Anmin Li's idea \cite{AML} to construct barrier functions and establish the gradient estimates
for the solution $u_r^*$ of \eqref{ap.1}.
\begin{lemm}
\label{c1forur*-lem1}
Let $\varphi(\xi)$ be a strictly convex, smooth function defined on $B_1,$ we will show for any $\hat{\xi}\in\partial B_r,$ $r\in(0, 1),$
there exists $\ubar{\mathfrak{v}}=b_1\xi_1+\cdots+b_n\xi_n+d$ such that

(1). $\ubar{\mathfrak{v}}(\hat{\xi})=\varphi(\hat{\xi}),$ and

(2). $\ubar{\mathfrak{v}}(\xi)<\varphi(\xi)$ for any $\xi\in\bar{B}_r\setminus\{\hat{\xi}\}.$\\
Here $b_i=b_i(|\varphi|_{C^1})\,\,\mbox{for $1\leq i\leq n,$}$ and $d=d(|\varphi|_{C^1}).$
\end{lemm}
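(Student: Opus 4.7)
The plan is to take $\ubar{\mathfrak{v}}$ to be the tangent hyperplane (affine support function) of $\varphi$ at $\hat{\xi}$. The key observation is that because $r<1$, the point $\hat{\xi}\in\partial B_r$ lies in the interior of $B_1$, where $\varphi$ is smooth; hence $D\varphi(\hat{\xi})$ is well-defined and pointwise bounded by $|\varphi|_{C^1}$. So I would set
\[
\ubar{\mathfrak{v}}(\xi):=\varphi(\hat{\xi})+\langle D\varphi(\hat{\xi}),\xi-\hat{\xi}\rangle,
\]
which is an affine function with $\ubar{\mathfrak{v}}(\hat{\xi})=\varphi(\hat{\xi})$, verifying property (1).

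Next I would read off the coefficients $b_i=\partial_i\varphi(\hat{\xi})$ and $d=\varphi(\hat{\xi})-\langle D\varphi(\hat{\xi}),\hat{\xi}\rangle$. Since $|\hat\xi|=r<1$, these satisfy $|b_i|\leq|\varphi|_{C^1}$ and $|d|\leq|\varphi|_{C^0}+|\varphi|_{C^1}$, giving the claimed dependence on $|\varphi|_{C^1}$ alone and, in particular, independent of $r$ and of the choice of $\hat\xi$.

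For the strict inequality in property (2), I would appeal to Taylor's formula with integral remainder:
\[
\varphi(\xi)-\ubar{\mathfrak{v}}(\xi)=\int_0^1(1-s)\,D^2\varphi\bigl(\hat{\xi}+s(\xi-\hat{\xi})\bigr)(\xi-\hat{\xi},\xi-\hat{\xi})\,ds.
\]
The line segment from $\hat{\xi}$ to any $\xi\in\bar{B}_r$ lies entirely inside the convex set $\bar{B}_r\subset B_1$, so by strict convexity of $\varphi$ the Hessian $D^2\varphi$ is positive definite along this segment. For $\xi\neq\hat{\xi}$ the integrand is pointwise strictly positive, so the integral is positive, yielding $\ubar{\mathfrak{v}}(\xi)<\varphi(\xi)$.

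There is really no substantive obstacle here — this is the classical supporting hyperplane construction for a smooth strictly convex function, and the only ingredient to highlight is the inclusion $\partial B_r\subset B_1$ for $r<1$, which places $\hat\xi$ at a smooth point of $\varphi$ so that the tangent plane is defined and its slope is controlled by $|\varphi|_{C^1}$. The lemma is a preliminary building block for Anmin Li's barrier construction used in the subsequent gradient estimates.
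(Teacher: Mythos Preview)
Your proof is correct and takes essentially the same approach as the paper: both set $\ubar{\mathfrak{v}}$ equal to the tangent hyperplane of $\varphi$ at $\hat{\xi}$, with $b_i=\partial_i\varphi(\hat{\xi})$ and $d=\varphi(\hat{\xi})-\langle D\varphi(\hat{\xi}),\hat{\xi}\rangle$, and then invoke strict convexity for the strict inequality. Your Taylor remainder argument simply spells out what the paper states in one line.
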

\begin{proof} Without loss of generality we assume $\hat{\xi}=(r, 0, \cdots, 0).$ Since
$\varphi$ is strictly convex we obtain
\[\varphi(\xi)>\varphi(r, 0, \cdots, 0)+\varphi_1(r, 0, \cdots, 0)(\xi_1-r)+
\sum\limits_{i=2}^n\varphi_i(r, 0, \cdots, 0)\xi_n\,\,\mbox{for any $\xi\in\bar{B}_r\setminus\{\hat{\xi}\}.$} \]
Let $\ubar{\mathfrak{v}}=\varphi(r, 0, \cdots, 0)+\varphi_1(r, 0, \cdots, 0)(\xi_1-r)+
\sum\limits_{i=2}^n\varphi_i(r, 0, \cdots, 0)\xi_n,$ then we are done. We also note that $b_i$ depends on $|\varphi|_{C^1}$ for $1\leq i\leq n$
and $d$ depends on $|\varphi|_{C^1}.$
\end{proof}

\begin{lemm}
\label{c1forur*-lem2}
Let $\varphi(\xi)$ be a strictly convex, smooth function defined on $B_1,$ we will show for any $\hat{\xi}\in\partial B_r,$ $r\in(0, 1),$
there exists $\bar{\mathfrak{v}}=b_1\xi_1+\cdots+b_n\xi_n+d$ such that

(1). $\bar{\mathfrak{v}}(\hat{\xi})=\varphi(\hat{\xi}),$ and

(2). $\bar{\mathfrak{v}}(\xi)>\varphi(\xi)$ for any $\xi\in\bar{B}_r\setminus\{\hat{\xi}\}.$\\
Here $b_1$ and $d$ depend on $|\varphi|_{C^2},$ while $b_i=b_i(|\varphi|_{C^1})$ for $2\leq i\leq n.$
\end{lemm}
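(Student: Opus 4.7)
The plan is to construct $\bar{\mathfrak v}$ as a supporting hyperplane that is tilted inward along the outer normal to $\partial B_r$ at $\hat\xi$, with tangential slopes matching $\nabla\varphi(\hat\xi)$ and the normal slope shifted by an amount controlled by $|D^2\varphi|_{C^0}$; the tilt absorbs the quadratic Taylor remainder and flips the inequality versus Lemma \ref{c1forur*-lem1}. After a rotation I may assume $\hat\xi=(r,0,\ldots,0)$, and I set $b_i=\varphi_i(\hat\xi)$ for $2\leq i\leq n$, $b_1=\varphi_1(\hat\xi)-M$, and $d=\varphi(\hat\xi)-b_1 r$, where $M>0$ will be chosen below.

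Taylor's formula for $\varphi$ at $\hat\xi$ then gives
\[
\bar{\mathfrak v}(\xi)-\varphi(\xi)=M(r-\xi_1)-R(\xi),\qquad |R(\xi)|\leq\tfrac12|D^2\varphi|_{C^0(\bar B_r)}\,|\xi-\hat\xi|^2.
\]
The key geometric input is that the squared distance from $\xi\in\bar B_r$ to the boundary point $\hat\xi$ is controlled linearly by $r-\xi_1$: since $|\xi|^2\leq r^2$, one has $\sum_{i\geq 2}\xi_i^2\leq r^2-\xi_1^2\leq 2r(r-\xi_1)$, and also $(\xi_1-r)^2\leq 2r(r-\xi_1)$, hence $|\xi-\hat\xi|^2\leq 4r(r-\xi_1)\leq 4(r-\xi_1)$. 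Plugging in and choosing $M>2|D^2\varphi|_{C^0(\bar B_r)}$,
\[
\bar{\mathfrak v}(\xi)-\varphi(\xi)\geq\bigl(M-2|D^2\varphi|_{C^0(\bar B_r)}\bigr)(r-\xi_1)\geq 0,
\]
with strict positivity whenever $\xi_1<r$. Since $\xi\in\bar B_r$ and $\xi_1=r$ force $\xi=\hat\xi$, property (2) follows, while (1) is built into the choice of $d$.

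The advertised dependencies then hold: $b_i$ for $2\leq i\leq n$ is a first derivative of $\varphi$, so depends only on $|\varphi|_{C^1}$, whereas $b_1$ and $d$ inherit dependence on $|D^2\varphi|_{C^0}$ through $M$. The only nonroutine step, and the one that forces the second-derivative dependence absent in Lemma \ref{c1forur*-lem1}, is the quadratic-to-linear boundary estimate $|\xi-\hat\xi|^2\leq 4r(r-\xi_1)$ on $\bar B_r$; this uses only that $\bar B_r$ lies on one side of the supporting hyperplane $\{\xi_1=r\}$ at $\hat\xi$, and I anticipate no real obstacle there.
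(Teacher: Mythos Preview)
Your argument is correct. After the rotation, the identity
$\bar{\mathfrak v}(\xi)-\varphi(\xi)=M(r-\xi_1)-R(\xi)$ together with the ball inequality
$|\xi-\hat\xi|^2\leq 4r(r-\xi_1)$ on $\bar B_r$ gives the claim in one stroke, and the stated
dependencies on $|\varphi|_{C^1}$ and $|\varphi|_{C^2}$ are exactly as advertised.

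This is, however, a genuinely different route from the paper's. The paper follows Li \cite{AML}: it fixes $b_i=\varphi_i(\hat\xi)$ for $i\geq 2$, chooses a large constant $d$, sets $b_1=(\varphi(\hat\xi)-d)/r$, and then analyzes $\hat F(s)=(\bar{\mathfrak v}-\varphi)|_{c(s)}$ along each great circle $c(s)\subset\partial B_r$ through $\hat\xi$, splitting into a near regime $|s|\leq\pi/4$ (where $\hat F''>0$ for $d$ large) and a far regime (where $\hat F>0$ directly). Having established $\bar{\mathfrak v}>\varphi$ on $\partial B_r\setminus\{\hat\xi\}$, the paper then invokes $\triangle(\bar{\mathfrak v}-\varphi)<0$ and the maximum principle to push the inequality into the interior of $B_r$. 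Your approach bypasses both the case split and the maximum principle via the single quadratic-to-linear estimate, which is cleaner here. The trade-off is that the paper's great-circle template is reused verbatim later in Lemma \ref{lc1-lem5-upperbarrier for ur*}, where the comparison function acquires a nonlinear term $-a\sqrt{1-|\xi|^2}$ and the barrier is only needed on the sphere $\mathbb{S}^{n-1}(\td r)$; your Taylor approach would need some adaptation in that setting.
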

\begin{proof}
Following the idea of \cite{AML}, we consider an arbitrary great circle $c(t)$ on $\partial B_r$ passing through $\hat{\xi}.$ Without loss of generality, we assume
$\hat{\xi}=(r, 0, \cdots, 0)$ and the great circle is
\[\xi_1=r\cos s,\,\, \xi_2=r\sin s,\,\, \xi_3=\cdots=\xi_n=0,\,\,-\pi\leq s\leq\pi.\]
Consider $\hat{F}(s)=b_1r\cos s+b_2r\sin s+d-\varphi(r\cos s, r\sin s, 0, \cdots, 0),$ then
\[\hat{F}(0)=b_1r+d-\varphi(r, 0, \cdots, 0).\]
We let $b_1=\frac{\varphi(r, 0, \cdots, 0)-d}{r}$ so that $\hat{F}(0)=0.$ Now differentiating $\hat{F}$ with respect to $s$ we get
\[\frac{d\hat{F}}{ds}=-b_1r\sin s+b_2r\cos s+\varphi_1r\sin s-\varphi_2r\cos s.\] Let $b_2=\varphi_2(r, 0, \cdots, 0)$ then we obtain
$\frac{d}{ds}\hat{F}(0)=0$ and
\be\label{c1f.1}
\frac{d^2\hat{F}}{ds^2}=(d-\varphi(r, 0, \cdots, 0))\cos s-b_2r\sin s+\frac{d^2\varphi}{ds^2}.
\ee
When $s\in[-\frac{\pi}{4}, \frac{\pi}{4}],$ i.e., $\cos s\geq\frac{\sqrt{2}}{2},$ choosing $d>0$  sufficiently large we have
\[\frac{d^2\hat{F}}{ds^2}\geq\frac{\sqrt{2}}{2}(d-\varphi(r, 0, \cdots, 0))-C(|\varphi|_{C^2})>0.\]
When $s\in[-\pi, -\frac{\pi}{4})\cup(\frac{\pi}{4}, \pi]$ and $d>0$ sufficiently large, we have
\be\label{c1f.2}
\begin{aligned}
\hat{F}(s)&=(-d+\varphi(r, 0, \cdots, 0))\cos s+b_2r\sin s+d-\varphi(r\cos s, r\sin s, 0, \cdots, 0)\\
&>d\lt(1-\frac{\sqrt{2}}{2}\rt)-C(|\varphi|_{C^1})>0.
\end{aligned}
\ee
Therefore, we have $\bar{\mathfrak{v}}(\xi)>\varphi(\xi)$ on $\partial B_r\setminus\{\hat{\xi}\}$ and
$\bar{\mathfrak{v}}(\hat{\xi})=\varphi(\hat{\xi}).$
In view of the strict convexity of $\varphi,$ it's easy to see that $\triangle(\bar{\mathfrak{v}}-\varphi)<0$ thus $\bar{\mathfrak{v}}(\xi)>\varphi(\xi)$ in $B_r$.
\end{proof}

\begin{lemm}
\label{c1forur*-lem3}
Let $u_r^*$ be the solution of equation \eqref{ap.1} on $\bar{B}_r\times [0, T],$ then
$|Du_r^*|\leq C(|u_0^*|_{C^2}, t)$ in $\bar{B}_r \times[0, T].$
\end{lemm}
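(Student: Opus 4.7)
The plan is to combine the strict spatial convexity of $u_r^*(\cdot, t)$ with an exact barrier construction at each boundary point $\hat{\xi} \in \p B_r$. First, I would verify that $u_r^*(\cdot, t)$ is strictly convex in $\xi$ throughout $[0, T]$, either by a direct Hessian-evolution argument for this dual flow or by passing through the Legendre transform from the primal flow \eqref{int.1}, which preserves strict convexity. Once convexity is in hand, the function $|Du_r^*(\cdot, t)|$ on the convex domain $\bar{B}_r$ attains its supremum on $\p B_r$, reducing the problem to a gradient estimate on $\p B_r \times [0, T]$. The tangential component of $Du_r^*$ there is immediately bounded by $|u_0^*|_{C^1}$ from the explicit boundary data $\hat{u}_0^*(\xi) - [(1+\al)\td{t}]^{\frac{1}{1+\al}}\sqrt{1-r^2}$, so only the outward normal component requires real work.

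For the normal derivative at $\hat{\xi}$, the crucial observation is that the special solution $\z^*(\xi, s) = -[(1+\al)s]^{\frac{1}{1+\al}}\sqrt{1-|\xi|^2}$ of Section \ref{special solution} solves the PDE in \eqref{ap.1} exactly, and this PDE is invariant under adding a spatially affine function (which contributes zero to both the time derivative and the Hessian). I would therefore apply Lemmas \ref{c1forur*-lem1} and \ref{c1forur*-lem2} with $\varphi = \hat{u}_0^*$ to produce affine functions $\ubar{\mathfrak{v}}, \bar{\mathfrak{v}}$ satisfying $\ubar{\mathfrak{v}} \leq \hat{u}_0^* \leq \bar{\mathfrak{v}}$ on $\bar{B}_r$ with equality at $\hat{\xi}$, their coefficients controlled by $|u_0^*|_{C^1}$ and $|u_0^*|_{C^2}$ respectively, and then set
\[
\ubar{u}^r(\xi, t) := \ubar{\mathfrak{v}}(\xi) + \z^*(\xi, \td{t}), \qquad \bar{u}^r(\xi, t) := \bar{\mathfrak{v}}(\xi) + \z^*(\xi, \td{t}),
\]
both of which are then exact solutions of \eqref{ap.1}. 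On the parabolic boundary of $\bar{B}_r \times [0, T]$, the comparisons $\ubar{u}^r \leq u_r^* \leq \bar{u}^r$ reduce—using the identity $\z^*(\xi, 1/(1+\al)) = -\sqrt{1-|\xi|^2}$ and the explicit form of the boundary data—exactly to the inequalities $\ubar{\mathfrak{v}} \leq \hat{u}_0^* \leq \bar{\mathfrak{v}}$, which hold by construction.

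By the parabolic comparison principle applied to \eqref{ap.1}, the sandwich $\ubar{u}^r \leq u_r^* \leq \bar{u}^r$ extends to all of $\bar{B}_r \times [0, T]$, with equality at $\hat{\xi}$ on the boundary. At that point the spatial gradient of $u_r^*$ is pinched between those of $\ubar{u}^r$ and $\bar{u}^r$, each of which is the explicit sum of the bounded coefficients of the affine barrier and $D\z^*(\hat{\xi}, \td{t})$ (itself controlled by $r$ and $t$). This yields $|Du_r^*(\hat{\xi}, t)| \leq C(|u_0^*|_{C^2}, t)$, and combined with the convexity reduction gives the same bound throughout $\bar{B}_r \times [0, T]$. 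The main obstacle is the $|u_0^*|_{C^2}$ dependence of the normal-direction coefficient $b_1$ in Lemma \ref{c1forur*-lem2}, which forces the final estimate to depend on the $C^2$ norm of the initial data rather than just the $C^1$ norm; the cleanness of the rest of the argument relies on using the exact solution $\z^*$ as the time-dependent corrector, so that the comparison with $u_r^*$ is lossless instead of requiring a lossy strict sub- or supersolution perturbation.
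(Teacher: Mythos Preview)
Your proposal is correct and essentially identical to the paper's proof: the paper also applies Lemmas \ref{c1forur*-lem1} and \ref{c1forur*-lem2} with $\varphi=\hat{u}_0^*$ and uses the exact solutions $\ubar{\mathfrak{v}}(\xi)-[(1+\al)\td{t}]^{1/(1+\al)}\sqrt{1-|\xi|^2}$ and $\bar{\mathfrak{v}}(\xi)-[(1+\al)\td{t}]^{1/(1+\al)}\sqrt{1-|\xi|^2}$ as barriers, obtaining equality at $\hat{\xi}\times[0,T]$ on the parabolic boundary and then the gradient bound at $\hat{\xi}$ by comparison. The only cosmetic difference is that the paper states in one line that it suffices to bound $|Du_r^*|$ on $\partial B_r$, whereas you spell out the convexity reduction explicitly.
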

\begin{proof}
We only need to show $|Du^*_r|$ is bounded on $\partial B_r.$ For any $\hat{\xi}\in \partial B_r,$ let
\[\lu^*_r=-[(1+\al)\td{t}]^{\frac{1}{1+\al}}\sqrt{1-|\xi|^2}+\ubar{\mathfrak{v}}\]
and \[\uu^*_r=-[(1+\al)\td{t}]^{\frac{1}{1+\al}}\sqrt{1-|\xi|^2}+\bar{\mathfrak{v}}.\]
Here $\ubar{\mathfrak{v}}$ and $\bar{\mathfrak{v}}$ are linear functions constructed in Lemma \ref{c1forur*-lem1} and Lemma \ref{c1forur*-lem2}
by letting $\varphi(\xi)=\hat{u}_0^*(\xi).$
Notice that $\lu^*_r,$ $\uu^*_r,$ and $u_r^*$ all satisfies the flow equation \eqref{ap.1}. Moreover, at the parabolic boundary
$\lt(B_r\times\{0\}\rt)\bigcup\lt(\partial B_r\times(0, T]\rt)$ we have
$\lu^*_r\leq u^*_r\leq\uu^*_r$ with equalities hold at $\hat{\xi}\times[0, T].$ By the maximum principle we obtain
\[\lu^*_r<u^*_r<\uu^*_r\,\,\mbox{in $B_r\times(0, T)$}.\]
Therefore,
\[|Du^*_r(\hat{\xi}, t)|<\max\lt\{|D\lu^*_r(\hat{\xi}, t)|, |D\uu_r^*(\hat{\xi}, t)|\rt\}.\]
this completes the proof of Lemma \ref{c1forur*-lem3}.
\end{proof}

Let $\mathbb{S}^{n-1}(r)=\{\xi\in\R^n\mid\sum\xi_i^2=r^2\},$ and we denote the angular derivative
$\xi_k\frac{\partial}{\partial\xi_l}-\xi_l\frac{\partial}{\partial\xi_k}$ on $\mathbb{S}^{n-1}(r)$
by $\partial_{k,l}$ or simply by $\partial$ when no confusion arises.
\begin{lemm}
\label{c1forur*-lem4} Let $u_0^*$ be the Legendre transform of the initial hypersurface $u_0,$ then
$|\partial u_0^*|$ is bounded by a constant depending on $|u_0^*|_{C^3}$.
\end{lemm}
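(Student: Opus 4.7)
The starting point is Remark~\ref{rmk1}: $u_0^*\equiv 0$ on $\partial B_1$. Because the angular vector field $\xi_k\partial_l - \xi_l\partial_k$ is tangent to every sphere centered at the origin, in particular to $\partial B_1$, we get $\partial_{k,l}u_0^*\equiv 0$ on $\partial B_1$ for free. On any ball $\bar B_\rho$ with $\rho<1$ the naive estimate $|\partial_{k,l}u_0^*|\le 2|\xi|\cdot|Du_0^*|$ controls $|\partial u_0^*|$ by the interior $C^1$ norm of $u_0^*$; the real content of the lemma is a bound uniform up to $\partial B_1$, where $u_0^*$ fails to be $C^1$.

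My plan is to peel off the worst radial singularity of $u_0^*$ by using the fact that $\partial_{k,l}$ annihilates any radial function. By Remark~\ref{rmk1} and Condition~(4), $u_0^*$ is sandwiched on $B_1$ between $-\sqrt{C_1}\sqrt{1-|\xi|^2}$ and $-\sqrt{C_0}\sqrt{1-|\xi|^2}$, so the quotient $\Phi(\xi):=-u_0^*(\xi)/\sqrt{1-|\xi|^2}$ takes values in $[\sqrt{C_0},\sqrt{C_1}]$ and satisfies $u_0^* = -\Phi\sqrt{1-|\xi|^2}$. Since $\partial_{k,l}\sqrt{1-|\xi|^2}\equiv 0$, the Leibniz rule yields the clean identity
\[
\partial_{k,l}u_0^*(\xi) = -\sqrt{1-|\xi|^2}\,\partial_{k,l}\Phi(\xi),
\]
and therefore
\[
|\partial_{k,l}u_0^*(\xi)|\le\sqrt{1-|\xi|^2}\cdot|\partial_{k,l}\Phi(\xi)|\le 2|\xi|\cdot|D\Phi(\xi)|.
\]

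The main obstacle is to show that $D\Phi$ is uniformly bounded on $B_1$ by a constant depending on $|u_0^*|_{C^3}$. This is a boundary-regularity statement: the formally singular quotient $\Phi = -u_0^*/\sqrt{1-|\xi|^2}$ must in fact extend as a $C^1$ function to $\bar B_1$, reflecting a square-root-conformal smoothness of $u_0^*$ at $\partial B_1$. Using the Legendre correspondence $\xi = Du_0(x)$, $x = Du_0^*(\xi)$, a direct calculation identifies $\Phi(\xi)$ with the Minkowski support function $v=\langle X,\nu\rangle$ of $\M_0$ pulled back to $B_1$; the smoothness and $C^1$ bounds of $v$ up to $\partial B_1$ follow from the asymptotic condition $u_0(x)\to|x|$ of Condition~(4) combined with the bounded-curvature and derivative bounds of Condition~A(3), via the explicit Legendre-transform formulas. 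Tracing the constants yields $\|D\Phi\|_{L^\infty(\bar B_1)}\le C(|u_0^*|_{C^3})$, and combining with the interior estimate on $\bar B_\rho$ completes the proof.
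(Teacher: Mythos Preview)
Your identity $\partial u_0^*=-\sqrt{1-|\xi|^2}\,\partial\Phi$ is correct, but the step where you discard the factor $\sqrt{1-|\xi|^2}$ and then assert that $\Phi$ extends as a $C^1$ function to $\bar B_1$ is a genuine gap: under \textbf{Conditions A} the Euclidean gradient $D\Phi$ is \emph{not} bounded near $\partial B_1$ in general. A clean counterexample is
\[
u_0^*(\xi)=-\sqrt{1-|\xi|^2}+\e\,(1-|\xi|^2),\qquad 0<\e\ll 1,
\]
for which $\Phi=1-\e\sqrt{1-|\xi|^2}$ and $D_i\Phi=\e\,\xi_i/\sqrt{1-|\xi|^2}\to\infty$ as $|\xi|\to 1$. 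One checks directly that the corresponding $\M_0$ satisfies condition \eqref{initial-condition} (indeed $u_0^2-|x|^2=1-2\e w^*+\e^2(w^*)^4\in[1-2\e,1+\e^2]$) and has principal radii $1-2\e w^*$ and $1-2\e(w^*)^3$, so the curvatures and their covariant derivatives on $\M_0$ are uniformly bounded. Thus your claimed ``$C^1$ bounds of $v$ up to $\partial B_1$'' simply fail, and the hand-wave ``via the explicit Legendre-transform formulas'' cannot be completed. What \emph{is} true (and would suffice) is that the \emph{tangential} part $w^*\,\partial\Phi$ stays bounded --- in the counterexample $D\Phi$ is purely radial so $\partial\Phi=0$ --- but you have not proved this, and doing so is essentially equivalent to the lemma itself.

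The paper proceeds by a completely different route, avoiding the quotient $\Phi$ altogether. It linearizes the Gauss-curvature identity $\det D^2u_0^*=K^{-1}(1-|\xi|^2)^{-(n+2)/2}$: applying the angular derivative $\partial$ and using $\partial|\xi|^2=0$ gives
\[
(u_0^*)^{ij}(\partial u_0^*)_{ij}=-\,\partial\log K,
\]
which is bounded by Condition~A(3). Since $(u_0^*)^{ij}(u_0^*)_{ij}=n$, one obtains $(u_0^*)^{ij}\bigl(\partial u_0^*+C_1u_0^*\bigr)_{ij}\geq 0$; the maximum principle on $B_1$ (both $\partial u_0^*$ and $u_0^*$ vanish on $\partial B_1$) then yields $\partial u_0^*\leq -C_1u_0^*$, and symmetrically a lower bound. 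The argument is short precisely because it never attempts to control the full gradient of $v=u_0^*/w^*$ near the boundary.
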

\begin{proof}
Since $\det(D^2u_0^*)=\frac{1}{K(\xi)}(1-|\xi|^2)^{-\frac{n+2}{2}}$
we have
\[(u_0^*)^{ij}(u_0^*)_{kij}=\frac{\partial}{\partial\xi_k}\lt[\log(1-|\xi|^2)^{-\frac{n+2}{2}}\rt]-\frac{\partial\log K}{\partial\xi_k}.\]
By the assumption that $u_0^*$ satisfies \textbf{Conditions A} we get
\be\label{add1*}(u_0^*)^{ij}(\partial u_0^*)_{ij}=-\partial\log K\geq-nC_1\ee
for some positive $C_1$ depending only on
$|u_0^*|_{C^3}.$ Note that in the above calculation we used $\p |\xi|^2=0.$
The inequality \eqref{add1*} yields
\[(u_0^*)^{ij}(\partial u_0^*+C_1u_0^*)_{ij}\geq 0.\]
Thus, we obtain $\partial u_0^*\leq-C_1u_0^*.$
Similarly, we can show $\partial u_0^*\geq C_2u_0^*.$
\end{proof}

\begin{lemm}
\label{c1forur*-lem5} Let $u_0^*$ be the Legendre transform of the initial hypersurface $u_0,$ then
$\partial^2u_0^*$ is bounded from above by a constant depending on $|u_0^*|_{C^4}$.
\end{lemm}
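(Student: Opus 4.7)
The plan is to mimic the argument of Lemma \ref{c1forur*-lem4}, but applying the angular derivative $\partial$ twice to the Monge--Amp\`ere identity
\[
\log\det D^2 u_0^* = -\log K(\xi) - \frac{n+2}{2}\log(1-|\xi|^2),
\]
and then invoking the maximum principle on $\bar B_1$. Because $\partial$ annihilates every function of $|\xi|^2$, the radial term drops out and $\partial^2\log\det D^2 u_0^* = -\partial^2\log K$; the right-hand side is controlled by $|u_0^*|_{C^4}$ via the Monge--Amp\`ere identity itself.

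Next I would expand the left-hand side. Jacobi's formula together with $\partial (u_0^*)^{ij} = -(u_0^*)^{ia}(u_0^*)^{jb}\partial(u_0^*)_{ab}$ gives
\[
(u_0^*)^{ij}\partial^2(u_0^*)_{ij} - (u_0^*)^{ia}(u_0^*)^{jb}\partial(u_0^*)_{ab}\partial(u_0^*)_{ij} = -\partial^2\log K.
\]
The quadratic term is nonnegative (it equals $|U^{-1/2}(\partial U)U^{-1/2}|^2$ with $U = D^2 u_0^*$), so discarding it yields the crucial one-sided inequality
\[
(u_0^*)^{ij}\partial^2(u_0^*)_{ij} \geq -\partial^2\log K \geq -C(|u_0^*|_{C^4}).
\]
This positivity is what makes a one-sided bound available without any further a priori estimate on higher derivatives.

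To convert $\partial^2(u_0^*)_{ij}$ into $(\partial^2 u_0^*)_{ij}$ I invoke the commutator $[\partial,\partial_i] = -\delta_{ki}\partial_l + \delta_{li}\partial_k$ twice. The correction terms are linear in third derivatives of $u_0^*$, and after contraction against $(u_0^*)^{ij}$ the algebraic cancellation already exploited in Lemma \ref{c1forur*-lem4} applies once more, leaving only a remainder bounded by $|u_0^*|_{C^3}$. Combining gives
\[
(u_0^*)^{ij}(\partial^2 u_0^*)_{ij} \geq -\td C
\]
with $\td C = \td C(|u_0^*|_{C^4})$. Since $(u_0^*)^{ij}(u_0^*)_{ij} = n$, choosing $M \geq \td C/n$ makes $\Phi := \partial^2 u_0^* + M u_0^*$ satisfy $(u_0^*)^{ij}\Phi_{ij} \geq 0$ in $B_1$. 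On $\partial B_1$, $u_0^*\equiv 0$ and $\partial$ is tangential to the sphere, so $\Phi\equiv 0$ on $\partial B_1$, and the maximum principle delivers $\partial^2 u_0^* \leq -M u_0^* \leq M\|u_0^*\|_{L^\infty(B_1)}$.

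The main obstacle is the careful bookkeeping of the commutator corrections at second order: one must verify that the terms surviving the contraction with $(u_0^*)^{ij}$ depend only on $|u_0^*|_{C^3}$ and not on the second angular derivative one is trying to bound, so that the estimate is genuinely a priori and not circular. A secondary but routine point is translating the bound on $\partial^2\log K$ into a bound in terms of $|u_0^*|_{C^4}$, which is obtained by differentiating the Monge--Amp\`ere identity twice.
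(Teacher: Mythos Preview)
Your proposal is correct and follows essentially the same route as the paper. The paper's own proof is a three-line sketch that differentiates the identity $(u_0^*)^{ij}(\partial u_0^*)_{ij}=-\partial\log K$ once more, discards the quadratic term by sign, and then cites Lemma~5.2 of \cite{AML} for the commutator bookkeeping to arrive at $(u_0^*)^{ij}(\partial^2 u_0^*)_{ij}\geq -nC_3$, concluding $\partial^2 u_0^*\leq -C_3 u_0^*$ by the maximum principle; you have simply unpacked that citation, and your identification of the commutator step as the only delicate point matches what \cite{AML} actually does.
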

\begin{proof}
Differentiating $(u_0^*)^{ij}(\partial u_0^*)_{ij}=-\partial\log K$ we get
\[(u_0^*)^{ij}\partial[(\partial u_0^*)_{ij}]+\partial(u^{*ij}_0)(\partial u_0^*)_{ij}=\partial^2\log K.\]
Following the proof of Lemma 5.2 of \cite{AML} we obtain,
\[(u_0^*)^{ij}[(\partial^2 u^*_0)_{ij}]\geq -nC_3,\]
for some positive $C_3$ depending only on
$|u_0^*|_{C^4}.$
This implies
\[\partial^2u_0^*\leq -C_3u_0^*.\]
\end{proof}

\subsection{Upper and lower bounds for $F_*$}In this subsection we will show that along the flow, $F_*$ is bounded from above and below.

We take the hyperplane $\mathbb{P}:=\{X=(x_1, \cdots, x_{n}, x_{n+1}) |\, x_{n+1}=1\}$ and consider the projection of
$\mathbb{H}^n(-1)$ from the origin into $\mathbb{P}.$ Then $\mathbb{H}^n(-1)$ is mapped in
a one-to-one fashion onto an open unit ball $B_1:=\{\xi\in\R^n |\, \sum\xi^2_k<1\}.$ The map
$P$ is given by
\[P: \mathbb{H}^n(-1)\rightarrow B_1;\,\,(x_1, \cdots, x_{n+1})\mapsto (\xi_1, \cdots, \xi_n),\]
where $x_{n+1}=\sqrt{1+x_1^2+\cdots+x_n^2},$ $\xi_i=\frac{x_i}{x_{n+1}}.$
Recall that when $u^{*}_r$ satisfies \eqref{ap.1}, let $v_r=\frac{u^{*}_r}{w^*}$ then $v_r$ satisfies
\be\label{lc1.3}
\left\{
\begin{aligned}
(v_r)_t&=-\td{F}^{-1}(\Lambda_{ij}):=\td{G}(\Lambda_{ij})\,\,&\mbox{in $P^{-1}(B_r)\times(0, T]:=U_r\times(0, T]$},\\
v_r&=\frac{\hat{u}_0^*}{\sqrt{1-r^2}}-[(1+\al)\td{t}]^{\frac{1}{1+\al}}\,\,&\mbox{on $\partial U_r\times[0, T]$},\\
v_r(\cdot, 0)&=\frac{\hat{u}_0^*}{\sqrt{1-|\xi|^2}}-1:=v_0\,\,&\mbox{on $U_r\times\{0\}.$}
\end{aligned}
\right.
\ee
Here, $\td{F}=F_*^{\al},$ $\Lambda_{ij}=\bar{\nabla}_{ij}v_r-v_r\delta_{ij},$ and $\bar{\nabla}_{ij}v$  denote covariant derivatives with respect to the hyperbolic metric. In the following, when there is no confusion, we will drop the subscription on $v_r$. Moreover, we also write $v_{ij}:=\bar{\nabla}_{ij}v.$

\begin{lemm}
\label{lc1-lem4-tG lower bound} Assume \eqref{lc1.3} is satisfied by $v$ on $\bar{U}_r\times[0, T],$ then
there exists $c_0>0$ depending only on the initial hypersurface $\M_0$ such that $\td{F}>\frac{1}{c_0}$ in $\bar{U}_r\times[0, T].$
\end{lemm}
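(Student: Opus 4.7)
The plan is to establish the equivalent estimate $v_t \geq -c_0$ --- since $v_t = \td G = -\td F^{-1}$ on the flow --- by applying the parabolic maximum principle to $Q := v_t$ on $\bar U_r \times [0, T]$.

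Differentiating \eqref{lc1.3} in $t$ and using that the hyperbolic metric is $t$-independent, so $(\Lambda_{ij})_t = Q_{ij} - Q\delta_{ij}$ in an orthonormal frame, one finds that $Q$ satisfies the linear parabolic equation
\begin{equation*}
Q_t - \td G^{ij} Q_{ij} + \Bigl(\sum_i \td G^{ii}\Bigr) Q = 0,
\end{equation*}
where $\td G^{ij} = \td F^{-2} \td F^{ij}$ is positive definite and $\sum_i \td G^{ii} > 0$ thanks to \eqref{condition 1}. On the parabolic boundary one reads off explicit bounds: differentiating the lateral Dirichlet datum gives $Q|_{\partial U_r \times [0, T]} = -[(1+\al)\td t]^{-\al/(1+\al)}$, which lies in $[-1, 0)$ because $(1+\al)\td t \geq 1$; on $U_r \times \{0\}$, $Q(\cdot, 0) = -\td F^{-1}(\Lambda[v_0])$, and by Conditions A together with the monotonicity, positivity and homogeneity of $f_*$ (conditions \eqref{condition 1}, \eqref{condition 3}, \eqref{condition 4}) there is a constant $c > 0$ depending only on $\M_0$ with $\td F(\Lambda[v_0]) \geq c$, so $Q(\cdot, 0) \geq -1/c$. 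Now suppose $Q$ attained its minimum at an interior point $(\xi_0, t_0) \in U_r \times (0, T]$: there $Q_t \leq 0$ and $(Q_{ij}) \geq 0$, so $\td G^{ij} Q_{ij} \geq 0$, and the equation rearranges to $\bigl(\sum_i \td G^{ii}\bigr)Q(\xi_0, t_0) \geq 0$, forcing $Q(\xi_0, t_0) \geq 0$. This contradicts $Q = -\td F^{-1} < 0$, so the minimum of $Q$ is taken on the parabolic boundary, giving $Q \geq -c_0$ with $c_0 = \max\{1, 1/c\}$ and hence $\td F \geq 1/c_0$.

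The principal obstacle is structural rather than technical: the signs of $Q$, the zero-th order coefficient $\sum_i \td G^{ii}$, and the diffusion matrix $\td G^{ij}$ must cooperate for an interior extremum to be ruled out. They do here because $\td G = -\td F^{-1}$ inherits the strict monotonicity of $\td F$ in $\Lambda$ from $f_*^i > 0$, while $Q < 0$ identically along the flow. A subsidiary point, and the place where the hypotheses enter with full force, is the uniform positive lower bound for $\td F|_{t=0}$ independent of $r$, which requires both the uniform curvature bound and the strict convexity in Conditions A in order to keep the eigenvalues of $\Lambda[v_0]$ bounded away from $\partial \Gamma_n^+$ uniformly on all of $\mathbb{H}^n$.
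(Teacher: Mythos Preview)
Your proof is correct and follows essentially the same approach as the paper: both derive the evolution equation $\td G_t - \td G^{ij}\bar\nabla_{ij}\td G + \td G\sum_i\td G^{ii} = 0$ for $\td G = v_t$ and apply the parabolic minimum principle to push the minimum to the parabolic boundary, where the lateral datum gives $\td F \geq 1$ and the initial datum is controlled by Conditions~A. Your version is simply more explicit about the interior-extremum argument and the $t=0$ bound, which the paper leaves implicit.
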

\begin{proof}
From \eqref{lc1.3} and $\tilde{G}=-\tilde{F}^{-1}$, we derive
\be\label{lc1.4}
\begin{aligned}
\td{G}_t&=\td{G}^{ij}(\dot{v}_{ij}-\dot{v}\delta_{ij})\\
&=\td{G}^{ij}\bar{\nabla}_{ij}\td{G}-\td{G}\sum\td{G}^{ii}.
\end{aligned}
\ee
By the maximum principle we know that $\td{G}$ achieves its minimum at the parabolic boundary
$\lt(\p U_r\times\rt.$\\$\lt.(0, T]\rt)\cup\lt(\bar{U}_r\times\{0\}\rt).$
Since on $\partial U_r\times(0, T]$ we have
\[\td{F}=[(1+\al)\td{t}]^{\frac{\al}{1+\al}}\geq 1,\]
the Lemma is proved.
\end{proof}

\begin{lemm}
\label{lc1-lem7-upperbound of tF}
Assume \eqref{lc1.3} is satisfied by $v$ on $\bar{U}_r\times[0, T],$ then
there exists $c_1(t)>0$ depending only on the initial hypersurface $\M_0,$ $T,$ and time $t$  such that $\td{F}(\cdot, t)<\frac{1}{c_1(t)}$ in $\bar{U}_r\times[0, T].$
\end{lemm}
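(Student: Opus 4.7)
The plan is to apply the maximum principle to the auxiliary quantity
\[
\Psi(\xi,t) := -\tilde F(\xi,t)\,v(\xi,t),
\]
motivated by the fact that on the self-expander $\z^*$ one has $\Psi_{\z^*} = (1+\alpha)\tilde t$, a function linear in $t$. The reason this particular combination works is that the ``wrong-sign'' reaction term $-\tilde G\sum\tilde G^{ii}$ in \eqref{lc1.4}, which prevents a direct maximum-principle argument on $\tilde G$, is absorbed after multiplying by $-v>0$.

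First I will derive the evolution equation for $\Psi$. Differentiating $\Psi = -\tilde F v$ and using $v_t = -\tilde F^{-1}$ gives $\Psi_t = -v\tilde F_t + 1$. Combined with the parabolic equation for $\tilde F$ (obtained from \eqref{lc1.4} via $\tilde F = -\tilde G^{-1}$), Euler's identity $\sum\tilde G^{ij}\Lambda_{ij} = \alpha/\tilde F$ coming from the degree-$\alpha$ homogeneity of $\tilde F$, and $\tilde G^{ij}\bar\nabla_{ij}v = \tilde G^{ij}\Lambda_{ij} + v\sum\tilde G^{ii}$, one computes
\[
\Psi_t - \tilde G^{ij}\bar\nabla_{ij}\Psi = (1+\alpha) + 2\,\tilde G^{ij}\bar\nabla_i\tilde F\bigl(v\tilde F^{-1}\bar\nabla_j\tilde F + \bar\nabla_jv\bigr).
\]
At an interior spatial maximum of $\Psi$, the critical-point condition $\bar\nabla_i\Psi=0$ rewrites as $v\bar\nabla_i\tilde F + \tilde F\bar\nabla_iv = 0$; hence the bracket on the right vanishes and $\Psi_t\leq\tilde G^{ij}\bar\nabla_{ij}\Psi+(1+\alpha)\leq 1+\alpha$, using the positive semi-definiteness of $\tilde G^{ij}$ and the sign of the Hessian of $\Psi$ at a maximum.

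Next I analyse the parabolic boundary. On $\partial U_r\times[0,T]$ the explicit Dirichlet data force $\tilde F|_{\partial U_r} = [(1+\alpha)\tilde t]^{\alpha/(1+\alpha)}$, matching the self-expander value, and the sandwich estimate of Remark \ref{rmk1} gives $-v|_{\partial U_r}$ bounded uniformly in $r$ by an explicit function of $t$, so $\Psi|_{\partial U_r}$ is controlled by an $r$-independent function of $t$. On $U_r\times\{0\}$, the uniformly bounded curvatures of $\M_0$ from \textbf{Conditions A}(3) together with the $C^0$ bound on $v_0$ give $\Psi(\cdot,0)\leq C(\M_0)$. An ODE comparison using the interior-maximum inequality above then yields
\[
\max_{\bar U_r}\Psi(\cdot,t)\leq C(\M_0,T) + (1+\alpha)\,t.
\]

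Finally, dividing through by the $r$-independent lower bound $-v(\xi,t)\geq a_0\,[(1+\alpha)\tilde t]^{1/(1+\alpha)}>0$ from Lemma \ref{c0fur*-lem} converts the bound on $\Psi = \tilde F\cdot(-v)$ into
\[
\tilde F(\xi,t)\leq \frac{C(\M_0,T)+(1+\alpha)t}{a_0\,[(1+\alpha)\tilde t]^{1/(1+\alpha)}} =: \frac{1}{c_1(t)},
\]
with $c_1(t)>0$ depending only on $\M_0$, $T$, and $t$. The main obstacle is the gradient cross-term cancellation at a spatial maximum of $\Psi$, which relies crucially on Euler's identity for the degree-$\alpha$ homogeneous $\tilde F$ together with the specific product structure of $\Psi$ dictated by the scaling of the self-expander $\z^*$; without this cancellation the evolution of $\max\Psi$ would contain an uncontrolled $\sum\tilde G^{ii}$ term that cannot be bounded a priori at this stage of the argument.
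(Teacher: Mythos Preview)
Your proof is correct and follows essentially the same strategy as the paper. The paper works with the quotient $\varphi=\hat G/\hat v=\td F^{-1}/(-v)$ and shows that at an interior minimum $\mathcal L\varphi=-(1+\al)\varphi^2$, then uses the multiplier $e^{(1+\al)t}$ to push the minimum to the parabolic boundary; your test function $\Psi=-\td F v=1/\varphi$ is simply the reciprocal, so a maximum of $\Psi$ is a minimum of $\varphi$, and the two differential inequalities are algebraically equivalent. The only cosmetic difference is that your product formulation produces the linear inequality $\Psi_t\leq 1+\al$ at an interior spatial maximum (directly integrable), whereas the paper's quotient gives the Riccati-type inequality $\varphi_t\geq-(1+\al)\varphi^2$, which requires the extra exponential weight; both rely on the same cancellation of the $\sum\td G^{ii}$ reaction term coming from Euler's identity for the degree-$\al$ homogeneous $\td F$, and both reduce the estimate to the same parabolic-boundary data (the explicit value $\td F=[(1+\al)\td t]^{\al/(1+\al)}$ on $\partial U_r$, the $C^0$ bounds on $v$ from Lemma~\ref{c0fur*-lem}, and the curvature bound on $\M_0$ at $t=0$).
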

\begin{proof}
Let $\hat{G}=\td{F}^{-1}=-\td{G}$ and $\mathcal{L}:=\frac{\p}{\p t}-\td{G}^{ij}\bar{\nabla}_{ij}.$ Then we have
\[\mathcal{L}\hat{G}=\frac{\p}{\p t}\hat{G}-\td{G}^{ij}\bar{\nabla}_{ij}\hat{G}=-\hat{G}\sum \td{G}^{ii}.\]
Denote $\hat{v}=-v,$ and by our assumption on the initial data we know there exist $a_1>a_0>0$ such that
\[-a_1\sqrt{1-|\xi|^2}<u_0^*<-a_0\sqrt{1-|\xi|^2},\]
which yields $\max\hat{v}(\cdot, 0)<a_1.$
Moreover, by the Subsection \ref{c0forur*} we get
\[-a_1[(1+\al)\td{t}]^{\frac{1}{1+\al}}\sqrt{1-|\xi|^2}-[(1+\al)\td{t}]^{\frac{1}{1+\al}}\sqrt{1-|\xi|^2}
<u^*_r<-a_0[(1+\al)\td{t}]^{\frac{1}{1+\al}}\sqrt{1-|\xi|^2}.\]
This implies
\[a_0[(1+\al)\td{t}]^{\frac{1}{1+\al}}<\hat{v}(\cdot, t)<a_1[(1+\al)\td{t}]^{\frac{1}{1+\al}}+[(1+\al)\td{t}]^{\frac{1}{1+\al}}\]
and
\[a_0+[(1+\al)\td{t}]^{\frac{1}{1+\al}}-1<\hat{v}(\p U_r, t)<a_1+[(1+\al)\td{t}]^{\frac{1}{1+\al}}-1.\]
 In view of \eqref{lc1.3} we obtain
\[\mathcal{L}\hat{v}=(1+\al)\hat{G}-\hat{v}\sum \td{G}^{ii}.\]
Now consider the test function $\varphi=\frac{\hat{G}}{\hat{v}}.$
If $\varphi$ achieves its minimum at an interior point, then at this point we obtain
\be\label{lc1.11}
\begin{aligned}
\mathcal{L}\varphi&=\frac{\mathcal{L}\hat{G}}{\hat{v}}-\frac{\hat{G}}{\hat{v}^2}\mathcal{L}\hat{v}\\
&=-\varphi\sum\td{G}^{ii}-\frac{\hat{G}}{\hat{v}^2}\lt((1+\al)\hat{G}-\hat{v}\sum\td{G}^{ii}\rt)\\
&=-(1+\al)\varphi^2.
\end{aligned}
\ee
Without loss of generality let's assume $\varphi_{\min}\leq 1,$ then consider
$\td{\varphi}=e^{(1+\al)t}\varphi.$ We know that $\td{\varphi}$ doesn't achieve interior minimum that is less than or equal to $1.$
Let $c=\min F_*^{-\al}(\cdot, 0),$ then
\[e^{(1+\al)t}\varphi\geq\min\limits_{0\leq t\leq T}\lt\{\frac{c}{\max\hat{v}(\cdot, 0)}, \frac{[(1+\al)\td{t}]^{-\frac{\al}{1+\al}}}{\hat{v}(\p U_r, t)}, 1\rt\}.\]
Here, the three values on the right hand side are corresponding to when the minimum is achieved at $t=0,$ when the minimum is achieved at the boundary, and when the minimum is achieved at an interior point respectively.
Therefore, we have $\hat{G}\geq c_1(t)$ for some $c_1(t)$ depends on $\M_0,$ $T,$ and $t$. This completes the proof of Lemma \ref{lc1-lem7-upperbound of tF}.
\end{proof}

\subsection{$C^2$ boundary estimates for $u_r^*$-- solution of \eqref{ap.1}}
\label{c2bforur*}
First, we will need to construct a subsolution to \eqref{ap.1}. Consider
\be\label{subsolution to 3.1}
\lu^*=\hat{u}_0^*-[(1+\al)\td{t}]^{\frac{1}{1+\al}}\sqrt{1-|\xi|^2},
\ee
then $\lus_t=-[(1+\al)\td{t}]^{-\frac{\al}{1+\al}}\sqrt{1-|\xi|^2}.$ Moreover, we have
\be\label{c2bf.0}
\begin{aligned}
&\ka^*[w^*\gas_{ik}\lus_{kl}\gas_{lj}]>[(1+\al)\td{t}]^{\frac{1}{1+\al}}\ka^*\lt[w^*\gas_{ik}(-\sqrt{1-|\xi|^2})_{kl}\gas_{lj}\rt]\\
&=[(1+\al)\td{t}]^{\frac{1}{1+\al}}(1, \cdots, 1).
\end{aligned}
\ee
Here, we have used the convexity of $\hat{u}_0^*.$
Therefore, we get
\[\lus_t+F_*^{-\al}(w^*\ga^*_{ik}\lus_{kl}\ga^*_{lj})w^*<0,\] which implies
$\lus$ is a subsolution of \eqref{ap.1}.

Next, we will show $|D^2u^*|$ is bounded on $\partial B_r\times(0, T].$
For our convenience, we will the second covariant derivatives of $v$ instead, i.e., we will consider the following equation
\be\label{c2bf.1}
\left\{
\begin{aligned}
v_t&=-F_*^{-\al}(\Lambda_{ij})\,\,&\mbox{in $U_r\times(0, T]$},\\
v&=\frac{\hat{u}_0^*}{\sqrt{1-r^2}}-[(1+\al)\td{t}]^{\frac{1}{1+\al}}\,\,&\mbox{on $\partial U_r\times[0, T]$},\\
v(\cdot, 0)&=\frac{\hat{u}_0^*}{\sqrt{1-|\xi|^2}}-1:=v_0\,\,&\mbox{on $U_r\times\{0\}.$}
\end{aligned}
\right.
\ee

\begin{lemm}
\label{c2bf-lem1} Let $v$ be the solution of \eqref{c2bf.1}, then the second tangential derivatives on the boundary satisfy
$|\bar{\nabla}_{\al\beta}v|\leq C$ on $\partial U_r\times(0, T]$ for $\al, \beta<n.$
\end{lemm}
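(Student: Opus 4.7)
The plan is to exploit the fact that on $\partial U_r\times[0,T]$ the function $v$ is prescribed explicitly by \eqref{c2bf.1}, namely
\[
v\big|_{\partial U_r\times[0,T]}=\frac{\hat u_0^*(\xi)}{\sqrt{1-r^2}}-[(1+\al)\td t]^{1/(1+\al)}.
\]
Any purely tangential derivative of $v$ along the boundary is therefore controlled by $|\hat u_0^*|_{C^2}$, and the only real task is to convert tangential differentiation along $\partial U_r$ into the hyperbolic covariant Hessian $\bar{\nabla}_{\alpha\beta}$. This conversion only produces lower-order terms that involve at most $|Dv|$, which is already controlled by Lemma \ref{c1forur*-lem3}.

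Concretely, at a fixed boundary point $\xi_0\in\partial U_r$ I would choose a smooth local orthonormal frame $\{e_1,\ldots,e_n\}$ for the hyperbolic metric, with $e_1,\ldots,e_{n-1}$ tangent to $\partial U_r$ and $e_n$ the outward unit hyperbolic normal. For $\alpha,\beta<n$ one has
\[
\bar{\nabla}_{\alpha\beta} v \;=\; e_\alpha(e_\beta v) \;-\; (\bar{\nabla}_{e_\alpha} e_\beta)\, v.
\]
Decomposing $\bar{\nabla}_{e_\alpha} e_\beta = (\bar{\nabla}_{e_\alpha} e_\beta)^{\top} - h_{\alpha\beta}\, e_n$, where $h_{\alpha\beta}$ is the second fundamental form of $\partial U_r\hookrightarrow \mathbb{H}^n$ and $(\cdot)^{\top}$ denotes its tangential part, this identity becomes
\[
\bar{\nabla}_{\alpha\beta} v \;=\; e_\alpha(e_\beta v) \;-\; (\bar{\nabla}_{e_\alpha} e_\beta)^{\top} v \;+\; h_{\alpha\beta}\, e_n v.
\]
The first two terms on the right depend only on the restriction of $v$ to $\partial U_r$ and are hence bounded in terms of $|\hat u_0^*|_{C^2}$ together with the frame data. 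The third term is a product of $h_{\alpha\beta}$, which is bounded once $r$ is fixed since under the projection $P$ the boundary $\partial U_r$ corresponds to a hyperbolic geodesic sphere, and of the normal derivative $e_n v$, which is controlled by the $C^1$ bound in Lemma \ref{c1forur*-lem3} via the relation $v=u_r^*/w^*$.

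I expect no serious obstacle beyond the routine bookkeeping of the connection coefficients and of $h_{\alpha\beta}$ in the hyperbolic metric. The genuinely harder boundary $C^2$ estimates --- the mixed tangential--normal and the double normal second derivatives --- should require barrier arguments exploiting the subsolution $\lus$ from \eqref{subsolution to 3.1} together with the bounds on $\td F$ from Lemmas \ref{lc1-lem4-tG lower bound} and \ref{lc1-lem7-upperbound of tF}, and are expected to appear in subsequent lemmas rather than here.
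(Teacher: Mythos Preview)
Your proposal is correct and follows essentially the same approach as the paper: both exploit that $v$ is explicitly prescribed on $\partial U_r$, so the tangential part of $\bar\nabla_{\alpha\beta}v$ is determined by the boundary data, while the remaining term is the second fundamental form of $\partial U_r$ times a normal derivative controlled by Lemma~\ref{c1forur*-lem3}. The only cosmetic difference is that the paper introduces the explicit extension $\ubar v=\hat u_0^*/\sqrt{1-|\xi|^2}-[(1+\al)\td t]^{1/(1+\al)}$ and applies the identity $\bar\nabla_{\alpha\beta}(v-\ubar v)=-\bar\nabla_{\tau_n}(v-\ubar v)\,II(\tau_\alpha,\tau_\beta)$ for a function vanishing on the boundary, whereas you decompose $\bar\nabla_{\alpha\beta}v$ directly via the Gauss formula.
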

\begin{proof}
Let $\ubar{v}=\frac{\hat{u}_0^*}{\sqrt{1-|\xi|^2}}-[(1+\al)\td{t}]^{\frac{1}{1+\al}}$ and
$\tau_\al,$ $\tau_\beta$ be some tangential vector fields on $\partial U_r.$ Since $v-\ubar{v}\equiv 0$
on $\partial U_r$ we have
\[\bar{\nabla}_{\al\beta}(v-\ubar{v})=-\bar{\nabla}_{\tau_n}(v-\ubar{v})II(\tau_\al, \tau_\beta)\,\,\mbox{on $\partial U_r$}.\]
Here $\tau_n$ is the interior unit normal vector field to $\partial U_r$ and $II$ denotes the second fundamental form of $\partial U_r.$
Therefore Lemma \ref{c2bf-lem1} follows from Lemma \ref{c1forur*-lem3}
\end{proof}

Next, we will show that $|\bar{\nabla}_{\al n}v|$ is bounded.
In the following, we will denote
\[\mathfrak{L}\phi:=\phi_t-\td{F_v}^{-2}\td{F_v}^{ij}\bar{\nabla}_{ij}\phi+\phi\td{F}_v^{-2}\sum_i\td{F_v}^{ii}\]
for any smooth function $\phi.$ Here $\td{F}_v(\Lambda_{ij})=F_*^\al(\Lambda_{ij}),$
$\td{F}^{ij}_v=\frac{\partial\td{F}_v}{\partial\Lambda_{ij}},$ and $\Lambda_{ij}=\bar{\nabla}_{ij}v-v\delta_{ij}.$
\begin{lemm}
\label{c2bf-auxilary lemma}
Let $v$ be a solution of $\eqref{c2bf.1}$, $\ubar{v}$ be the subsoltion of \eqref{c2bf.1} which is defined in the proof of Lemma \ref{c2bf-lem1}, and   $h=(v-\ubar{v})+A\lt(\frac{1}{\sqrt{1-r^2}}-x_{n+1}\rt)$. Then for any $A_1>0,$ there exists $A>0$ such that
\[\mathfrak{L}h>\frac{A_1}{\td{F}_v^2}\sum\td{F}_v^{ii}.\]
\end{lemm}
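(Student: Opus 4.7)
The plan is to split $\mathfrak{L}h$ into three pieces according to the summands of $h$, compute each separately, and then choose $A$ large enough. For the auxiliary term $A(c - x_{n+1})$ with $c = 1/\sqrt{1-r^2}$ constant: using $(c - x_{n+1})_t = 0$ and the classical identity $\bar{\nabla}_{ij}x_{n+1} = x_{n+1}\delta_{ij}$ on the hyperbolic ball $U_r \subset \mathbb{H}^n$, a direct substitution yields
\begin{equation*}
\mathfrak{L}(c - x_{n+1}) = x_{n+1}\,\td{F}_v^{-2}\sum \td{F}_v^{ii} + (c - x_{n+1})\,\td{F}_v^{-2}\sum \td{F}_v^{ii} = c\,\td{F}_v^{-2}\sum \td{F}_v^{ii}.
\end{equation*}
This is the point of introducing the $x_{n+1}$ correction: it produces a clean positive multiple of $\sum\td{F}_v^{ii}$.

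Next I would compute $\mathfrak{L}(v-\ubar{v})$. For $\mathfrak{L}v$, using $v_t = -\td{F}_v^{-1}$ from \eqref{c2bf.1}, the identity $\bar{\nabla}_{ij}v = \Lambda_{ij} + v\delta_{ij}$, and Euler's relation $\td{F}_v^{ij}\Lambda_{ij} = \al \td{F}_v$ (valid because $\td{F}_v = F_*^{\al}$ is homogeneous of degree $\al$), the $v\sum\td{F}_v^{ii}$ contributions cancel and one obtains $\mathfrak{L}v = -(1+\al)/\td{F}_v$. For $\mathfrak{L}\ubar{v}$, the critical input is \eqref{c2bf.0}: the matrix $\Lambda_{\ubar{v}}$ in the hyperbolic $v$-formulation corresponds to $w^*\gas_{ik}\lus_{kl}\gas_{lj}$, so \eqref{c2bf.0} gives the pointwise lower bound $\Lambda_{\ubar{v}} \geq [(1+\al)\td{t}]^{1/(1+\al)}\,I \geq \delta\,I$ for some $\delta > 0$ uniform on $[0,T]$. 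Combined with positivity of $(\td{F}_v^{ij})$ from \eqref{condition 1}, this yields $\td{F}_v^{ij}(\Lambda_{\ubar{v}})_{ij} \geq \delta\sum\td{F}_v^{ii}$, hence $\mathfrak{L}\ubar{v} \leq \ubar{v}_t - \delta\,\td{F}_v^{-2}\sum\td{F}_v^{ii}$. Since $\ubar{v}_t$ is an explicit bounded function of $t$ and $\td{F}_v$ is bounded above and below by Lemmas~\ref{lc1-lem4-tG lower bound} and \ref{lc1-lem7-upperbound of tF}, I get
\begin{equation*}
\mathfrak{L}(v - \ubar{v}) \geq -C(\M_0, T) + \delta\,\td{F}_v^{-2}\sum \td{F}_v^{ii}.
\end{equation*}

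To conclude I need a positive lower bound for $\td{F}_v^{-2}\sum\td{F}_v^{ii}$. Applying the concavity inequality for $f_*$ at the identity and using the normalization $f_*(1,\ldots,1) = 1$ gives $\sum F_*^{ii} \geq 1$; hence $\sum\td{F}_v^{ii} = \al F_*^{\al-1}\sum F_*^{ii} \geq \al F_*^{\al-1}$, and together with the two-sided bound on $\td{F}_v = F_*^\al$ this produces $\td{F}_v^{-2}\sum\td{F}_v^{ii} \geq m$ for some $m = m(\M_0, T, \al) > 0$. Combining everything,
\begin{equation*}
\mathfrak{L}h \geq -C + (\delta + Ac)\,\td{F}_v^{-2}\sum\td{F}_v^{ii},
\end{equation*}
so any $A$ with $(\delta + Ac - A_1)\,m > C$ achieves $\mathfrak{L}h > A_1\,\td{F}_v^{-2}\sum\td{F}_v^{ii}$.

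The main point requiring care is the transfer of \eqref{c2bf.0} from the matrix $w^*\gas_{ik}\lus_{kl}\gas_{lj}$ (in the $u^*$-formulation) to the matrix $\Lambda_{\ubar{v}}$ (in the $v$-formulation via $\ubar{v} = \lus/w^*$) so that the lower bound $\Lambda_{\ubar v} \geq \delta I$ is uniform in space and time. Once that identification is in place, the rest of the argument is a routine combination of Euler's identity, the hyperboloid Hessian $\bar{\nabla}^2 x_{n+1} = x_{n+1}\,g$, concavity of $f_*$, and the uniform two-sided bounds on $\td{F}_v$ already established.
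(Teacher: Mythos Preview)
Your argument is correct, but it follows a different path from the paper's. Both proofs begin with the same computation for the auxiliary term (your identity $\mathfrak{L}(c-x_{n+1})=c\,\td F_v^{-2}\sum\td F_v^{ii}$ is exactly what lies behind the paper's citation of \cite{RWX}) and the same evaluation $\mathfrak{L}v=-(1+\al)/\td F_v$. The divergence is in how the $\ubar v$ contribution is handled. You use the explicit matrix lower bound $\ubar\Lambda\geq[(1+\al)\td t]^{1/(1+\al)}I$ from \eqref{c2bf.0} to get $\td F_v^{ij}\ubar\Lambda_{ij}\geq\delta\sum\td F_v^{ii}$ directly, and then invoke the a~priori two-sided bounds on $\td F_v$ (Lemmas~\ref{lc1-lem4-tG lower bound} and \ref{lc1-lem7-upperbound of tF}) together with $\sum F_*^{ii}\geq 1$ to bound $\td F_v^{-2}\sum\td F_v^{ii}$ below by a constant $m>0$ and absorb the leftover $-C$. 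The paper instead avoids any upper bound on $\td F_v$: it uses concavity of $\td F^{1/\al}=F_*$ to compare $\td F_v^{ij}\ubar\Lambda_{ij}$ against $\td F(\ubar\Lambda)$, reduces the estimate to the algebraic inequality
\[
\al\td F^{1/\al}(\ubar\Lambda)+A_2\al+\td F^{-1}(\ubar\Lambda)\,\td F_v^{1+1/\al}\geq(1+\al)\td F_v^{1/\al},
\]
and proves it by a case split on whether $\td F_v$ is large or small. Your route is shorter and more elementary given that Lemma~\ref{lc1-lem7-upperbound of tF} is already in hand; the paper's route is more self-contained and would go through even without that lemma. The identification you flag as ``requiring care'' (that $\ubar\Lambda_{ij}$ in the $v$-picture equals $w^*\gas_{ik}\lus_{kl}\gas_{lj}$) is precisely the content of \eqref{star}, so it is indeed available.
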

\begin{proof}
By equation (7.5) in \cite{RWX}, it's straightforward to see that $\mathfrak{L}\lt(\frac{1}{\sqrt{1-r^2}}-x_{n+1}\rt)\geq \td{F_v}^{-2}\td{F_v}^{ii}.$ To prove Lemma \ref{c2bf-auxilary lemma}
we only need to show there exists $A_2>0$ such that
\[\mathfrak{L}(v-\ubar{v})>-\lt(\ubar{v}_t+\td{F}^{-1}(\ubar{\Lambda}_{ij})\rt)-\frac{A_2}{\td{F}_v^2}\sum\td{F}_v^{ii},\]
where $\ubar{\Lambda}_{ij}=\bar{\nabla}_{ij}\ubar{v}-\ubar{v}\delta_{ij}.$
This is equivalent to show
\[v_t-\td{F}^{-2}_v\td{F}^{ii}_v\Lambda_{ii}-\ubar{v}_t+\td{F}^{-2}_v\td{F}^{ii}_v\ubar{\Lambda}_{ii}
+\frac{A_2}{\td{F}^2_v}\sum\td{F}^{ii}_v\geq-\ubar{v}_t-\td{F}^{-1}(\ubar{\Lambda}_{ij}).\]
This implies
\be\label{c2bfadd1}
-\frac{\al+1}{\td{F}_v}+\td{F}^{-2}_v\td{F}^{ii}_v\ubar{\Lambda}_{ii}
+\frac{A_2}{\td{F}^2_v}\sum\td{F}_v^{ii}\geq-\td{F}^{-1}(\ubar{\Lambda}_{ij}).\ee
By condition \eqref{condition 2} we know $\td{F}^{\frac{1}{\al}}$ is concave, which gives
\[\frac{1}{\al}\td{F}_v^{\frac{1}{\al}-1}\td{F}_v^{ij}\ubar{\Lambda}_{ij}\geq\td{F}^{\frac{1}{\al}}(\ubar{\Lambda}_{ij}).\]
Moreover, by condition \eqref{condition 4} we get \[\frac{1}{\al}\td{F}_v^{\frac{1}{\al}-1}\sum\td{F}^{ii}_v\geq 1.\]
Therefore, \[\textbf{l.h.s of \eqref{c2bfadd1}}\geq-\frac{1+\al}{\td{F}_v}+\td{F}^{-2}_v
\lt(\frac{\al\td{F}^{\frac{1}{\al}}(\ubar{\Lambda}_{ij})}{\td{F}^{\frac{1}{\al}-1}_v}\rt)
+\frac{A_2}{\td{F}^2_v}\cdot\frac{\al}{\td{F}_v^{\frac{1}{\al}-1}}.\]
\textbf{Claim}: $\al\td{F}^{\frac{1}{\al}}(\ubar{\Lambda}_{ij})+A_2\al
+\frac{1}{\td{F}(\ubar{\Lambda}_{ij})}\td{F}_v^{1+\frac{1}{\al}}\geq(1+\al)\td{F}_v^{\frac{1}{\al}}.$\\
Proof of the claim: A straightforward calculation yields $C_1<\td{F}(\ubar{\Lambda}_{ij})<C_2(T),$ where $C_1$ only depends on the initial hypersurface
$\M_0$ while $C_2$ also deopends on the time $T.$ When $\td{F}_v>C_3(T):=(\al+1)C_2(T)$ we have
\[\frac{1}{\td{F}(\ubar{\Lambda}_{ij})}\td{F}_v^{1+\frac{1}{\al}}\geq(1+\al)\td{F}_v^{\frac{1}{\al}};\]
when $\td{F}_v\leq C_3(T)$ we can choose $A_2=A_2(C_3, \al)>0$ such that $A_2\al>(1+\al)C_3^{\frac{1}{\al}}.$
Thus the claim holds. This completes the proof of Lemma \ref{c2bf-auxilary lemma}.
\end{proof}

\begin{lemm}Let $v$ be a solution of $\eqref{c2bf.1}$ and suppose $\tau_n$ is the interior unit normal vector field of $\p U_r$. We have
\label{c2bf-lem3}
$|\bar{\nabla}_{\al n}v|\leq C$ on $\partial U_r\times(0, T].$
\end{lemm}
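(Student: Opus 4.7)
The plan is to carry out the classical barrier argument for mixed tangential-normal second derivatives at the boundary, using the auxiliary function $h$ furnished by Lemma \ref{c2bf-auxilary lemma} as the barrier. Set $\tilde v := v - \ubar v$ and let $\partial := \partial_{k,l}$ be the rotational tangent field on $\partial U_r$, which at an arbitrary fixed point $\hat\xi\in\partial U_r$ can be aligned, by choice of $k,l$, with any prescribed tangential direction $\tau_\alpha$. Since $\tilde v \equiv 0$ on the whole parabolic boundary of $U_r\times(0,T]$ and $\partial$ is tangential to $\partial U_r$, we have $\partial \tilde v \equiv 0$ on that parabolic boundary. I would aim to prove a bound of the form $|\partial \tilde v|\leq h$ in $\bar U_r\times[0,T]$; because $h=0$ on $\partial U_r$ as well, this immediately yields $|\bar\nabla_n(\partial \tilde v)(\hat\xi,t)|\leq |\bar\nabla_n h(\hat\xi,t)|\leq C$, and combining with the angular-derivative estimates of Lemmas \ref{c1forur*-lem4}--\ref{c1forur*-lem5} (which handle $\bar\nabla_{\alpha n}\ubar v$ separately through $\partial\hat u_0^*$) gives the desired bound $|\bar\nabla_{\alpha n}v|\leq C$.

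First I would apply $\partial$ to the evolution equation in \eqref{c2bf.1}. Since $\partial$ generates a rotation of $\R^n$ preserving $B_r$, and hence through the projection $P$ an isometry of the hyperbolic region $U_r$, the commutator of $\partial$ with $\bar\nabla_{ij}$ reduces to a first-order combination of $\bar\nabla K$ and $\bar\nabla^2 v$, while the commutator with $v\delta_{ij}$ is simply $(\partial v)\delta_{ij}$. Carrying $\partial$ through the nonlinearity $-\tilde F_v^{-1}$ and rearranging produces an equation of the form $\mathfrak{L}(\partial\tilde v)=\mathcal{E}$, where $\mathcal{E}$ collects $\mathfrak{L}(\partial\ubar v)$ together with the curvature-type error. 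Using the $C^0$ bound of Subsection \ref{c0forur*}, the gradient bound of Lemma \ref{c1forur*-lem3}, the homogeneity relation $\tilde F_v^{ij}\Lambda_{ij}=\alpha\tilde F_v$, the two-sided bound on $\tilde F_v$ supplied by Lemmas \ref{lc1-lem4-tG lower bound} and \ref{lc1-lem7-upperbound of tF}, together with the $\partial\hat u_0^*$ and $\partial^2\hat u_0^*$ estimates of Lemmas \ref{c1forur*-lem4}--\ref{c1forur*-lem5}, I expect to absorb every piece into $|\mathcal{E}|\leq C\tilde F_v^{-2}\sum\tilde F_v^{ii}$ for some $C$ depending only on the initial data and $T$.

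With this in hand, invoke Lemma \ref{c2bf-auxilary lemma} with $A_1=C+1$ to choose $A$ so that $\mathfrak{L}h>(C+1)\tilde F_v^{-2}\sum\tilde F_v^{ii}$, and form the test pair $\Psi^{\pm}:=h\pm \partial\tilde v$. Then $\mathfrak{L}\Psi^{\pm}>0$ in $U_r\times(0,T]$, while $\Psi^{\pm}\geq 0$ on the parabolic boundary since $h\geq 0$ there and $\partial\tilde v\equiv 0$. Because the zeroth-order coefficient of $\mathfrak{L}$ is nonnegative, the parabolic maximum principle forces $\Psi^{\pm}\geq 0$ throughout, i.e.\ $|\partial\tilde v|\leq h$. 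Dividing by the distance to $\partial U_r$ and passing to the limit at $\hat\xi$ delivers the one-sided normal-derivative control that feeds into the final rearrangement.

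The main obstacle is the first step: verifying that every commutator and coefficient term produced by differentiating the fully nonlinear equation along the rotational Killing direction can be majorized by $\tilde F_v^{-2}\sum\tilde F_v^{ii}$. The concavity assumption \eqref{condition 2} together with the homogeneity \eqref{condition 4} are what allow trading $\tilde F_v^{ij}$ for $\sum\tilde F_v^{ii}$ in bounds of the form $|\tilde F_v^{ij}M_{ij}|\leq C\sum\tilde F_v^{ii}$ whenever $M$ is uniformly bounded, and the two-sided control on $\tilde F_v$ is precisely what prevents the $\tilde F_v^{-2}$ prefactor from degenerating on any finite time interval. Once this bookkeeping is done, the remainder of the proof is a routine maximum-principle comparison.
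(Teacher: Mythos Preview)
Your proposal is correct and follows essentially the same path as the paper: both apply an angular (rotational) derivative to $v-\ubar v$, use that $\mathfrak{L}(\partial v)=0$ exactly (the paper cites Lemma~29 of \cite{RWX} for this, so your commutator worries are unnecessary---there is no error coming from $\partial v$ itself, only from $\mathfrak{L}(\partial\ubar v)$), and compare against the barrier $h$ of Lemma~\ref{c2bf-auxilary lemma} via the maximum principle.

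The one noteworthy difference is that the paper localizes to a small half-ball $P^{-1}(\Omega_\e)$ around the boundary point and therefore needs an extra quadratic correction $C\frac{|\td\xi|^2}{w^*}$ to control the comparison function on the artificial inner boundary $\partial B_\e(p)\cap B_r$. You instead run the comparison on the whole of $U_r$, which is legitimate here precisely because the rotational field $\partial$ is tangential to \emph{all} of $\partial B_r$ (so $\partial\tilde v\equiv 0$ on the entire lateral boundary, not just at the chosen point) and because $h$ vanishes on $\partial U_r$ and is nonnegative at $t=0$. This makes your argument slightly cleaner; the paper's localized version is the standard Caffarelli--Nirenberg--Spruck template that would also work on domains without a global tangential Killing field.
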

\begin{proof}
Consider any fixed point $p$ on $\partial B_r,$ we may assume it to be $(0, \cdots, 0, -r)$. Choose a new coordinates
$\{\td{\xi}_1, \cdots, \td{\xi}_n\}$ around $p$ such that $p$ is the origin  and $\td{\xi}_n$ axis is the interior normal to
$\partial B_r$ at $p.$
Denote $T:=\partial_\al+\frac{1}{r}\lt(\td{\xi}_\al\partial_n-\td{\xi}_n\partial_\al\rt),$
where $\partial_i:=\frac{\partial}{\partial\td{\xi}_i}$ for $1\leq i\leq n$ and $\al<n.$
Let $\phi(\cdot, t):= u_r^*(\cdot, t)-\lus(\cdot, t),$ where $u_r^*$ is the solution of \eqref{ap.1} and
$\lus$ is given in \eqref{subsolution to 3.1}. Then at $t=0$ we have
\be\label{c2bf.7}
\phi(\cdot, 0)=u_r^*(\cdot, 0)-\lus(\cdot, 0)=0,\\
\ee
and
\be\label{c2bf.8}
T\phi(\cdot, 0)=0.
\ee
Now let's denote $\Omega_\e:=B_r\cap B_\e(p),$ where $B_{\epsilon}(p)$ is a ball centered $p$ with radius $\epsilon$, then for any $t\in (0, T],$ on $\partial B_r\cap B_\e(p)$ we have
$\phi(\cdot, t)\equiv 0.$ This implies for any $t\in (0, T],$ $|T\phi (\cdot , t)|\leq C|\td{\xi}|^2$ on $\partial B_r\cap B_\e(p) $.
Consider $\Psi:=T\phi-C|\td{\xi}|^2=Tu_r^*-T\lus-C|\td{\xi}|^2.$ In the following we denote
\[\td{G}(w^*\gas_{ik}(u_r^*)_{kl}\gas_{lj})=-F_*^{-\al}(\ka^*[w^*\gas_{ik}(u_r^*)_{kl}\gas_{lj}]).\] Then
\[\frac{\partial u_r^*}{\partial t}-\td{G}(w^*\gas_{ik}(u_r^*)_{kl}\gas_{lj})w^*=0,\] and
$$T=\frac{1}{r}\left(\xi_{\al}\frac{\p}{\p\xi_n}-\xi_n\frac{\p}{\p\xi_{\alpha}}\right)$$ is an angular derivative vector. By Lemma 29 of \cite{RWX}
we get
\be\label{c2bf.9}
\frac{\partial (Tu_r^*)}{\partial t}-\lt(\td{G}^{ij}(w^*\gas_{ik}(Tu_r^*)_{kl}\gas_{lj})\rt)w^*=0,
\ee
where if we denote $a_{ij}=w^*\gas_{ik}(u_r^*)_{kl}\gas_{lj}$ then $\td{G}^{ij}=\frac{\p \td{G}}{\p a_{ij}}.$
Recall that $\lus=\hat{u}_0^*-[(1+\al)\td{t}]^{\frac{1}{1+\al}}\sqrt{1-|\xi|^2},$ then we derive
$T\lus=T\hat{u}_0^*.$
Applying Lemma 15 of \cite{WX20} we know
\be\label{star}
\bar{\nabla}_{ij}\lt(\frac{u}{w^*}\rt)-\frac{u}{w^*}\delta_{ij}=w^*\gas_{ik}u_{kl}\gas_{lj}.
\ee Combining with
\eqref{c2bf.9} we obtain
\be\label{c2bf.10}
(Tv)_t-\td{G}^{ij}\bar{\nabla}_{ij}(Tv)+(Tv)\sum\td{G}^{ii}=0,
\ee
where $Tv=T\lt(\frac{u_r^*}{w^*}\rt)=\frac{Tu_r^*}{w^*},$ and we also note that here $\td{G}^{ij}=\td{F}_v^{-2}\td{F}_v^{ij}.$
Moreover, it's easy to see that $|\mathfrak{L}T\ubar{v}|\leq C_1\sum\td{G}^{ii}.$
Now, since $|\td{\xi}|^2=\sum\limits_{\al<n}\xi_{\al}^2+(\xi_n+r)^2$ we have
\[\lt|\mathfrak{L}\frac{|\td{\xi}|^2}{w^*}\rt|\leq C_2\sum \td{G}^{ii}.\]
Therefore, choosing $C>0$ large the function $\td{\Psi}:=Tv-T\ubar{v}-C\frac{|\td{\xi}|^2}{w^*}$ satisfies
\[\td{\Psi}(\cdot, 0)\leq 0\,\,\mbox{in $P^{-1}(\Omega_{\e})\times\{0\},$}\]
\[\td{\Psi}(\cdot, t)\leq 0\,\,\mbox{on $\partial P^{-1}(\Omega_\e)\times(0, T],$}\]
and $\lt|\mathfrak{L}\td{\Psi}\rt|\leq C_3\sum\td{G}^{ii}.$ When $A>0$ very large we have
\[h\geq \td{\Psi}\,\,\mbox{in $P^{-1}(\Omega_\e)\times\{0\}$ and on $\p P^{-1}(\Omega_{\epsilon})\times(0,T]$},\]
\[\mathfrak{L}(\tilde{\Psi}-h)\leq 0\, \, \mbox{ in $\Omega_{\epsilon}\times(0,T]$}.\]
Therefore,
at any point $(p, t)\in \partial U_r\times(0, T]$ we get
$h_n>\td{\Psi}_n.$ This implies $\bar{\nabla}_{\al n}v(p, t)\leq C_4.$ Similarly, by considering
$T\ubar{v}-Tv-C\frac{|\td{\xi}|^2}{w^*}$ we obtain $\bar{\nabla}_{\al n}v\geq C_5.$
\end{proof}

\begin{lemm}Let $v$ be a solution of $\eqref{c2bf.1}$ and suppose $\tau_n$ is the interior unit normal vector field of $\p U_r$. We have \label{c2bf-lem4}
$|\bar{\nabla}_{nn}v|\leq C$ on $\partial U_r\times(0, T].$
\end{lemm}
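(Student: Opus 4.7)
The plan is to reduce the bound on $\bar{\nabla}_{nn}v$ to a bound on the single entry $\Lambda_{nn}=\bar{\nabla}_{nn}v-v$ by exploiting the tangential estimates already in hand together with the structural assumptions on $f_{*}$. From Lemmas \ref{c2bf-lem1} and \ref{c2bf-lem3}, the tangential block $\Lambda_{\alpha\beta}$ and the mixed entries $\Lambda_{\alpha n}$ are uniformly bounded on $\partial U_{r}\times(0,T]$, and by the $C^{0}$ bound on $v$ it suffices to show $|\Lambda_{nn}|\leq C$. I would write $\Lambda$ in block form about a fixed boundary point and estimate its eigenvalues by Schur complement: provided $\Lambda_{nn}$ is large compared with the bounded quantities $\Lambda_{\alpha\beta}$ and $\Lambda_{\alpha n}$, the matrix $\Lambda$ has $n-1$ eigenvalues $O(1/\Lambda_{nn})$-close to the eigenvalues of the tangential block $\Lambda_{\alpha\beta}$, plus one eigenvalue comparable to $\Lambda_{nn}$.

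Upper bound on $\Lambda_{nn}$. First I would verify that on $\partial U_{r}\times(0,T]$ the tangential eigenvalues of $\Lambda$ lie in a fixed compact set $E\subset\Gamma_{n-1}^{+}$. This follows from the boundary identity $v=\ubar{v}$ on $\partial U_{r}$ used in Lemma \ref{c2bf-lem1}: the tangential Hessian of $v$ on $\partial U_{r}$ differs from that of $\ubar{v}$ only by the correction $-(v-\ubar v)_{n}\,II(\tau_{\al},\tau_{\beta})$, which is bounded by the $C^{1}$ estimate (Lemma \ref{c1forur*-lem3}) and the geometry of $\partial U_{r}$; since $\hat u_{0}^{*}$ is strictly convex (Remark \ref{rmk2}), $\ubar{\Lambda}_{\al\beta}$ is uniformly positive definite, and therefore so is the tangential block of $\Lambda$. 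Together with Step 1 this puts the tangential eigenvalues into a compact subset $E\subset\Gamma_{n-1}^{+}$. If $\Lambda_{nn}$ were to blow up, then by the Schur-complement analysis above the ordered eigenvalues $(\kappa_{1},\dots,\kappa_{n})$ of $\Lambda$ satisfy $(\kappa_{1},\dots,\kappa_{n-1})\in$ a compact enlargement of $E$ and $\kappa_{n}\to\infty$. Condition \eqref{condition 6} then gives $F_{*}=f_{*}(\kappa_{1},\dots,\kappa_{n})\to\infty$, contradicting the upper bound $F_{*}^{\al}=\tilde F_{v}\leq 1/c_{1}(T)$ from Lemma \ref{lc1-lem7-upperbound of tF}. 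Hence $\Lambda_{nn}\leq C$ on $\partial U_{r}\times(0,T]$.

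Lower bound on $\Lambda_{nn}$. The solution $u^{*}_{r}$ inherits strict convexity, so in view of identity \eqref{star} the matrix $\Lambda=w^{*}\gas_{ik}(u^{*}_{r})_{kl}\gas_{lj}$ is positive definite, and in particular $\Lambda_{nn}\geq 0$. Alternatively, if one prefers not to invoke strict convexity directly, one can argue by contradiction: should $\Lambda_{nn}$ approach $0$ at some boundary point, the smallest eigenvalue of $\Lambda$ would approach $\partial\Gamma_{n}^{+}$, forcing $F_{*}\to 0$ and contradicting the lower bound $F_{*}\geq 1/c_{0}^{1/\al}$ from Lemma \ref{lc1-lem4-tG lower bound}. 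Either way, $\Lambda_{nn}\geq -C$, and combined with the $C^{0}$ bound on $v$ this yields $|\bar{\nabla}_{nn}v|=|\Lambda_{nn}+v|\leq C$ on $\partial U_{r}\times(0,T]$.

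The main obstacle I anticipate is the clean verification that the tangential eigenvalues of $\Lambda$ remain in a fixed compact subset of $\Gamma_{n-1}^{+}$, since this requires both a careful sign analysis of the normal-derivative correction in the tangential Gauss-type formula and the uniform strict convexity of $\ubar{v}$; without it condition \eqref{condition 6} cannot be triggered, and the Schur-complement contradiction argument breaks down.
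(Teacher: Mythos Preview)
Your proposal is correct and follows essentially the same approach as the paper. The paper's proof is terser: it diagonalizes the tangential block $(\bar\nabla_{\al\beta}v)$, invokes Lemma~1.2 of \cite{CNS3} (which is precisely your Schur-complement eigenvalue asymptotics) to get $\lambda_\al=v_{\al\al}-v+o(1)$ and $\lambda_n=v_{nn}-v+O(1)$ when $v_{nn}$ is large, then simply says ``since $F_*$ is bounded we know $v_{nn}$ is bounded from above'' and ``$v_{nn}$ is bounded from below comes from the strict convexity of the flow.'' Your write-up is more careful in that you make explicit the use of condition~\eqref{condition 6} and worry about pinning the tangential eigenvalues inside a fixed compact subset of the positive cone, a step the paper leaves to the reader; but the underlying argument is the same.
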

\begin{proof}
For any fixed point $(p, t)\in\p U_r\times(0, T]$ we may assume $\lt(\bar\nabla_{\al\beta}v(p, t)\rt),$ $1\leq\al, \beta<n$ to be diagonal.
Then at this point
\be\label{ku1.1}
\begin{aligned}
\Lambda_{ij}&=\left[\ju{cccc}{v_{11}-v&0&\cdots&v_{1n}\\
0&v_{22}-v&\cdots&v_{2n}\\
\vdots&\vdots&\ddots&\vdots\\
v_{1n}&v_{2n}&\cdots&v_{nn}-v}\right].
\end{aligned}
\ee
By Lemma 1.2 in \cite{CNS3}, we know if $v_{nn}$ is very large, the eigenvalues $\lambda_1, \cdots, \lambda_n$ of $\Lambda_{ij}$ are
\begin{align*}
\lambda_\al&=v_{\al\al}-v+o(1)\\
\lambda_n&=v_{nn}-v+O(1).
\end{align*}
Since $F_*$ is bounded we know $v_{nn}$ is bounded from above. $v_{nn}$ is bounded from below comes from the strict convexity of the flow.
\end{proof}

\subsection{$C^2$ global estimates for $u_r^*$-- solution of \eqref{ap.1}}
\label{c2gforur*}
In this subsection, we will still use the hyperbolic model and study the
equation \eqref{c2bf.1}. We will estimate $|\bar{\nabla}^2v|$
on $\bar{U}_r\times [0, T].$ Keep in mind that a bound on $|\bar{\nabla}^2v|$
yields a bound on $|D^2u_r^*|.$

\begin{lemm}
\label{c2gf-lem1}
Let v be the solution of \eqref{c2bf.1}. Denote the eigenvalues of $(\bar{\nabla}_{ij}v -v\delta_{ij})$ by
$\lambda[\bar{\nabla}_{ij}v -v\delta_{ij}]=(\lambda_1,\cdots, \lambda_n).$
Then, $\lambda[\bar{\nabla}_{ij}v -v\delta_{ij}]$ is bounded from above.
\end{lemm}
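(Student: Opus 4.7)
The plan is to apply the parabolic maximum principle to the largest eigenvalue of $\Lambda_{ij}=\bar{\nabla}_{ij}v-v\delta_{ij}$ on $\bar{U}_r\times[0,T]$. If this maximum is attained on the parabolic boundary, the bound follows at once from the smoothness of $v_0=\hat{u}_0^*/w^*-1$ (hence of $\lambda[\Lambda_{ij}(\cdot,0)]$) together with the boundary $C^2$ estimates already established in Lemmas \ref{c2bf-lem1}, \ref{c2bf-lem3}, and \ref{c2bf-lem4}. So one may assume the maximum is attained at some interior $(x_0,t_0)$ with $t_0>0$.

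At $(x_0,t_0)$, rotate coordinates so that $(\bar{\nabla}_{ij}v)$ is diagonal and the top eigenvalue of $\Lambda$ is $\Lambda_{11}$. I would test with
\[
W \;=\; \log\Lambda_{11} \;-\; Av \;+\; B|\bar{\nabla}v|^2,
\]
where $A,B>0$ are constants to be fixed, and recall that $v$ and $|\bar{\nabla}v|$ are uniformly controlled by Lemma \ref{c0fur*-lem} and Lemma \ref{c1forur*-lem3}. Differentiating the equation $v_t=-\tilde{F}^{-1}(\Lambda_{ij})$ twice in direction $e_1$ and commuting covariant derivatives on the hyperboloid (the sectional curvature is $-1$, producing extra terms of order $\bar{\nabla}v$ and $v$), and then applying $\mathcal{L}=\partial_t-\tilde{G}^{ij}\bar{\nabla}_{ij}$ to $W$, the concavity of $F_*=\tilde{F}^{1/\alpha}$ from assumption \eqref{condition 2} lets one discard the second-order jet contribution $-\tilde{G}^{ij,pq}\bar{\nabla}_1\Lambda_{ij}\bar{\nabla}_1\Lambda_{pq}$ with a favorable sign, after the standard rearrangement that rewrites $\tilde{F}^{-1}$ through its concave $1/\alpha$-th root.

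Collecting the surviving terms at $(x_0,t_0)$ and using $\bar{\nabla}W=0$, the resulting differential inequality reads, schematically,
\[
0 \;\leq\; \mathcal{L}W \;\leq\; C_1 \;+\; (C_2-A)\,\textstyle\sum_i\tilde{G}^{ii} \;+\; B\cdot(\text{good, }\geq\! c\sum_i\tilde{G}^{ii}) \;-\; (\text{penalty from \eqref{condition 5}}).
\]
The penalty comes from condition \eqref{condition 5}, which through the $F\leftrightarrow F_*$ Legendre duality and the homogeneity \eqref{condition 4} gives a lower bound of the form $\sum_i\tilde{G}^{ii}\Lambda_{ii}^2\geq \epsilon_0\tilde{F}^{-1}\sum_i\tilde{G}^{ii}\Lambda_{ii}$, forcing the top mode to be absorbed once $\Lambda_{11}$ is large; the remaining terms are closed off using the two-sided bound on $\tilde{F}$ furnished by Lemmas \ref{lc1-lem4-tG lower bound} and \ref{lc1-lem7-upperbound of tF}, which in particular implies $\sum_i\tilde{G}^{ii}\geq c_0(t)>0$. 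Choosing $A$ first large enough to dominate $C_2$ and then $B$ large enough to absorb the auxiliary $\log\Lambda_{11}$ terms yields $\Lambda_{11}(x_0,t_0)\leq C(\M_0,T,t_0)$.

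The main obstacle is the degenerate direction $i=1$: after the $\log$ is taken, the mixed term $\sum_i\tilde{G}^{ii}(\bar{\nabla}_1\Lambda_{ii})^2/\Lambda_{11}^2$ is no longer cleanly absorbed by the concavity inequality, so the Andrews--Gerhardt trick of grouping $(\tilde{G}^{ii}-\tilde{G}^{jj})/(\Lambda_{jj}-\Lambda_{ii})$ across distinct eigenvalues must be invoked, and a simple-eigenvalue perturbation of $\Lambda_{11}$ must be set up at the start. Additionally, because the flow is realized on the non-compact hyperbolic ball, one must verify that the constants $C_1,C_2,c_0$ appearing above depend on $r$ only through the already-uniform $C^0$ and $C^1$ bounds from Subsections \ref{c0forur*} and \ref{c1forur*}, so that the resulting upper bound on $\lambda[\Lambda]$ is independent of $r\in(0,1)$.
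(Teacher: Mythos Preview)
Your overall strategy---maximum principle on a barrier of the form $\log\Lambda_{11}+(\text{lower order})$, with the concavity of $\tilde F^{1/\alpha}$ used to discard the Hessian of $\tilde F$ and the Andrews--Gerhardt $I/J$ splitting used to control $\sum_i\tilde G^{ii}\Lambda_{11i}^2/\Lambda_{11}^2$---matches the paper's. The paper, however, uses a considerably simpler auxiliary function:
\[
\phi=\log\Lambda_{11}+Nx_{n+1},
\]
where $x_{n+1}$ is the height coordinate on $\mathbb H^n\subset\R^{n,1}$, which satisfies $\bar\nabla_{ij}x_{n+1}=x_{n+1}\delta_{ij}$. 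The computation then closes directly: the commutator produces a $+\sum\tilde G^{ii}$ which is absorbed by $-Nx_{n+1}\sum\tilde G^{ii}$, and after the $I/J$ split the only surviving positive term is $4\alpha|I|\,\tilde F^{-1}N^2|\bar\nabla x_{n+1}|^2/\Lambda_{11}$, which is $O(\Lambda_{11}^{-1})$ once one uses $\tilde F^{11}\leq\alpha\tilde F/\Lambda_{11}$ and the lower bound on $\tilde F$. Your choice $-Av+B|\bar\nabla v|^2$ could in principle play the same role (since $v<0$ is bounded away from zero), but it generates extra terms in the critical-point equation involving $v_i\Lambda_{ii}$, which then have to be re-absorbed; the paper avoids this entirely. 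Condition \eqref{condition 5} is \emph{not} used in the paper's proof of this lemma; the closing step relies only on homogeneity and the $\tilde F$-bounds.

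One point you should correct: your last paragraph asks that the constants be independent of $r$. That is not what Lemma \ref{c2gf-lem1} asserts---this is the \emph{global} $C^2$ estimate for the approximate problem on $\bar U_r$, and the bound is allowed to depend on $r$ through the boundary $C^2$ estimates of Subsection \ref{c2bforur*} (which involve, e.g., the barrier $h$ containing $(1-r^2)^{-1/2}$). The $r$-independent interior estimate is a separate statement, proved later (Lemma \ref{lc2-lem1}) with a genuinely different test function $\eta^\beta e^{\Phi(\vartheta)}\Lambda_{\xi\xi}$ built from a localizing factor $\eta$; your $|\bar\nabla v|^2$ term is closer in spirit to that argument than to the present one.
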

\begin{proof}
Differentiating \eqref{c2bf.1} twice we get
\be\label{c2gf.2}
(v_t)_i=\td{F}^{-2}\td{F}^{kl}\Lambda_{kli},
\ee
and
\be\label{c2gf.3}
(v_t)_{ij}=\td{F}^{-2}\lt\{\td{F}^{kl}\Lambda_{klij}+\td{F}^{pq, rs}\Lambda_{pqi}\Lambda_{rsj}\rt\}
-2\td{F}^{-3}\lt(\td{F}^{kl}\Lambda_{kli}\rt)\lt(\td{F}^{kl}\Lambda_{klj}\rt).
\ee
Set $M=\max\limits_{(p, t)\in\bar{U}_r\times[0, T]}\max\limits_{|\xi|=1,\xi\in T_p\mathbb{H}^n}(\log\Lambda_{\xi\xi}+Nx_{n+1}),$
where $N$ is a constant to be determined later and $x_{n+1}$ is the coordinate function.
By the discussion in Subsection \ref{c2bforur*} we already know that $|\lambda|$ is bounded on $\p U_r\times[0, T]$. Therefore, in the following, we
may assume $M$  is achieved at an interior point $(p_0, t_0)$ for some direction $\xi_0.$ Choosing an orthonormal frame
$\{\tau_1,\cdots , \tau_n\}$ around $p_0$ such that $\tau_1(p_0)=\xi_0$ and $\Lambda_{ij}(p_0, t_0)=\lambda_i\delta_{ij}.$

Now, let's consider the test function $\phi=\log\Lambda_{11}+Nx_{n+1}.$
At its maximum point $(p_0, t_0)$, we have
\be\label{c2g.critic}
0=\frac{\Lambda_{11i}}{\Lambda_{11}}+N(x_{n+1})_i
\ee
and
\[0\geq\frac{\Lambda_{11ii}}{\Lambda_{11}}-\frac{\Lambda^2_{11i}}{\Lambda^2_{11}}+N(x_{n+1})_{ii}.\]
Therefore,
\be\label{c2gf.4}
\begin{aligned}
&\phi_t-\td{F}^{-2}\td{F}^{ii}\phi_{ii}\\
&=\frac{({\Lambda}_{11})_t}{\Lambda_{11}}-\td{F}^{-2}\td{F}^{ii}
\lt\{\frac{\Lambda_{11ii}}{\Lambda_{11}}-\frac{\Lambda^2_{11i}}{\Lambda^2_{11}}+N(x_{n+1})_{ii}\rt\}\\
&=\frac{(v_t)_{11}-v_t}{\Lambda_{11}}-\td{F}^{-2}\td{F}^{ii}
\lt\{\frac{\Lambda_{ii11}+\Lambda_{ii}-\Lambda_{11}}{\Lambda_{11}}-\frac{\Lambda^2_{11i}}{\Lambda^2_{11}}+Nx_{n+1}\delta_{ii}\rt\}\\
&=\frac{1}{\Lambda_{11}}\td{F}^{-2}\lt\{\td{F}^{pq, rs}\Lambda_{pq1}\Lambda_{rs1}-2\td{F}^{-1}(\nabla_1\td{F})^2\rt\}\\
&+\frac{1}{\td{F}\Lambda_{11}}-\td{F}^{-2}\td{F}^{ii}
\lt\{\frac{\Lambda_{ii}-\Lambda_{11}}{\Lambda_{11}}-\frac{\Lambda^2_{11i}}{\Lambda^2_{11}}+Nx_{n+1}\delta_{ii}\rt\}.
\end{aligned}
\ee
By our assumption \eqref{condition 2} we know that $\td{F}^{\frac{1}{\al}}$ is concave, which implies
\[\td{F}^{pp, qq}\Lambda_{pp1}\Lambda_{qq1}+\lt(\frac{1}{\al}-1\rt)\td{F}^{-1}(\nabla_1\td{F})^2\leq 0.\]
Since
\[\td{F}^{pq, rs}\Lambda_{pq1}\Lambda_{rs1}=\td{F}^{pp, qq}\Lambda_{pp1}\Lambda_{qq1}
+\sum\limits_{p\neq q}\frac{\td{F}^{pp}-\td{F}^{qq}}{\lambda_p-\lambda_q}\Lambda^2_{pq1},\]
we have
\be\label{c2gf.5}
\begin{aligned}
&\phi_t-\td{F}^{-2}\td{F}^{ii}\phi_{ii}\\
&\leq\frac{2}{\Lambda_{11}}\td{F}^{-2}\sum\limits_{p>1}\frac{\td{F}^{pp}-\td{F}^{11}}{\lambda_p-\lambda_1}\Lambda^2_{11p}
+\frac{1-\al}{\td{F}\Lambda_{11}}\\
&-(Nx_{n+1}-1)\td{F}^{-2}\sum\td{F}^{ii}+\td{F}^{-2}\td{F}^{ii}\frac{\Lambda^2_{11i}}{\Lambda^2_{11}}.
\end{aligned}
\ee
Now let
\[I:=\{j: \td{F}^{jj}\leq 4\td{F}^{11}\},\,\, J:=\{j: \td{F}^{jj}>4\td{F}^{11}\},\]
then combining with \eqref{c2g.critic}, equation \eqref{c2gf.5} becomes
\be\label{c2gf.6}
\begin{aligned}
&\phi_t-\td{F}^{-2}\td{F}^{ii}\phi_{ii}\\
&\leq\frac{1-\al}{\td{F}\Lambda_{11}}-(Nx_{n+1}-1)\td{F}^{-2}\sum\td{F}^{ii}+4\al\td{F}^{-2}|I|\frac{\td{F}N^2(x_{n+1})_i^2}{\Lambda_{11}}.
\end{aligned}
\ee
Notice that here we used $\td{F}^{11}\leq\frac{\al\td{F}}{\Lambda_{11}}.$
Moreover, since $F_*$ is bounded, concave and satisfies \eqref{condition 3}, we have $\sum\td{F}^{ii}\geq\al F_*^{\al-1}>c_0.$ Choosing $N=2$ we can see that if $\phi$ achieves its maximum at an interior point
$(p_0, t_0),$ then at this point $|\bar{\nabla}^2 v|$ is bounded from above. Otherwise, the maximum is achieved at
$\lt(\partial U_r\times(0, T]\rt)\bigcup\lt(U_r\times\{0\}\rt).$ Therefore, Lemma \ref{c2gf-lem1} is proved.
\end{proof}

\bigskip
\section{Local estimates}
\label{localest}
In this section we want to show that there exists a subsequence of $\{u^*_r\}$ that converges to the desired solution $u^*$ of  \eqref{ss.3}.

\subsection{Local $C^1$ estimates}
\label{lc1}
In this subsection we will prove the local $C^1$ estimates for $v_r$. We will study \eqref{ap.1} and consider the local $C^1$ estimates for
$u^*_r$ instead. For readers' convenience, we rewrite equation \eqref{ap.1} here:
\be\label{lc1.1}
\left\{
\begin{aligned}
(u^*_r)_t&=-F_*^{-\al}(w^*\gamma^*_{ik}u^*_{kl}\gamma^*_{lj})w^*\,\,&\mbox{in $B_r\times(0, T]$}\\
u^*_r(\cdot, t)&=\hat{u}^*_0-[(1+\al)\td{t}]^{\frac{1}{1+\al}}\sqrt{1-r^2}\,\,&\mbox{on $\p B_r\times[0, T]$}\\
u^*_r(\cdot, 0)&=\hat{u}^*_0-\sqrt{1-|\xi|^2}:=u^*_0\,\,&\mbox{on $B_r\times\{0\},$}
\end{aligned}
\right.
\ee
where $\td{t}=t+(1+\al)^{-1}.$ In the following, denote $\td{F}=F_*^{\al}$ and when there is no confusion, we will omit the superscript $r.$
\begin{lemm}
\label{lc1-lem1}
Let $u^*_r$ be the solution of \eqref{lc1.1} and suppose $\p$ is some angular derivative. Then we have $|\partial u^{*}_r|$ is bounded on $\bar{B}_r\times[0, T].$
\end{lemm}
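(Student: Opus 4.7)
The plan is to exploit the rotational invariance of the nonlinear operator on the right-hand side of \eqref{lc1.1}. Introducing
\[\Phi[u](\xi):=-F_*^{-\al}(w^*\gas_{ik}u_{kl}\gas_{lj})w^*,\]
observe that $w^*=\sqrt{1-|\xi|^2}$ is radial while $\gas_{ik}(\xi)=\delta_{ik}-\frac{\xi_i\xi_k}{1+w^*}$ transforms as a symmetric $2$-tensor under rotations of $\R^n$, so for every $R\in O(n)$ one has $\Phi[u\circ R]=\Phi[u]\circ R$. Let $R(s)$ denote the one-parameter subgroup generated by the angular field $\partial=\xi_k\frac{\p}{\p\xi_l}-\xi_l\frac{\p}{\p\xi_k}$. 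Differentiating the invariance identity at $s=0$ yields
\[\partial\bigl(\Phi[u_r^*]\bigr)=\Phi'[u_r^*]\cdot\partial u_r^*,\]
where $\Phi'[u_r^*]$ is the Fr\'echet linearization of $\Phi$ at $u_r^*$, a second-order linear operator whose coefficients depend on $\xi$ and on derivatives of $u_r^*$ up to order two.

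Applying $\partial$ to the evolution equation \eqref{lc1.1}, the scalar $\partial u_r^*$ therefore satisfies the \emph{linear} parabolic equation
\[(\partial u_r^*)_t=\Phi'[u_r^*]\cdot\partial u_r^*\]
on $\bar B_r\times(0,T]$, with no inhomogeneous term. The coefficients of $\Phi'[u_r^*]$ are bounded: the principal part is controlled by the global $C^2$ bounds obtained in Subsections \ref{c2bforur*}--\ref{c2gforur*} together with the two-sided bounds on $\td F$ from Lemmas \ref{lc1-lem4-tG lower bound} and \ref{lc1-lem7-upperbound of tF}, while any zeroth-order piece arising from $[\partial,D^2]$ is uniformly bounded on $\bar B_r$. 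The weak maximum principle thus gives
\[\sup_{\bar B_r\times[0,T]}|\partial u_r^*|\leq e^{CT}\sup_{\text{parabolic boundary}}|\partial u_r^*|.\]

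To close the argument I still have to bound $\partial u_r^*$ on the parabolic boundary. At $t=0$, $u_r^*(\cdot,0)=u_0^*$, so $\partial u_r^*(\cdot,0)=\partial u_0^*$, bounded by Lemma \ref{c1forur*-lem4}. On the lateral boundary $\p B_r\times(0,T]$, $\partial$ is tangent to $\p B_r$ (rotations preserve spheres) and annihilates the radial function $\sqrt{1-|\xi|^2}$, hence
\[\partial u_r^*\big|_{\p B_r}=\partial\bigl(\hat u_0^*-[(1+\al)\td t]^{\frac{1}{1+\al}}\sqrt{1-r^2}\bigr)\big|_{\p B_r}=\partial u_0^*\big|_{\p B_r},\]
again bounded by Lemma \ref{c1forur*-lem4}. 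Combining these two bounds yields $|\partial u_r^*|\leq C$ on $\bar B_r\times[0,T]$.

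The main obstacle is verifying that differentiating the fully nonlinear equation along $\partial$ produces no inhomogeneous term: this is precisely the role of the rotational invariance, which ensures that the explicit $\xi$-dependence of $\Phi$ (through $w^*$ and $\gas$) combines correctly with the commutator $[\partial,D^2]$ to leave only terms proportional to $\partial u_r^*$ and its derivatives. Once this commutation computation is carried out, the remainder is a routine linear maximum-principle comparison against the boundary data controlled by Lemma \ref{c1forur*-lem4}.
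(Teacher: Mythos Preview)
Your approach is correct and coincides with the paper's: both exploit rotational invariance to show that $\partial u_r^*$ satisfies a homogeneous linear parabolic equation, then apply the maximum principle and control the parabolic boundary via Lemma~\ref{c1forur*-lem4}. One sharpening: the Fr\'echet linearization $\Phi'[u]$ is purely second order (no zeroth-order piece arises from $[\partial,D^2]$, since those commutator terms are absorbed by the invariance identity itself), so the $e^{CT}$ factor is unnecessary---the paper records the clean identity $(\partial u^*)_t=(w^*)^{1-\al}\td F^{-2}\td F^{kl}\gas_{ik}(\partial u^*)_{ij}\gas_{jl}$ (via Lemma~29 of \cite{RWX}) and concludes directly that the extrema of $\partial u^*$ lie on the parabolic boundary.
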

\begin{proof}
We assume that $\partial=\xi_1\frac{\partial}{\partial\xi_2}-\xi_2\frac{\partial}{\partial\xi_1}.$
Since $u^*_t=-\td{F}^{-1}(\ga_{ik}^*u^*_{ij}\gas_{lj})(w^*)^{1-\al},$ by Lemma 29 of \cite{RWX} we get
\[(\partial u^*)_t-(w^*)^{1-\al}\td{F}^{-2}\td{F}^{kl}\gas_{ik}(\partial u^*)_{ij}\gas_{jl}=0.\]
Using the maximum principle we get that $\partial u$ achieves its maximum and minimum on the parabolic boundary.
Lemma \ref{lc1-lem1} follows directly from Lemma \ref{c1forur*-lem4}.
\end{proof}

\begin{lemm}
\label{lc1-lem2}
Let $u^*_r$ be the solution of \eqref{lc1.1} and suppose $\p$ is some angular derivative. Then we have $\partial^2 u^{*}_r$ is bounded from above on $\bar{B}_r\times[0, T].$
\end{lemm}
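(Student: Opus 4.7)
The plan is to differentiate the equation satisfied by $\p u^*_r$ (which appears in the proof of Lemma \ref{lc1-lem1}) once more in the angular direction, derive a parabolic differential inequality for $\p^2 u^*_r$, and then close the argument with the maximum principle.

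From Lemma \ref{lc1-lem1}, $\p u^*_r$ already solves the linear parabolic equation
\[
(\p u^*_r)_t=(w^*)^{1-\al}\td F^{-2}\td F^{kl}\gas_{ik}(\p u^*_r)_{ij}\gas_{jl}.
\]
Applying $\p$ once more and using that $w^*$ and $\sqrt{1-|\xi|^2}$ are rotationally invariant while $\gas_{ij}$ transforms tensorially under rotations, one obtains an evolution equation of the form
\[
(\p^2 u^*_r)_t-\mathcal{A}^{ij}(\p^2 u^*_r)_{ij}=w^*\,\Phi^{ij,kl}(\Lambda)\,\p\Lambda_{ij}\,\p\Lambda_{kl}+(\text{lower order}),
\]
where $\Phi(\Lambda):=-\td F^{-1}(\Lambda)$, $\mathcal{A}^{ij}$ is the same uniformly elliptic matrix appearing in the equation for $\p u^*_r$, and the lower order terms are controlled by $|u^*_r|_{C^1}$, already bounded via Lemma \ref{c0fur*-lem} and Lemma \ref{lc1-lem1}. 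The commutator analysis here is a direct iteration of the Lemma 29 of \cite{RWX} argument used in Lemma \ref{lc1-lem1}.

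The key step is to show that the quadratic form $\Phi^{ij,kl}(\Lambda)$ is nonpositive, i.e.\ that $\Phi=-\td F^{-1}$ is concave in the matrix argument. Writing $\td F=F_*^\al$ and differentiating twice, the concavity of $F_*$ (condition \eqref{condition 2}) yields
\[
\td F^{ij,kl}\eta_{ij}\eta_{kl}\leq \frac{\al-1}{\al}\,\td F^{-1}(\td F^{ij}\eta_{ij})^2,
\]
which combined with $\Phi^{ij,kl}=-2\td F^{-3}\td F^{ij}\td F^{kl}+\td F^{-2}\td F^{ij,kl}$ produces
\[
\Phi^{ij,kl}\eta_{ij}\eta_{kl}\leq -\left(1+\tfrac{1}{\al}\right)\td F^{-3}(\td F^{ij}\eta_{ij})^2\leq 0
\]
for every $\al>0$. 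Combined with the two-sided bounds on $\td F$ from Lemmas \ref{lc1-lem4-tG lower bound} and \ref{lc1-lem7-upperbound of tF}, the evolution of $\p^2 u^*_r$ collapses to a differential inequality of the form $(\p^2 u^*_r)_t-\mathcal{A}^{ij}(\p^2 u^*_r)_{ij}\leq C$, which is ideally suited to the maximum principle.

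It then remains to bound $\p^2 u^*_r$ on the parabolic boundary. At $t=0$, $\p^2 u^*_r=\p^2 u_0^*$, bounded above by Lemma \ref{c1forur*-lem5}. On $\p B_r\times(0,T]$, the Dirichlet data $\hat u_0^*-[(1+\al)\td t]^{\frac{1}{1+\al}}\sqrt{1-r^2}$ differs from $\hat u_0^*$ only by a function of $t$ (since $|\xi|=r$ there), and $\sqrt{1-|\xi|^2}$ is rotationally invariant on all of $B_r$, so $\p^2 u^*_r|_{\p B_r}=\p^2 u_0^*|_{\p B_r}$, again controlled by Lemma \ref{c1forur*-lem5}. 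The main obstacle is the concavity check for $\Phi=-\td F^{-1}$: the term $-2\td F^{-3}\td F^{ij}\td F^{kl}$ coming from twice differentiating $-\td F^{-1}$ is already negative, but when $\al>1$, $\td F$ itself may fail to be concave, and the required sign only emerges after invoking the concavity of $F_*=\td F^{1/\al}$ through the precise algebraic identity above.
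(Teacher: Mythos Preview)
Your proof is correct and follows essentially the same route as the paper: differentiate the flow equation twice in the angular direction, use the concavity of $\td G=-\td F^{-1}=-F_*^{-\al}$ to discard the quadratic Hessian term, and conclude by the maximum principle with parabolic boundary data controlled by Lemma~\ref{c1forur*-lem5}. The only difference is that the paper invokes Lemma~30 of \cite{RWX} to obtain the exact commutation $\partial^2\td a_{kl}=\gas_{ki}(\partial^2u^*)_{ij}\gas_{lj}$, so no lower-order remainder appears and the differential inequality is $\leq 0$ outright, making your appeal to the $\td F$ bounds from Lemmas~\ref{lc1-lem4-tG lower bound} and~\ref{lc1-lem7-upperbound of tF} unnecessary.
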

\begin{proof}
Let's denote $\td{G}=-\td{F}^{-1}=-F_*^{-\al},$ note that when $F_*$ is concave so is $\td{G}.$ A straightforward calculation yields,
\[(\partial^2u^*)_t=(w^*)^{1-\al}\td{G}^{pq, rs}(\partial \td{a}_{pq})(\partial \td{a}_{rs})+(w^*)^{1-\al}\td{G}^{kl}(\partial^2 \td{a}_{kl}),\]
where $\td{a}_{kl}=\gas_{ki}u^*_{ij}\gas_{jl}.$ By Lemma 30 of \cite{RWX} we get
\[(\partial^2u^*)_t-(w^*)^{1-\al}\td{G}^{kl}\lt(\gas_{ki}(\partial^2u^*)_{ij}\gas_{lj}\rt)\leq 0,\]
here the inequality is due to $\td{G}$ is concave.
Therefore, $\partial^2u^*$ achieves its maximum at the parabolic boundary and Lemma \ref{lc1-lem2} follows directly from Lemma \ref{c1forur*-lem5}.
\end{proof}

\begin{lemm}
\label{lc1-lem3}
Let $u^*_r$ be the solution of \eqref{lc1.1} and suppose $\p$ is some angular derivative. Then there is a positive constant $b$ such that
$$\frac{(r-|\xi|)\p^2u_r^*}{\sqrt{1-|\xi|^2}}>-b[(1+\al)\tilde{t}]^{\frac{1}{1+\al}}$$ on $\bar{B}_r\times[0, T].$ Here, $b$ only depends on $u_0^*.$
\end{lemm}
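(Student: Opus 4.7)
The plan is to apply the parabolic maximum principle to the auxiliary function
\[
\Phi(\xi,t):=(r-|\xi|)\,\p^{2}u_{r}^{*}(\xi,t)+b[(1+\al)\td t]^{\frac{1}{1+\al}}\sqrt{1-|\xi|^{2}}
\]
on $\bar B_{r}\times[0,T]$, where $b>0$ will be chosen depending only on $u_{0}^{*}$. Since $\sqrt{1-|\xi|^{2}}>0$, the stated estimate is equivalent to $\Phi>0$. On $\p B_{r}\times[0,T]$ the first summand vanishes and $\Phi=b[(1+\al)\td t]^{\frac{1}{1+\al}}\sqrt{1-r^{2}}>0$. At $t=0$, a direct expansion of $\p_{k,l}^{2}u_{0}^{*}$ produces a nonnegative quadratic form in $D^{2}u_{0}^{*}$ (nonnegative by convexity of $u_{0}^{*}$) plus a lower-order piece bounded in magnitude by $|\xi|\,|\nabla u_{0}^{*}|$. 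Under the Legendre transform $\nabla u_{0}^{*}=x$, and Conditions A bound $|x|\leq C/\sqrt{1-|\xi|^{2}}$; a short manipulation using $(r-|\xi|)(1+|\xi|)\leq 1-|\xi|^{2}$ then yields $(r-|\xi|)\p^{2}u_{0}^{*}/\sqrt{1-|\xi|^{2}}\geq-C$, so taking $b\geq C$ makes $\Phi(\cdot,0)\geq 0$.

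For the interior step I argue by contradiction. Suppose $\Phi$ attains a non-positive minimum at some $(\xi_{0},t_{0})\in B_{r}\times(0,T]$. By Lemma \ref{lc1-lem2} the quantity $\p^{2}u_{r}^{*}$ is a sub-solution of the linearized parabolic operator
\[
\mathcal{L}\phi:=\phi_{t}-(w^{*})^{1-\al}\td G^{kl}\gas_{ki}\phi_{ij}\gas_{lj}.
\]
Applying $\mathcal{L}$ to $\Phi$ via the product rule, the term containing $\mathcal{L}(\p^{2}u_{r}^{*})$ appears with the nonnegative coefficient $(r-|\xi|)$ and so has the correct sign, while the pure time derivative of the reservoir contributes the positive term $b[(1+\al)\td t]^{-\frac{\al}{1+\al}}\sqrt{1-|\xi_{0}|^{2}}$. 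The remaining pieces are mixed cross terms pairing $(\p^{2}u_{r}^{*})_{j}$ with derivatives of the weights $(r-|\xi|)$ and $\sqrt{1-|\xi|^{2}}$; I would use $\nabla\Phi(\xi_{0},t_{0})=0$ to express these in terms of $\p^{2}u_{r}^{*}$ and the weight derivatives alone, invoke the uniform bounds on $\td G^{ij}$ from Lemmas \ref{lc1-lem4-tG lower bound} and \ref{lc1-lem7-upperbound of tF}, and use the hypothesis $\Phi(\xi_{0},t_{0})\leq 0$, which bounds $-\p^{2}u_{r}^{*}$ from below by $b\sqrt{1-|\xi_{0}|^{2}}[(1+\al)\td t_{0}]^{\frac{1}{1+\al}}/(r-|\xi_{0}|)$. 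Collecting terms produces an inequality that is strictly violated once $b$ is chosen large enough in terms of $u_{0}^{*}$ alone, contradicting $\mathcal{L}\Phi(\xi_{0},t_{0})\leq 0$.

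The hard part is balancing the scales of the weighted cross terms. The spatial weight $(r-|\xi|)$ forces the first summand of $\Phi$ to vanish on $\p B_{r}$, ensuring boundary positivity, but its gradient is of unit size and its second derivative is singular at the origin; derivatives of $\sqrt{1-|\xi|^{2}}$ blow up like $(1-|\xi|^{2})^{-1/2}$ near $\p B_{1}$. The choice of $\sqrt{1-|\xi|^{2}}$ as the weight on the reservoir is precisely what makes its time derivative decay at the rate $\sqrt{1-|\xi|^{2}}$, matching the singular cross terms after the critical-point substitution and letting $b$ be taken independent of $r$, $T$, and $t$. The Lipschitz non-smoothness of $r-|\xi|$ at the origin is a minor technicality that can be absorbed by working with $r^{2}-|\xi|^{2}$ and redefining $b$ to cover the factor $r+|\xi|\geq r$; the same scale-matching is also why the statement is naturally phrased with the factor $(r-|\xi|)/\sqrt{1-|\xi|^{2}}$ rather than a bare $\p^{2}u_{r}^{*}$.
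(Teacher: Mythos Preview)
Your approach has a genuine gap, and it also misses a much simpler argument.

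\medskip

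\textbf{The sign problem.} You invoke Lemma~\ref{lc1-lem2} to say that $\p^{2}u_{r}^{*}$ is a subsolution, i.e.\ $\mathcal{L}(\p^{2}u_{r}^{*})\leq 0$. But a subsolution inequality only yields \emph{upper} bounds via the maximum principle; here you need a \emph{lower} bound on $\p^{2}u_{r}^{*}$. Concretely, at an interior minimum of $\Phi$ you have $\mathcal{L}\Phi\leq 0$, and you want to derive a contradiction by showing $\mathcal{L}\Phi>0$. In the product expansion the term $(r-|\xi|)\,\mathcal{L}(\p^{2}u_{r}^{*})$ is nonpositive, not nonnegative as you claim, so it works \emph{against} you. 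Moreover you have no quantitative lower bound on $\mathcal{L}(\p^{2}u_{r}^{*})=(w^{*})^{1-\al}\td G^{pq,rs}(\p\td a_{pq})(\p\td a_{rs})$ without third-derivative control of $u_{r}^{*}$, which is not available at this stage. So the reservoir term cannot absorb it.

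A second issue: the ``uniform bounds on $\td G^{ij}$'' you cite are not established. Lemmas~\ref{lc1-lem4-tG lower bound} and~\ref{lc1-lem7-upperbound of tF} bound $\td F$, not the individual coefficients $\td G^{ij}$; and the bound in Lemma~\ref{lc1-lem7-upperbound of tF} depends on $T$, which would prevent $b$ from depending only on $u_{0}^{*}$.

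\medskip

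\textbf{What the paper actually does.} The paper's proof uses no parabolic maximum principle at all. It is a pointwise algebraic argument: at a given $(\xi,t)$ rotate so that $\xi=(|\xi|,0,\dots,0)$ and $\p=\xi_{1}\p_{2}-\xi_{2}\p_{1}$. Then a direct computation gives
\[
\p^{2}u_{r}^{*}=\xi_{1}^{2}\,(u_{r}^{*})_{22}-\xi_{1}\,(u_{r}^{*})_{1}.
\]
Convexity of $u_{r}^{*}$ makes the first term nonnegative, so $\p^{2}u_{r}^{*}\geq -|\xi|\,(u_{r}^{*})_{1}$. To bound $(u_{r}^{*})_{1}$ from above one uses convexity along the radial segment from $\xi$ to $(r,0,\dots,0)$ together with the $C^{0}$ barriers of Lemma~\ref{c0fur*-lem}; this yields
\[
(r-|\xi|)\,(u_{r}^{*})_{1}\;\leq\;a_{1}\sqrt{1-|\xi|^{2}}\,[(1+\al)\td t]^{\frac{1}{1+\al}},
\]
and the stated inequality follows immediately with $b=a_{1}$. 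No PDE is differentiated, no linearized operator appears, and the only prior input is the $C^{0}$ estimate.
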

\begin{proof}
Without loss of generality we assume
\[\xi=(\xi_1, 0, \cdots, 0),\,\, |\xi|=\xi_1,\,\,\mbox{and $\partial=\xi_1\frac{\partial}{\partial\xi_2}-\xi_2\frac{\partial}{\partial\xi_1}.$}\]
Then at $\xi$ we have
\be\label{lc1.2}
\partial^2u^*=\xi_1^2u^*_{22}-\xi_1u^*_1.
\ee
Notice that by the convexity of $u^*$ we have $u^*_{22}>0.$ Now at the point $\xi$, applying Lemma \ref{c0fur*-lem} we know
\[u^*>-a_1\sqrt{1-|\xi|^2}[(1+\al)\tilde{t}]^{\frac{1}{1+\al}}-\sqrt{1-r^2}[(1+\al)\tilde{t}]^{\frac{1}{1+\al}}.\]
Applying the convexity of $u_r^*$ again we get
\[-a_1\sqrt{1-|\xi|^2}[(1+\al)\tilde{t}]^{\frac{1}{1+\al}}-\sqrt{1-r^2}[(1+\al)\tilde{t}]^{\frac{1}{1+\al}}+(r-|\xi|)u_1^*<-\sqrt{1-r^2}[(1+\al)\tilde{t}]^{\frac{1}{1+\al}}.\]
Therefore,
\[u_1^*<\frac{a_1\sqrt{1-|\xi|^2}}{r-|\xi|}[(1+\al)\tilde{t}]^{\frac{1}{1+\al}}.\]
Combining with \eqref{lc1.2}, we get
\[\partial^2u^*>-|\xi|\frac{a_1\sqrt{1-|\xi|^2}}{r-|\xi|}[(1+\al)\tilde{t}]^{\frac{1}{1+\al}}.\]
This completes the proof of Lemma \ref{lc1-lem3}.
\end{proof}

\begin{lemm}
\label{lc1-lem5-upperbarrier for ur*}
 Suppose $u^*_r$ is the solution of \eqref{lc1.1}. Let $\frac{1}{2}<\td{r}<r<1,$ $\mathbb{S}^{n-1}(\td{r})=\{\xi\in\R^n\mid\sum\xi_i^2=\td{r}^2\}.$ For any $\hat{\xi}\in\mathbb{S}^{n-1}(\td{r}),$
there exists a function
\be\label{lc1.5}
h=-a\sqrt{1-|\xi|^2}+b_1\xi_1+\cdots+b_n\xi_n+d
\ee
such that $h(\hat{\xi})=u^{*}_r(\hat{\xi}, t)$ and $h(\xi)>u^{*}_r(\xi, t)$ for any
$(\xi, t)\in\lt(\mathbb{S}^{n-1}(\td{r})\setminus\{\hat{\xi}\}\rt)\times (0, T].$
\end{lemm}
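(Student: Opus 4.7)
The plan is to adapt the linear barrier construction of Lemma \ref{c1forur*-lem2} to the sphere $\mathbb{S}^{n-1}(\tilde r)$, with the time-dependent target function $u_r^*(\cdot,t)$ playing the role of $\varphi$. A crucial simplification is that on $\mathbb{S}^{n-1}(\tilde r)$ the term $-a\sqrt{1-|\xi|^2}$ is the constant $-a\sqrt{1-\tilde r^2}$, so the inequality $h(\xi)>u_r^*(\xi,t)$ on the sphere is, after absorbing this constant into $d$, equivalent to finding an affine function of $\xi$ that touches $u_r^*(\cdot,t)$ at $\hat\xi$ and strictly dominates it on $\mathbb{S}^{n-1}(\tilde r)\setminus\{\hat\xi\}$. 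The parameter $a$ therefore need not be constrained by the barrier condition on $\mathbb{S}^{n-1}(\tilde r)$; any fixed choice (say $a=[(1+\al)\tilde t]^{1/(1+\al)}$, reflecting the natural scaling of the flow) will work, with $d=d(\hat\xi,t)$ adjusted accordingly.

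By rotational symmetry I may take $\hat\xi=(\tilde r,0,\ldots,0)$ and consider an arbitrary great circle of $\mathbb{S}^{n-1}(\tilde r)$ through $\hat\xi$, parametrized as $\xi_1=\tilde r\cos s$, $\xi_2=\tilde r\sin s$, $\xi_j=0$ for $j\geq 3$. As in Lemma \ref{c1forur*-lem2}, set $\hat F(s):=h(\xi(s))-u_r^*(\xi(s),t)$, choose $b_1$ so that $\hat F(0)=0$ (using the free parameter $d$) and $b_2=\partial_{\xi_2}u_r^*(\hat\xi,t)$ so that $\hat F'(0)=0$, and set $b_j=\partial_{\xi_j}u_r^*(\hat\xi,t)$ for $j\geq 3$ so that all other tangential first derivatives match at $\hat\xi$. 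Then split the circle into $|s|\leq\pi/4$ and $|s|>\pi/4$: on the first regime positivity of $\hat F''(s)$ follows from $\cos s\geq\sqrt 2/2$ together with an upper bound on $|\partial_s^2 u_r^*|$ once $d$ is taken large; on the second regime $\hat F(s)>0$ follows directly from the dominant contribution of $d$ against the bounded quantities $b_i\tilde r$ and $u_r^*(\xi(s),t)$. Varying the great circle through $\hat\xi$ promotes the inequality to $h>u_r^*$ on all of $\mathbb{S}^{n-1}(\tilde r)\setminus\{\hat\xi\}$.

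The essential input replacing the $C^2$ bound on $\varphi$ is a uniform two-sided bound on the angular second derivatives of $u_r^*(\cdot,t)$ on $\mathbb{S}^{n-1}(\tilde r)\times(0,T]$. The upper bound is Lemma \ref{lc1-lem2}, which holds throughout $\bar B_r\times[0,T]$. The lower bound is where Lemma \ref{lc1-lem3} enters: since $r-|\xi|=r-\tilde r>0$ on $\mathbb{S}^{n-1}(\tilde r)$, that lemma gives $\partial^2 u_r^*\geq -b[(1+\al)\tilde t]^{1/(1+\al)}\sqrt{1-\tilde r^2}/(r-\tilde r)$, which is finite. The angular $C^1$ control comes from Lemma \ref{lc1-lem1}, so the $b_i$'s are bounded in terms of the initial data and $T$, and the required $d$ can be taken as an explicit function of $\tilde r$, $r-\tilde r$, $T$, and the initial data.

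The main obstacle is precisely the dependence of the lower bound on the angular Hessian on the gap $r-\tilde r$, which is what forces the lemma to be stated on the interior sphere $\mathbb{S}^{n-1}(\tilde r)$ rather than on $\mathbb{S}^{n-1}(r)$ itself and which causes the constants $a,b_i,d$ to blow up as $\tilde r\to r$. Once this dependence is carefully recorded, the two-regime argument of Lemma \ref{c1forur*-lem2} carries over verbatim, with $|u_r^*(\cdot,t)|_{C^1}$ and $|\partial^2 u_r^*|_{L^\infty(\mathbb{S}^{n-1}(\tilde r))}$ replacing $|\varphi|_{C^1}$ and $|\varphi|_{C^2}$, and the conclusion follows.
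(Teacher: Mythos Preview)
Your great-circle adaptation of Lemma~\ref{c1forur*-lem2} is exactly the paper's approach, and your argument does establish the statement as written. Two corrections are in order, however.

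First, only the \emph{upper} bound on $\partial^2 u_r^*$ is needed here: in the regime $|s|\le \pi/4$ the term appearing in $\hat F''$ is $-\partial_s^2 u_r^*$, so positivity of $\hat F''$ requires $\partial_s^2 u_r^*$ bounded above (Lemma~\ref{lc1-lem2}), not below. Since that bound holds uniformly on $\bar B_r\times[0,T]$, the constants $d,b_i$ do \emph{not} blow up as $\tilde r\to r$; your invocation of Lemma~\ref{lc1-lem3} and the attendant $(r-\tilde r)^{-1}$ dependence are superfluous for this lemma (they enter only in the companion lower-barrier Lemma~\ref{lc1-lem8-lowerbarrier for ur*}).

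Second, while you are right that on $\mathbb S^{n-1}(\tilde r)$ the term $-a\sqrt{1-|\xi|^2}$ is constant and hence any $a$ would suffice for the sphere inequality, the paper deliberately chooses $a>0$ with $a^\alpha\le 1/c_0$ (the constant of Lemma~\ref{lc1-lem4-tG lower bound}), so that $\tilde F[h]<\tilde F[u_r^*]$ pointwise. This curvature comparison is the whole reason $h$ carries the $-a\sqrt{1-|\xi|^2}$ term: it allows an elliptic comparison in $B_{\tilde r}$ and is what yields the radial inequality $(u_r^*)_1>h_1$ at $\hat\xi$ used in Lemma~\ref{lc1-lem6-lowerbound for vr}. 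Your suggested choice $a=[(1+\alpha)\tilde t]^{1/(1+\alpha)}$ violates this constraint for $t>0$ and would not serve that purpose.
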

\begin{proof}
The proof of this Lemma is a small modification of the proof of Lemma \ref{c1forur*-lem2}. For readers' convenience, we include it here.

Since $\ka^*[w^*\gas_{ik}h_{kl}\gas_{lj}]=a\delta_{ij},$ we can choose $a>0$ such that $a^\al\leq\frac{1}{c_0},$ where $c_0>0$ is the same as the one in Lemma
\ref{lc1-lem4-tG lower bound}.
This gives
\[\td{F}(\ka^*[w^*\gas_{ik}h_{kl}\gas_{lj}])<\td{F}(\ka^*[w^*\gas_{ik}(u^*_r(\cdot, t))_{kl}\gas_{lj}]).\]
By rotating the coordinate we may assume $\hat{\xi}=(\td{r}, 0, \cdots, 0).$ We choose
\be\label{lc1.6}
b_k=\frac{\p u^{*}_r}{\p\xi_k}(\td{r}, 0, \cdots, 0, t),\,\,k=2, 3, \cdots, n
\ee
and choose $b_1$ such that
\be\label{lc1.7}
u^{*}_r(\td{r}, 0, \cdots, 0, t)=-a\sqrt{1-\td{r}^2}+b_1\td{r}+d.
\ee
To choose $d$ we consider an arbitrary great circle $c(s)$ on $\mathbb{S}^{n-1}(\td{r})$
passing through $\hat{\xi}.$ For example, the circle
\[\xi_1=\td{r}\cos s, \xi_2=\td{r}\sin s, \xi_3=\cdots=\xi_n=0,\,\,\mbox{where $-\pi\leq s\leq \pi$.}\]
Let
\be\label{lc1.8}
\begin{aligned}
&\hat{F}(s)=(h-u^{*}_r)|_{c(s)}\\
&=[u^{*}_r(\td{r}, 0, \cdots, 0, t)+a\sqrt{1-\td{r}^2}-d]\cos s+b_2\td{r}\sin s+d-u^{*}_r(s, t)-a\sqrt{1-\td{r}^2},
\end{aligned}
\ee
where $u^{*}_r(s, t)=u^{*}_r(\td{r}\cos s, \td{r}\sin s, 0, \cdots, 0, t).$
We have $\hat{F}(0)=0, $ $\frac{d\hat{F}}{ds}(0)=0,$ and
\[\frac{d^2\hat{F}(s)}{ds^2}=[d-u^{*}_r(\td{r}, 0, \cdots, 0, t)-a\sqrt{1-\td{r}^2}]\cos s-b_2\td{r}\sin s-\frac{d^2u^{*}_r}{ds^2}.\]
When $-\frac{\pi}{4}\leq s\leq\frac{\pi}{4}$ we have
\[\frac{d^2\hat{F}(s)}{ds^2}\geq[d-u^{*}_r(\td{r}, 0, \cdots, 0, t)-a\sqrt{1-\td{r}^2}]\frac{1}{\sqrt{2}}-c_1,\]
where $c_1>0$ is a constant determined by Lemma \ref{lc1-lem1} and \ref{lc1-lem2}.
When $s\in[-\pi, -\frac{\pi}{4})\cup(\frac{\pi}{4}, \pi]$ we have
\be\label{lc1.9}
\begin{aligned}
\hat{F}(s)&=d(1-\cos s)+[u^{*}_r(\td{r}, 0, \cdots, 0, t)+a\sqrt{1-\td{r}^2}]\cos s+b_2\td{r}\sin s-u^{*}_r(s, t)-a\sqrt{1-\td{r}^2}\\
&\geq\lt(1-\frac{\sqrt{2}}{2}\rt)d-2a\sqrt{1-\td{r}^2}-2\max\limits_{-\pi\leq s\leq\pi}|u^{*}_r(s, t)|-\lt|\frac{du^{*}_r(s, t)}{ds}\rt|\\
&\geq \lt(1-\frac{\sqrt{2}}{2}\rt)d-c_2.
\end{aligned}
\ee
Choosing $d>0$ large we are done.
\end{proof}

\begin{lemm}
\label{lc1-lem6-lowerbound for vr}
Suppose $v_r$ is the solution of \eqref{c2bf.1}. Let $\frac{1}{2}<\td{r}<r<1,$ $\mathbb{S}^{n-1}(\td{r})=\{\xi\in\R^n\mid\sum\xi_i^2=\td{r}^2\}.$ For any $(\hat{\xi}, t)\in\mathbb{S}^{n-1}(\td{r})\times(0, T]$ we have $\bar{\nabla}_iv_r(\hat{\xi}, t)>c_3, \,\,1\leq i\leq n,$ for some constant $c_3$ that is independent of $\td{r}$ and $r.$
\end{lemm}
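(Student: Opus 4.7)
The plan is to upgrade the upper barrier $h$ from Lemma \ref{lc1-lem5-upperbarrier for ur*} from a sphere estimate to an upper barrier on the entire ball $B_{\tilde{r}}$ at the fixed time $t$, and then extract a one-sided lower bound on $\bar{\nabla} v_r$ at the touching point $\hat{\xi}$ by a Hopf-type boundary argument.

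Fix $(\hat{\xi}, t) \in \mathbb{S}^{n-1}(\tilde{r}) \times (0,T]$ and take $h = -a\sqrt{1-|\xi|^2} + b_1\xi_1+\cdots+b_n\xi_n+d$ from Lemma \ref{lc1-lem5-upperbarrier for ur*}, with $a$ chosen so that $a^\alpha < 1/c_0$, where $c_0$ is the constant from Lemma \ref{lc1-lem4-tG lower bound}. Set $V := h/w^*$, so that $V \geq v_r(\cdot,t)$ on $\mathbb{S}^{n-1}(\tilde{r})$ with equality at $\hat{\xi}$. Since the Hessian of $h$ comes entirely from the $-a\sqrt{1-|\xi|^2}$ piece, a direct computation as in \eqref{c2bf.0} gives $\tilde{F}(\Lambda(V)) \equiv a^\alpha < 1/c_0$, while Lemma \ref{lc1-lem4-tG lower bound} provides $\tilde{F}(\Lambda(v_r)) > 1/c_0$ on $B_{\tilde{r}}$. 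If $v_r - V$ attained a positive interior maximum at some $\xi_0 \in B_{\tilde{r}}$, then at $\xi_0$ one would have $\bar{\nabla}^2(v_r - V) \leq 0$ and $v_r > V$, hence $\Lambda(v_r) = \bar{\nabla}^2 v_r - v_r I < \bar{\nabla}^2 V - V I = \Lambda(V)$ in the matrix sense, and monotonicity of $\tilde{F}$ would force $\tilde{F}(\Lambda(v_r)) < \tilde{F}(\Lambda(V)) = a^\alpha < 1/c_0$, a contradiction. Therefore $v_r \leq V$ throughout $\bar{B}_{\tilde{r}}$, with equality at $\hat{\xi}$.

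Because $V - v_r \geq 0$ on $B_{\tilde{r}}$ and vanishes at the boundary point $\hat{\xi}$, the directional derivative of $V - v_r$ at $\hat{\xi}$ in the inward direction $-\hat{\xi}/|\hat{\xi}|$ is nonnegative, so the outward radial derivative of $v_r$ at $\hat{\xi}$ is bounded below by that of $V$. Combined with the equality of the tangential (to the sphere) components of $\nabla v_r$ and $\nabla V$ at $\hat{\xi}$ -- which follows automatically from $V \geq v_r$ on the sphere with equality at $\hat{\xi}$ -- this pins down every component $\bar{\nabla}_i v_r(\hat{\xi}, t)$ from below in terms of the explicit derivatives of $V$, themselves controlled by the constants $a$, $b_i$, $d$.

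The main obstacle is to verify that the resulting bound $c_3$ is truly independent of $\tilde{r}$ and $r$. The constants $a, b_i, d$ produced by Lemma \ref{lc1-lem5-upperbarrier for ur*} depend on $|u_r^*|_{C^1}$ and $|u_r^*|_{C^2}$ on the sphere, which by Lemmas \ref{lc1-lem1} and \ref{lc1-lem2} are controlled by $|u_0^*|_{C^3}$ uniformly in $\tilde{r}$ and $r$. When the pointwise Euclidean gradient bound is then transcribed into the components $\bar{\nabla}_i v_r$ of the hyperbolic covariant derivative, the apparent $1/w^*$ factors appearing in $\nabla V$ are absorbed by the conformal factors of the hyperbolic metric, producing a uniform lower bound $c_3$ that does not depend on either $\tilde{r}$ or $r$.
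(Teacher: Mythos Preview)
Your approach is essentially the same as the paper's. The paper likewise uses the barrier $h$ from Lemma~\ref{lc1-lem5-upperbarrier for ur*} (whose choice of $a$ with $a^\al\leq 1/c_0$ is precisely what drives the elliptic comparison you spell out) to bound the radial derivative from below, and then invokes Lemma~\ref{lc1-lem1} for the tangential components; the only cosmetic difference is that the paper carries out the derivative computation in the $u^*$-picture via the identity $\bar\nabla_i v=\gamma^*_{ik}u^*_k+v\xi_i$ rather than staying in the $v$-picture, and it leaves the interior comparison $h\geq u^*_r$ on $B_{\td r}$ implicit where you make it explicit. One small wording correction: in your last paragraph, Lemmas~\ref{lc1-lem1}--\ref{lc1-lem2} control only the \emph{angular} derivatives of $u^*_r$, not the full $C^1,C^2$ norms on the sphere; this is all that is needed (the $b_k$ for $k\geq 2$ are tangential derivatives and $d,b_1$ depend only on angular bounds and the $C^0$ estimate from Lemma~\ref{c0fur*-lem}), so the bound is indeed uniform in $\td r,r$, but the dependence should be phrased accordingly.
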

\begin{proof}
Without loss of generality we consider the derivative of  $v_r$ at the point $\hat{\xi}=(\td{r}, 0, \cdots, 0).$ Without causing confusion, in the following we will drop the subscript $r$.
A straightforward calculation yields (for details see the proof of Lemma 15 in \cite{WX20})
\be\label{lc1.10}
\begin{aligned}
\bar{\nabla}_iv&=\gas_{ik}u^*_k+v\xi_i\\
&=\lt(\delta_{ik}-\frac{\xi_i\xi_k}{1+w^*}\rt)u^*_k+v\xi_i\\
&=u^*_i-\frac{\xi_i\xi_k}{1+w^*}u^*_k+v\xi_i.
\end{aligned}
\ee
Then at $\hat{\xi},$ we have
\[\bar{\nabla}_1v=\sqrt{1-\td{r}^2}u_1^*+v\td{r}\]
and $\bar{\nabla}_iv=u^*_i$ for $i\geq 2.$
By Lemma \ref{lc1-lem5-upperbarrier for ur*} we obtain
\[u^*_1>h^*_1=\frac{a\td{r}}{\sqrt{1-\td{r}^2}}+b_1.\]
Therefore,
\[\bar{\nabla}_1v>a\td{r}+b_1\sqrt{1-\td{r}^2}+v\td r.\]
Note that,  by the Subsection \ref{c0forur*} we get
\[-(a_1+1)[(1+\al)\td{t}]^{\frac{1}{1+\al}}\sqrt{1-|\xi|^2}<u^*_r<-a_0[(1+\al)\td{t}]^{\frac{1}{1+\al}}\sqrt{1-|\xi|^2}.\]
This implies
\[-(a_1+1)[(1+\al)\td{t}]^{\frac{1}{1+\al}}<v(\cdot, t)<-a_0[(1+\al)\td{t}]^{\frac{1}{1+\al}},\]
which in turn gives an uniform lower bound for $\bar{\nabla}_1v.$
 When $i=2, \cdots, n,$  by Lemma \ref{lc1-lem1}, we conclude that $\bar{\nabla}_iv=u^*_i$ is bounded at $(\hat{\xi}, t)$ for any $t\in[0, T]$ directly. Therefore, Lemma \ref{lc1-lem6-lowerbound for vr} is proved.
\end{proof}

Following the steps of Lemma \ref{lc1-lem5-upperbarrier for ur*} we can show
\begin{lemm}
\label{lc1-lem8-lowerbarrier for ur*}
Suppose $u^*_r$ is the solution of \eqref{lc1.1}.
Let $\frac{1}{2}<\td{r}<r<1,$ $\mathbb{S}^{n-1}(\td{r})=\{\xi\in\R^n\mid\sum\xi_i^2=\td{r}^2\}.$ For any $\hat{\xi}\in\mathbb{S}^{n-1}(\td{r}),$
there exists a function
\be\label{lc1.5}
\ubar{h}=-\ubar{a}\sqrt{1-|\xi|^2}+a_1\xi_1+\cdots+a_n\xi_n-a
\ee
such that $h(\hat{\xi})=u^{*}_r(\hat{\xi}, t)$ and $h(\xi)<u^{*}_r(\xi, t)$ for any
$(\xi, t)\in\lt(\mathbb{S}^{n-1}(\td{r})\setminus\{\hat{\xi}\}\rt)\times (0, T].$
Here $\ubar{a}$ is determined by $c_1(t)$ of Lemma \ref{lc1-lem7-upperbound of tF}, $a_1, \cdots, a_n$ are constants, and $a>0$ is chosen to be
$$a=10\lt[\frac{b\sqrt{1-\td{r}^2}}{r-\td{r}}[(1+\al)\tilde{t}]^{\frac{1}{1+\al}}+C(|u_0^*|)[(1+\al)\tilde{t}]^{\frac{1}{1+\al}}
+\ubar{a}\sqrt{1-\td{r}^2}\rt],$$
where $b$ is from Lemma \ref{lc1-lem3}.
\end{lemm}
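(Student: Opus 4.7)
The plan is to adapt the proof of Lemma \ref{lc1-lem5-upperbarrier for ur*} to the lower barrier by reversing every inequality. First I would fix $\ubar{a}>0$ so that $\ubar{a}^\al\geq 1/c_1(t)$, using the upper bound for $\td F$ supplied by Lemma \ref{lc1-lem7-upperbound of tF}; since $\ka^*[w^*\gas_{ik}\ubar{h}_{kl}\gas_{lj}]=\ubar{a}\delta_{ij}$ this guarantees
\begin{equation*}
\td{F}(\ka^*[w^*\gas_{ik}\ubar{h}_{kl}\gas_{lj}])\geq \td{F}(\ka^*[w^*\gas_{ik}(u^*_r)_{kl}\gas_{lj}]),
\end{equation*}
so that $\ubar{h}$ is a geometric subsolution of the $F_*$-operator. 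After rotating coordinates so that $\hat{\xi}=(\td r,0,\ldots,0)$, I would set $a_k=\p u^*_r/\p\xi_k(\hat{\xi},t)$ for $2\leq k\leq n$ to match tangential first derivatives, and determine $a_1$ by imposing the interpolation condition $\ubar{h}(\hat{\xi})=u^*_r(\hat{\xi},t)$, which yields $a_1\td r=u^*_r(\hat{\xi},t)+\ubar{a}\sqrt{1-\td r^2}+a$.

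Next I would run the great-circle Taylor argument from \cite{AML}: on any arc $c(s)=(\td r\cos s,\td r\sin s,0,\ldots,0)$ through $\hat\xi$ on $\mathbb{S}^{n-1}(\td r)$, define $\hat{F}(s)=(\ubar{h}-u^*_r)|_{c(s)}$. By construction $\hat{F}(0)=0$ and $\hat{F}'(0)=0$, and
\begin{equation*}
\hat{F}''(s)=-a_1\td r\cos s-a_2\td r\sin s-(u^*_r)''(s).
\end{equation*}
For $s\in[-\pi/4,\pi/4]$, Lemma \ref{lc1-lem2} bounds $(u^*_r)''$ from above while the critical one-sided bound $-(u^*_r)''(0)\leq b\sqrt{1-\td r^2}[(1+\al)\td t]^{1/(1+\al)}/(r-\td r)$ comes from Lemma \ref{lc1-lem3}; together with the bound on $|a_2|$ from Lemma \ref{lc1-lem1} and the $C^0$ bound on $u^*_r$ from Subsection \ref{c0forur*}, the prescribed size of $a$ forces $a_1\td r$ to dominate every other term, so $\hat{F}''(s)<0$ throughout this arc and hence $\hat{F}(s)<0$ for $0<|s|\leq\pi/4$.

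For $s\in[-\pi,-\pi/4)\cup(\pi/4,\pi]$ I would substitute $a_1\td r$ back into $\hat{F}$ and collect the $a$-dependence to rewrite
\begin{equation*}
\hat{F}(s)=-\bigl(a+\ubar{a}\sqrt{1-\td r^2}\bigr)(1-\cos s)+u^*_r(\hat\xi,t)\cos s+a_2\td r\sin s-u^*_r(c(s),t),
\end{equation*}
and then use $\cos s\leq\sqrt{2}/2$ together with the $C^0$ and $C^1$ bounds to obtain $\hat{F}(s)\leq-(1-\sqrt{2}/2)a+c$, which is strictly negative for the chosen $a$. Combining the two ranges yields $\ubar{h}<u^*_r$ on $\mathbb{S}^{n-1}(\td r)\setminus\{\hat\xi\}$.

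The main obstacle, and what distinguishes this argument from the upper-barrier case, is that forcing $\hat{F}''<0$ near $s=0$ requires a quantitative \emph{upper} bound on $-\p^2u^*_r$, which strict convexity does not supply for free. This is precisely the role of Lemma \ref{lc1-lem3}, whose $(r-\td r)^{-1}$ blow-up factor is what dictates the term $b\sqrt{1-\td r^2}[(1+\al)\td t]^{1/(1+\al)}/(r-\td r)$ in the prescribed size of $a$.
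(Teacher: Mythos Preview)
Your proposal is correct and follows exactly the approach the paper intends: the paper's own proof simply reads ``Following the steps of Lemma \ref{lc1-lem5-upperbarrier for ur*} we can show,'' and you have carried out that reversal in detail, correctly identifying Lemma \ref{lc1-lem3} as the new ingredient needed to bound $-(u^*_r)''$ from above (equivalently $\partial^2 u^*_r$ from below) and explaining how its $(r-\td r)^{-1}$ factor dictates the explicit form of $a$. Two very minor remarks: the appeal to Lemma \ref{lc1-lem2} is superfluous here (an upper bound on $(u^*_r)''$ only helps, so no control is needed in that direction), and the bound from Lemma \ref{lc1-lem3} should be invoked for all $s\in[-\pi/4,\pi/4]$, not just at $s=0$, which is fine since that lemma holds on all of $\bar B_r\times[0,T]$.
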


\begin{lemm}
\label{lc1-lem9-upperbound for vr}
Suppose $v_r$ is the solution of \eqref{c2bf.1}.
Let $\frac{1}{2}<\td{r}<r<1,$ $\mathbb{S}^{n-1}(\td{r})=\{\xi\in\R^n\mid\sum\xi_i^2=\td{r}^2\}.$ For any
$(\hat{\xi}, t)\in\mathbb{S}^{n-1}(\td{r})\times (0, T],$
we have $\bar{\nabla}_iv_r<2\ubar{a}+2a\sqrt{1-\td{r}^2}+c, \,\,1\leq i\leq n,$ where $c$ is from Lemma \ref{lc1-lem1} and $\ubar{a}$, $a$ are from Lemma \ref{lc1-lem8-lowerbarrier for ur*}.
\end{lemm}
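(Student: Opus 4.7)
The plan is to mirror the argument of Lemma \ref{lc1-lem6-lowerbound for vr}, substituting the lower barrier $\ubar{h}$ from Lemma \ref{lc1-lem8-lowerbarrier for ur*} in place of the upper barrier. After a rotation fix $\hat{\xi} = (\tilde{r}, 0, \ldots, 0)$ on $\mathbb{S}^{n-1}(\tilde{r})$. By the same computation as in \eqref{lc1.10},
\[
\bar{\nabla}_1 v_r(\hat{\xi}, t) = \sqrt{1-\tilde{r}^2}\,(u^*_r)_1(\hat{\xi}, t) + v_r(\hat{\xi}, t)\,\tilde{r}, \qquad \bar{\nabla}_i v_r(\hat{\xi}, t) = (u^*_r)_i(\hat{\xi}, t) \quad (2 \le i \le n).
\]
The tangential components ($i \ge 2$) are already controlled by the angular-derivative bound of Lemma \ref{lc1-lem1}, contributing only to the final constant $c$. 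The task therefore reduces to producing an upper bound for the radial derivative $(u^*_r)_1(\hat{\xi}, t)$.

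The core step is to promote $\ubar{h}$ into a one-sided barrier on the whole ball. By construction (parallel to Lemma \ref{lc1-lem5-upperbarrier for ur*}), $\ubar{h}(\hat{\xi}) = u^*_r(\hat{\xi}, t)$ and $\ubar{h} \le u^*_r$ on $\mathbb{S}^{n-1}(\tilde{r})$, while the eigenvalues of the curvature matrix associated to $\ubar{h}$ satisfy a strict $\tilde{F}$-comparison with those of $u^*_r$. Applying the elliptic comparison principle slice-by-slice in $t$ (exactly as in the second half of the proof of Lemma \ref{c1forur*-lem2}) extends the inequality to $\ubar{h} \le u^*_r$ throughout $\bar{B}_{\tilde{r}}$. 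Hence $u^*_r - \ubar{h}$ attains its minimum value $0$ at the boundary point $\hat{\xi}$, and the Hopf one-sided derivative inequality in the inward normal direction yields $(u^*_r)_1(\hat{\xi}, t) \le \ubar{h}_1(\hat{\xi}) = \frac{\ubar{a}\tilde{r}}{\sqrt{1-\tilde{r}^2}} + a_1$. The matching condition $\ubar{h}(\hat{\xi}) = u^*_r(\hat{\xi}, t)$ rewrites $a_1 \tilde{r} = u^*_r(\hat{\xi}, t) + \ubar{a}\sqrt{1-\tilde{r}^2} + a$, and combining with $\tilde{r} > 1/2$, $v_r < 0$, and the $C^0$ bound $u^*_r(\hat{\xi}, t) \le 0$ from Subsection \ref{c0forur*} yields
\[
\bar{\nabla}_1 v_r(\hat{\xi}, t) \le \ubar{a}\tilde{r} + a_1\sqrt{1-\tilde{r}^2} + v_r\tilde{r} \le 2\ubar{a} + 2a\sqrt{1-\tilde{r}^2} + c.
\]

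The main obstacle I anticipate is not a single clever idea but rather the careful bookkeeping: first, checking that the $\tilde{F}$-comparison built into the construction of $\ubar{h}$ in Lemma \ref{lc1-lem8-lowerbarrier for ur*} is strict enough to drive the sphere-to-ball extension at each fixed time; second, confirming that the $t$-dependent expressions for $\ubar{a}$ (determined through $c_1(t)$ of Lemma \ref{lc1-lem7-upperbound of tF}) and for $a$ (which contains the explicit factor $[(1+\al)\tilde{t}]^{1/(1+\al)}$) interact correctly once $a_1 \sqrt{1-\tilde{r}^2}$ is absorbed, so that the resulting upper bound has precisely the form $2\ubar{a} + 2a\sqrt{1-\tilde{r}^2} + c$ claimed in the statement.
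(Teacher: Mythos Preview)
Your proposal is correct and follows essentially the same route as the paper: rotate so that $\hat\xi=(\tilde r,0,\dots,0)$, bound the tangential components $\bar\nabla_i v_r$ ($i\ge 2$) by Lemma~\ref{lc1-lem1}, bound $(u^*_r)_1$ above by $\ubar h_1$ via the lower barrier of Lemma~\ref{lc1-lem8-lowerbarrier for ur*}, and then eliminate $a_1$ using the touching condition $\ubar h(\hat\xi)=u^*_r(\hat\xi,t)$ together with $\tilde r>\tfrac12$ and $u^*_r,\,v_r\le 0$. The paper simply cites Lemma~\ref{lc1-lem8-lowerbarrier for ur*} to obtain $u^*_1<\ubar h_1$ without further comment; your extra step of extending $\ubar h\le u^*_r$ from $\mathbb S^{n-1}(\tilde r)$ to all of $\bar B_{\tilde r}$ via the strict $\tilde F$-comparison (at fixed $t$, an interior minimum of $u^*_r-\ubar h$ would force $F_*(\kappa^*[u^*_r])\ge F_*(\kappa^*[\ubar h])$, contradicting the choice of $\ubar a$) is the natural justification the paper leaves implicit.
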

\begin{proof}
Without loss of generality we consider the derivative of  $v_r(\cdot, t)$ at the point $\hat{\xi}=(\td{r}, 0, \cdots, 0).$ Without causing confusion, in the following we will drop the subscript $r$. By Lemma \ref{lc1-lem8-lowerbarrier for ur*} we have
\[u_1^*<\ubar{h}_1=\frac{\ubar{a}\td{r}}{\sqrt{1-\td{r}^2}}+a_1\]
Therefore, at $\hat{\xi}$ we get
\[\bar{\nabla}_1v<\ubar{a}\td{r}+a_1\sqrt{1-\td{r}^2}+v\td{r}.\]
Since $-\ubar{a}\sqrt{1-\td{r}^2}+a_1\td{r}-a=u^*(\hat{\xi}, t),$ we obtain
\[a_1\td{r}\sqrt{1-\td{r}^2}=u^*(\hat{\xi}, t)\sqrt{1-\td{r}^2}+a\sqrt{1-\td{r}^2}+\ubar{a}(1-\td{r}^2),\]
which yields the upper bound for $\bar{\nabla}_1 v$ at $\hat{\xi}$. Exactly the same as the proof of Lemma \ref{lc1-lem6-lowerbound for vr}, we can show
$\bar{\nabla}_iv$ is bounded from above for $i\geq 2.$ Thus, Lemma \ref{lc1-lem9-upperbound for vr} is proved.
\end{proof}

\subsection{Local $C^2$ estimates}
\label{lc2}
First, let's note that for any $\xi_0\in B_1,$ there exists a linear function $l(\xi)$ such that
$l(\xi)-u_0^*(\xi)>\delta_1>0$ in a small neighborhood $U_{\xi_0}\subset B_1$ of $\xi_0,$ where $\delta_1>0$ is a small constant.
Moreover, $u_0^*(\xi)-l(\xi)>0$ on $\partial B_r$ for $r>r_0.$ From equation \eqref{lc1.1} and Lemma \ref{lc1-lem4-tG lower bound} we know
$\lt(u^*_r\rt)_t< 0,$  this yields
\[l(\xi_0)-u^*_r(\xi_0, t)> l(\xi_0)-u_0^*(\xi_0)\,\,\mbox{for $t>0$}.\]
Moreover, on $\p B_r,$ when $t\leq T$ and $r>r_0(T)$ we have $$u^*_r(\xi, t)-l(\xi)=u_0^*(\xi)+\sqrt{1-r^2}-[(1+\al)\td{t}]^{\frac{1}{1+\al}}\sqrt{1-r^2}-l(\xi)>0.$$
Therefore, we can use $l(\xi)$ to construct cutoff function and obtain the following local $C^2$ estimates.

\begin{lemm}
\label{lc2-lem1}
Let $u^*_r$ be the solution of \eqref{lc1.1}
For any $\xi_0\in B_1$ and $T>0,$ we have $$\lt|\ka^*[w^*\gas_{ik}(u^*_r)_{kl}\gas_{lj}](\xi_0, t)\rt|<C$$ for $t\in[0, T]$ and $1>r>r_0(T).$
Here $C=C(T)$ is independent of $r.$
\end{lemm}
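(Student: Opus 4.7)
The plan is to adapt the global calculation of Lemma \ref{c2gf-lem1} to a localized setting by inserting the linear function $l(\xi)$ supplied in the paragraph above the lemma as a cutoff. Set
\[
\eta(\xi,t)\;:=\;l(\xi)-u^*_r(\xi,t).
\]
By the preceding discussion, $\eta(\xi_0,t)>\delta_1>0$ for all $t\in[0,T]$, and since $u^*_r(\xi,t)-l(\xi)>0$ on $\partial B_r$ whenever $r>r_0(T)$, the set $\Omega_t:=\{\eta(\cdot,t)>0\}$ is compactly contained in $B_r$, uniformly in $r$. On the image $P^{-1}(\Omega_t)\subset U_r$ we may therefore use $\eta$ as a cutoff that blows down to $0$ on the spatial boundary.

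Mimicking Lemma \ref{c2gf-lem1}, I would work with the localized test function
\[
\Phi(\xi,t,\tau)\;:=\;\log\Lambda_{\tau\tau}+Nx_{n+1}(\xi)+\beta\log\eta(\xi,t),
\]
maximized over $\xi\in\overline{P^{-1}(\Omega_t)}$, $t\in[0,T]$, and unit $\tau\in T_\xi\mathbb H^n$. Since $\log\eta\to-\infty$ on the spatial boundary of $\Omega_t$, the maximum is attained either on $\{t=0\}$, where it is controlled by $|u_0^*|_{C^2}$ via \textbf{Conditions A}, or at an interior space-time point $(p_0,t_0)$. Diagonalising as in Lemma \ref{c2gf-lem1} so that $\tau=\tau_1$ and $\Lambda_{ij}(p_0,t_0)=\lambda_i\delta_{ij}$, the first-order criticality reads
\[
\frac{\Lambda_{11,i}}{\Lambda_{11}}+N(x_{n+1})_i+\beta\frac{\eta_i}{\eta}=0,
\]
and the second-order computation reproduces \eqref{c2gf.6} with the additional contribution
\[
\beta\,\tilde F^{-2}\tilde F^{ii}\!\left(\frac{\eta_{ii}}{\eta}-\frac{\eta_i^2}{\eta^2}\right)-\beta\frac{\eta_t}{\eta}.
\]

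Since $l$ is Euclidean-linear and the hyperbolic Christoffel symbols are bounded on compact subsets of $\mathbb H^n$, the gradient $\eta_i$ is bounded by the local $C^1$ estimates of Subsection \ref{lc1}, and the ``inhomogeneous'' part of $\eta_{ii}$ is likewise bounded. The Hessian piece $-\bar\nabla_{ii}u^*_r$ inside $\eta_{ii}$ is converted via \eqref{star} and the homogeneity identity $\tilde F^{ii}\Lambda_{ii}=\alpha\tilde F$ (from \eqref{condition 4}) into a controllable multiple of $\tilde F$, which is bounded on $[0,T]$ by Lemma \ref{lc1-lem7-upperbound of tF}. The time term $-\beta\eta_t/\eta=\beta(u^*_r)_t/\eta$ is controlled by Lemmas \ref{lc1-lem4-tG lower bound} and \ref{lc1-lem7-upperbound of tF} together with the strict positivity of $\eta(p_0,t_0)$. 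Choosing $N\gg\beta\gg 1$ and using the lower bound $\sum\tilde F^{ii}\geq c_0$ from the proof of Lemma \ref{c2gf-lem1}, the favourable term $(Nx_{n+1}-1)\tilde F^{-2}\sum\tilde F^{ii}$ absorbs the remaining bad contributions, yielding $\Lambda_{11}(p_0,t_0)\leq C(T,|u_0^*|_{C^3},c_0,c_1(T),\delta_1)$ independently of $r$. Evaluating $\Phi$ at $(\xi_0,t,\tau)$ and using $\eta(\xi_0,t)\geq\delta_1$ then gives a uniform upper bound on $\Lambda_{\tau\tau}(\xi_0,t)$ for every unit $\tau$, which combined with the strict convexity $\lambda_i>0$ preserved by the flow bounds $\kappa^*[w^*\gamma^*_{ik}(u^*_r)_{kl}\gamma^*_{lj}](\xi_0,t)$ as required.

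The main technical obstacle is keeping the two competing second-order contributions in balance: the Hessian piece inside $\beta\tilde F^{-2}\tilde F^{ii}\eta_{ii}/\eta$ has the same structural size as $\Lambda_{11}$ itself, and only the two-layer choice $N\gg\beta\gg 1$ together with the uniform lower bound $\sum\tilde F^{ii}\geq c_0$ coming from the concavity \eqref{condition 2} and homogeneity \eqref{condition 4} of $f_*$ allows that piece to be dominated uniformly in $r\in(r_0(T),1)$.
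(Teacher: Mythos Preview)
Your localization idea is the right one, but the test function you propose does not close. The dangerous term is the gradient--square piece that appears when you differentiate $\beta\log\eta$ twice: in the inequality $0\le \Phi_t-\td F^{-2}\td F^{ii}\Phi_{ii}$ it enters with the \emph{positive} sign
\[
+\;\beta\,\td F^{-2}\td F^{ii}\,\frac{\eta_i^{2}}{\eta^{2}}
\;\le\;
\frac{C\beta}{\eta^{2}}\,\td F^{-2}\!\sum_i\td F^{ii}.
\]
The only negative term carrying the factor $\td F^{-2}\sum_i\td F^{ii}$ in your computation is $-(Nx_{n+1}+\beta-1)\td F^{-2}\sum_i\td F^{ii}$, which is bounded once $N,\beta$ are fixed. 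At the interior maximum of $\Phi$ there is no a~priori lower bound on $\eta$, so the coefficient $C\beta/\eta^{2}$ can be arbitrarily large and cannot be absorbed by choosing $N\gg\beta\gg1$. Neither does the ``Hessian piece'' $-\td F^{-2}\td F^{ii}\eta_{ii}/\eta$ help: using $\bar\nabla_{ii}\eta=\eta-\Lambda_{ii}$ and homogeneity it only produces the bounded contribution $-\beta\,\td F^{-2}\sum_i\td F^{ii}+\alpha\beta/(\eta\td F)$, with no growth in $\Lambda_{11}$. In short, your scheme lacks a good term that \emph{grows} with $\Lambda_{11}$ and can therefore dominate the $1/\eta^{2}$ garbage coming from the cutoff.

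The paper supplies exactly such a term by inserting, in addition to the cutoff $\eta=\td s-v$, the Sheng--Urbas--Wang auxiliary function $\vartheta=v^{2}-|\bar\nabla v|^{2}$ into the test quantity
\[
W=\eta^{\beta}\exp\Phi(\vartheta)\,\Lambda_{\xi\xi},\qquad \Phi(\vartheta)=-\log(\vartheta+A).
\]
The crucial identity is $\mathcal L\vartheta=2(1-\alpha)v\td G+2\td G^{ii}\Lambda_{ii}^{2}$, so $\Phi'\mathcal L\vartheta$ contributes the large negative term $-\frac{1}{\vartheta+A}\sum_i\td g_i\Lambda_{ii}^{2}$, which grows linearly in $\lambda_1$ and absorbs every $1/\eta$ and $1/\eta^{2}$ term. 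This yields $\lambda_1\eta\le C$ at the maximum, and then $\eta(\xi_0,t)\ge\delta_1$ gives the desired bound at $\xi_0$. If you want to repair your argument, replace $Nx_{n+1}$ by a factor built from $\vartheta$ (or any function whose evolution equation manufactures $\sum_i\td g_i\Lambda_{ii}^{2}$); the weight $x_{n+1}$ from Lemma~\ref{c2gf-lem1} is adequate for the global estimate only because there the spatial boundary is already controlled.
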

\begin{proof}
From the discussion in Section 2 of \cite{WX20} we know that $\lambda(\bar{\nabla}_{ij}v-v\delta_{ij})=\ka^*[w^*\gas_{ik}(u^*_r)_{kl}\gas_{lj}]$
for $v=\frac{u^*_r}{w^*}.$
Therefore, in this proof we will consider equation \eqref{c2bf.1} and obtain an upper bound for $|\lambda|$ instead.
Let $l(\xi)$ be a linear function described as above and let $\td{s}=\frac{l(\xi)}{w^*},$ then it's easy to see that
\[\bar{\nabla}_{ij}\td{s}=\td{s}\delta_{ij}.\]
In the following, we denote
\[\eta=\td{s}-v\,\,\mbox{and $\vartheta=v^2-|\bar{\nabla}v|^2.$}\]
We also denote $\td{G}=-\td{F}^{-1},$ where $\td{F}=F_*^\al.$
Following \cite{SUW}, let's consider $$W(p, t)=\eta^\beta\exp\Phi(\vartheta)\Lambda_{\xi\xi},$$ where $\xi\in T_p\mathbb{H}^{n}$ is a unit vector. we may assume $W$ achives its maximum at $(p_0, t_0)$ for some direction $\xi_0$. Choosing an orthonormal frame $\{\tau_1, \cdots, \tau_n\}$ around
$p_0$ such that $\tau_1(p_0)=\xi_0$ and $\Lambda_{ij}(p_0, t_0)=\lambda_i\delta_{ij}.$ Differentiating $W$ at $(p_0, t_0)$ we obtain
\be\label{lc2.5}
0=\frac{W_i}{W}=\frac{\beta\eta_i}{\eta}+\Phi'\vartheta_i+\frac{\Lambda_{11i}}{\Lambda_{11}},
\ee
and
\be\label{lc2.6}
\begin{aligned}
0&\geq\frac{\beta\eta_{ii}}{\eta}-\frac{\beta\eta_i^2}{\eta^2}+\Phi'\vartheta_{ii}+\Phi''(\vartheta_i)^2\\
&+\frac{\Lambda_{ii11}}{\Lambda_{11}}+\frac{\Lambda_{ii}}{\Lambda_{11}}-1-\lt(\frac{\Lambda_{11i}}{\Lambda_{11}}\rt)^2,
\end{aligned}
\ee
where we have used the equality $\Lambda_{11ii}=\Lambda_{ii11}+\Lambda_{ii}-\Lambda_{11}.$
Moreover, at $(p_0, t_0)$ we have
\be\label{lc2.7}
\begin{aligned}
0&\leq\frac{\beta}{\eta}\mathcal{L}\eta+\Phi'\mathcal{L}\vartheta+\frac{1}{\Lambda_{11}}\mathcal{L}\Lambda_{11}\\
&+\beta\td{G}^{ii}\lt(\frac{\eta_i}{\eta}\rt)^2-\Phi''\td{G}^{ii}(\vartheta_i)^2+\td{G}^{ii}\lt(\frac{\Lambda_{11i}}{\Lambda_{11}}\rt)^2.
\end{aligned}
\ee
It's easy to obtain that
\be\label{lc2.8}
\mathcal{L}\eta=-\eta\td{G}^{ii}-(1+\al)\td{G},
\ee
and
\be\label{lc2.9}
\mathcal{L}\Lambda_{11}=\td{G}^{pq, rs}\Lambda_{pq1}\Lambda_{rs1}+(\al-1)\td{G}+\Lambda_{11}\sum\td{G}^{ii}.
\ee
Moreover, a straightforward calculation yields
\be\label{lc2.1}
\begin{aligned}
(\vartheta)_t&=2vv_t-2v_k(v_k)_t\\
&=2v\td{G}-2v_k\td{G}^{ii}\Lambda_{iik},
\end{aligned}
\ee
\be\label{lc2.2}
\vartheta_i=2vv_i-2v_kv_{ki}=2v_i\Lambda_{ii},
\ee
and
\be\label{lc2.3}
\begin{aligned}
\vartheta_{ii}&=2v_i^2+2vv_{ii}-2v^2_{ki}-2v_kv_{kii}\\
&=2v_i^2+2vv_{ii}-2v^2_{ii}-2v_k(\Lambda_{iik}+v_i\delta_{ik})\\
&=-2\Lambda^2_{ii}-2v\Lambda_{ii}-2v_k\Lambda_{iik}.
\end{aligned}
\ee
Thus we get
\be\label{lc2.4}
(\vartheta)_t-\td{G}^{ii}\vartheta_{ii}=2v\td{G}(1-\al)+2\td{G}^{ii}\Lambda^2_{ii}.
\ee
Therefore, \eqref{lc2.7} becomes
\be\label{lc2.10}
\begin{aligned}
0&\leq\frac{\beta}{\eta}(-\eta\sum\td{G}^{ii}-(1+\al)\td{G})\\
&+\Phi'\lt(2v\td{G}(1-\al)+2\td{G}^{ii}\Lambda^2_{ii}\rt)\\
&+\frac{1}{\Lambda_{11}}\lt[\td{G}^{pq, rs}\Lambda_{pq1}\Lambda_{rs1}+(\al-1)\td{G}+\Lambda_{11}\sum\td{G}^{ii}\rt]\\
&+\beta\td{G}^{ii}\lt(\frac{\eta_i}{\eta}\rt)^2-\Phi''\td{G}^{ii}(\vartheta_i)^2+\td{G}^{ii}\lt(\frac{\Lambda_{11i}}{\Lambda_{11}}\rt)^2.
\end{aligned}
\ee
Denote $I=\{j: \td{g}_j\leq 4\td{g}_1\}$ and $J=\{j: \td{g}_j>4\td{g}_1\}.$
When $j\in I$ we have,
\be\label{lc2.11}
\begin{aligned}
&\td{g}_j\lt(\frac{\Lambda_{11j}}{\Lambda_{11}}\rt)^2=\td{g}_j\lt(\frac{\beta\eta_j}{\eta}+\Phi'\vartheta_j\rt)^2\\
&\leq(1+\e)(\Phi')^2\td{g}_j(\vartheta_j)^2+\lt(1+\frac{1}{\e}\rt)\beta^2\td{g}_j\lt(\frac{\eta_j}{\eta}\rt)^2.
\end{aligned}
\ee
When $j\in J$ we have,
\be\label{lc2.12}
\begin{aligned}
&\beta\td{g}_j\lt(\frac{\eta_j}{\eta}\rt)^2=\frac{1}{\beta}\td{g}_j\lt(\Phi'\vartheta_j+\frac{\Lambda_{11j}}{\Lambda_{11}}\rt)^2\\
&\leq\frac{1+\e}{\beta}(\Phi')^2\td{g}_j(\vartheta_j)^2+\frac{(1+\e^{-1})}{\beta}\td{g}_j\lt(\frac{\Lambda_{11j}}{\Lambda_{11}}\rt)^2.
\end{aligned}
\ee
Therefore,
\be\label{lc2.13}
\begin{aligned}
&\beta\sum\limits_{j=1}^n\td{g}_j\lt(\frac{\eta_j}{\eta}\rt)^2+\sum\limits_{j=1}^n\td{g}_j\lt(\frac{\Lambda_{11j}}{\Lambda_{11}}\rt)^2\\
&\leq 4n[\beta+(1+\e^{-1})\beta^2]\td{g}_1\frac{|\bar{\nabla}\eta|^2}{\eta^2}\\
&+(1+\e)(1+\beta^{-1})(\Phi')^2\sum\limits_{j=1}^n\td{g}_j(\vartheta_j)^2\\
&+[1+(1+\e^{-1})\beta^{-1}]\sum\limits_{j\in J}\td{g}_j\lt(\frac{\Lambda_{11j}}{\Lambda_{11}}\rt)^2.
\end{aligned}
\ee

Combining \eqref{lc2.13} with \eqref{lc2.10} we get
\be\label{lc2.14}
\begin{aligned}
0&\leq (-\beta+1)\sum\td{G}^{ii}-\frac{(1+\al)\td{G}\beta}{\eta}+2v\td{G}(1-\al)\Phi'+\frac{(\al-1)\td{G}}{\Lambda_{11}}\\
&+2\Phi'\sum\td{g}_i\Lambda_{ii}^2+\frac{1}{\Lambda_{11}}\td{G}^{pq, rs}\Lambda_{pq1}\Lambda_{rs1}\\
&+4n[\beta+(1+\e^{-1})\beta^2]\td{g}_1\frac{|\nabla\eta|^2}{\eta^2}\\
&+[(1+\e)(1+\beta^{-1})(\Phi')^2-\Phi'']\sum\limits_{j=1}^n\td{g}_j(\vartheta_j)^2\\
&+[1+(1+\e^{-1})\beta^{-1}]\sum\limits_{j\in J}\td{g}_j\lt(\frac{\Lambda_{11j}}{\Lambda_{11}}\rt)^2.
\end{aligned}
\ee
Here we point out that $\bar{\nabla}_i\td{s}=\gas_{ik}\frac{\p l(\xi)}{\p\xi_k}+\td{s}\xi_i,$ which yields $|\bar{\nabla}\eta|$ is bounded.
Let $\Phi(\vartheta)=-\log(\vartheta+A),$ where $A=A(T, \M_0)>0$ large such that $\vartheta+A>1.$
Note that $\td{g}_j\vartheta^2_j=4\td{g}_jv^2_j\Lambda_{jj}^2,$ we want
\[[(1+\e)(1+\beta^{-1})(\Phi')^2-\Phi'']4|\nabla v|^2+\Phi'<0.\] This is  equivalent to
\be\label{lc2.15}
(\e+\beta^{-1}+\e\beta^{-1})4|\nabla v|^2-(\vartheta+A)<0.
\ee
By choosing $\e>0$ small and $\beta>0$ large inequality \eqref{lc2.15} can be satisfied. Then \eqref{lc2.14}
can be reduced to
\be\label{lc2.16}
\begin{aligned}
0&\leq\frac{C_1}{\eta}+C_2-\frac{1}{\vartheta+A}\sum\td{g}_i\Lambda_{ii}^2\\
&+\frac{1}{\Lambda_{11}}\td{G}^{pq, rs}\Lambda_{pq1}\Lambda_{rs1}+\frac{C_3(\beta+\beta^2)\td{g}_1}{\eta^2}\\
&+[1+(1+\e^{-1})\beta^{-1}]\sum\limits_{j\in J}\td{g}_j\lt(\frac{\Lambda_{11j}}{\Lambda_{11}}\rt)^2.
\end{aligned}
\ee
Since $\td{g}$ is concave we have
\[\frac{1}{\Lambda_{11}}\td{G}^{pq, rs}\Lambda_{pq1}\Lambda_{rs1}\leq\frac{2}{\Lambda_{11}}\sum\limits_{j\in J}\frac{\td{g}_1-\td{g}_j}{\Lambda_{11}-\Lambda_{jj}}\Lambda_{11j}^2\leq-\frac{3}{2\Lambda_{11}^2}\sum\limits_{j\in J}\td{g}_j\Lambda^2_{11j}.\]
When $1+\frac{1}{\beta}+\frac{1}{\beta\e}<\frac{3}{2}$ we have
\be\label{lc2.17}
0\leq \frac{C_1}{\eta}+C_2-\frac{1}{\vartheta+A}\sum\td{g}_i\Lambda^2_{ii}+\frac{C_3(\beta+\beta^2)\td{g}_1}{\eta^2}.
\ee
Since $\td{g}_i=\al F_*^{-\al-1}F_*^{ii},$ we get
$\td{g}_1<\frac{\al|\td{G}|}{\lambda_1.}$ By \eqref{lc2.17} and \eqref{condition 6} we conclude
\[\frac{1}{\vartheta+A}|\td{G}|\lambda_1\leq\frac{C_1}{\eta}+C_2+\frac{C_3(\beta+\beta^2)|\td{G}|}{\lambda_1\eta^2}.\]
Therefore, $\lambda_1\eta\leq C_4$ and the Lemma is proved.
\end{proof}

In Section \ref{approximate problem} we proved the existence of the solution $u^*_r$ to equation \eqref{ap.1}. In Section \ref{localest}
we obtained the local estimates of $\{u^*_r\}_{r>r_0}$ in $K\times[0, T]$ for any compact subset $K\subset B_1$ and $T>0$ Therefore, we can find a subsequence of
$u^*_r$ such that $u^*_r\goto u^*,$ and $u^*$ satisfies \eqref{ss.3} in $[0, T].$ Since $T>0$ is arbitrary, we obtain the existence of a solution to equation \eqref{ss.3} in $[0, \infty)$. Applying the standard maximum principle, we also know that the solution to \eqref{ss.3} is unique.  The Legendre transform of $u^*,$ which we denote by $u,$ is the desired entire solution of $\eqref{ss.1}.$

\bigskip
\section{Convergence}
\label{conv}
In this section, we will show after rescaling, the entire solution to equation \eqref{ss.1}
converges to the unit future hyperboloid as $t\rightarrow \infty$.

Let $\td{X}=\frac{X}{[(1+\al)\td{t}]^{\frac{1}{1+\al}}}$ and $\td{\tau}=\int_0^t[(1+\al)\td{t}]^{-1}dt.$ Then $\td{X}$ satisfies
\be\label{conv.1}
\begin{aligned}
(\td{X})_{\td{\tau}}&=(\td{X})_t\cdot\frac{\p t}{\p \td{\tau}}\\
&=F^{\al}(\ka)[(1+\al)\td{t}]^{\frac{\al}{1+\al}}\nu-\frac{X}{[(1+\al)\td{t}]^{\frac{1}{1+\al}}}.
\end{aligned}
\ee
Since $\td{\ka}=[(1+\al)\td{t}]^{\frac{1}{1+\al}}\ka,$ \eqref{conv.1} becomes
\be\label{conv.2}
(\td{X})_{\td{\tau}}=F^\al(\td{\ka})\nu-\td{X}.
\ee
Denote $\td{v}=\langle\td{X}, \nu\rangle,$ then under this rescaling we get
\be\label{conv.3}
\td{v}_{\td{\tau}}=-\lt(F_*^{-\al}(\td{\Lambda}_{ij})+\td{v}\rt),
\ee
where $\td{\Lambda}_{ij}=\bar{\nabla}_{ij}\td{v}-\td{v}\delta_{ij}.$
A straightforward calculation gives
\be\label{conv.4}
\begin{aligned}
(\td{\Lambda}_{ii})_{\td{\tau}}&=\lt(\bar{\nabla}_{ii}\td{v}\rt)_{\td{\tau}}-\td{v}_{\td{\tau}}\\
&=-\bar{\nabla}_{ii}F_*^{-\al}-\bar{\nabla}_{ii}\td{v}+F_*^{-\al}(\td{\Lambda})+\td{v}\\
&=-\bar{\nabla}_{ii}F_*^{-\al}+F_*^{-\al}(\td{\Lambda})-\td{\Lambda}_{ii}.
\end{aligned}
\ee
This yields
\be\label{conv.5}
\begin{aligned}
\lt(F_*^{-\al}\rt)_{\td{\tau}}&=-\al F_*^{-\al-1}F_*^{kk}(\td{\Lambda}_{kk})_{\td{\tau}}\\
&=\al F_*^{-\al-1}F_*^{kk}\bar{\nabla}_{kk}F_*^{-\al}-\al F_*^{-2\al-1}\sum F_*^{kk}+\al F_*^{-\al}.
\end{aligned}
\ee
In the following, we denote $L:=\frac{\p}{\p\td{\tau}}-\al F_*^{-\al-1}F_*^{kk}\bar{\nabla}_{kk}$
and $\Phi=F_*^{-\al}(\td{\Lambda})+\td{v}.$ Then we have
\[L\Phi=-\lt(1+\al F_*^{-\al-1}\sum F_*^{kk}\rt)\Phi.\]
Consider $\Psi=e^{2\td{\tau}}\Phi^2,$ then $\Psi$ satisfies
\be\label{conv.6}
L\Psi\leq-2\al F_*^{-\al-1}\sum F_*^{kk}\Psi\leq 0.
\ee
Therefore, by the maximum principle we know $\Phi^2\leq e^{-2\td{\tau}}c_0.$ This implies $\Phi\goto 0$ as $\td{\tau}\goto\infty.$
Since $\td{v}=\frac{\lt<X, \nu\rt>}{[(1+\al)\td{t}]^{\frac{1}{1+\al}}}=\frac{v}{[(1+\al)\td{t}]^{\frac{1}{1+\al}}},$ by Lemma \ref{c0fur*-lem}
we have $-(a_1+1)\leq\td{v}\leq-a_0.$ From earlier estimates for $F_*^{-\al}(\td{\Lambda})+\td{v},$ we get when $\td{\tau}>\tau_0>0,$
\be\label{convadd.1}
c_1\leq F_*^{-\al}(\td{\Lambda})\leq c_2.
\ee
Now suppose $\td{u}^*_\infty=-\sqrt{1-|\xi|^2},$ it's easy to see that $\td{u}^*_\infty$ satisfies
\be\label{conv.7}
\left\{\begin{aligned}
F_*^{-\al}(w^*\gas_{ik}(u^*_{\infty})_{kl}\gas_{lj})w^*&=-u^*_\infty\,\,&\mbox{in $B_1$}\\
u^*_\infty&=0\,\,&\mbox{on $\p B_1$}.
\end{aligned}\right.
\ee
Moreover, the graph of the Legendre transform of $\td{u}^*_\infty,$ denoted by $\td{\M}_\infty=\{(x, \td{u}_\infty(x))\mid x\in\R^n\},$ is a hyperboloid.

Let $u^*$ be the solution of \eqref{ss.3} and $\td{u}^*$ be the rescaling of $u^*.$ Then $\td{u}^*$ satisfies
\be\label{conv.7'}
\left\{\begin{aligned}
\lt(\td{u}^*\rt)_{\td{\tau}}&=-F_*^{-\al}(w^*\gas_{ik}(\td{u}^*)_{kl}\gas_{lj})w^*-\td{u}^*\,\,&\mbox{in $B_1\times(0, \infty)$}\\
\td{u}^*(\cdot, \td{\tau})&=0\,\,&\mbox{on $\p B_1\times(0, \infty)$}\\
\td{u}^*(\cdot, 0)&=u^*_0\,\,&\mbox{on $B_1\times\{0\}.$}
\end{aligned}\right.
\ee
\begin{lemm}
\label{convlem1}
Let $\td{u}^*$ be the solution of \eqref{conv.7'}. Then for
each sequence $\td{\tau}_j\goto\infty$ there is a subsequence $\td{\tau}_{j_k}\goto\infty$ such that $\td{u}^*(\cdot, \td{\tau}_{j_k})\goto\td{u}^*_{\infty}(\cdot)$ locally smoothly on $K,$ where $K\subset B_1$ is a compact set.
\end{lemm}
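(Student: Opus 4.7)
The plan is to extract the subsequence by Arzela--Ascoli compactness, once we have, on each compact $K\subset B_1$, uniform-in-$\td{\tau}$ local $C^k$ bounds on $\td{u}^*(\cdot,\td{\tau})$. The computation preceding the lemma already provides the time-independent ingredients I need: the uniform $C^0$ bound $-(a_1+1)\leq \td{v}\leq -a_0$ and the two-sided bound $c_1\leq F_*^{-\al}(\td{\Lambda})\leq c_2$ valid for $\td{\tau}\geq \tau_0$. The relation \eqref{star} identifies $\td{\Lambda}_{ij}$ with $w^*\gas_{ik}(\td{u}^*)_{kl}\gas_{lj}$ and $\td{v}$ with $\td{u}^*/w^*$, so any bound on $\td{v}$ in the hyperbolic metric transfers to a bound on $\td{u}^*$ in the Euclidean metric on compact subsets of $B_1$.

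First I would rerun the barrier constructions of Lemmas \ref{lc1-lem5-upperbarrier for ur*}--\ref{lc1-lem9-upperbound for vr} for the rescaled equation \eqref{conv.7'}. Those barriers depend only on the $C^0$ bounds and on an upper bound for $F_*^{-\al}$, both of which hold uniformly in $\td{\tau}$ in the rescaled picture, so they yield local $C^1$ bounds on $\td{v}$ on any compact set with constants independent of $\td{\tau}\geq\tau_0$. Next, the cutoff--maximum-principle argument of Lemma \ref{lc2-lem1}, applied to \eqref{conv.7'} (whose extra zeroth-order term $-\td{u}^*$ is harmless since it is uniformly bounded), produces a uniform bound on $\td{\Lambda}_{ij}$ on any compact $K\subset B_1$. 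Combined with the lower bound on $F_*^{-\al}$, this gives uniform parabolic ellipticity of \eqref{conv.7'} on $K$; the concavity property inherited from \eqref{condition 2} then allows a parabolic Evans--Krylov estimate, and Schauder bootstrap promotes the bounds to uniform-in-$\td{\tau}$ local $C^{k,\beta}$ estimates for every $k$. A diagonal extraction over an exhaustion of $B_1$ by compact sets then yields, for any $\td{\tau}_j\to\infty$, a subsequence $\td{\tau}_{j_k}$ along which $\td{u}^*(\cdot,\td{\tau}_{j_k})\to \td{u}^*_\infty$ locally smoothly on $K$.

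The main obstacle is not the compactness step itself but the uniform-in-$\td{\tau}$ character of the prerequisite estimates: Section \ref{localest} was written with constants of the form $C=C(T)$ on finite time intervals $[0,T]$, whereas here I need the constants to be independent of $\td{\tau}$ as $\td{\tau}\to\infty$. Carrying this through amounts to checking that every barrier and cutoff in Section \ref{localest} can be rebuilt from the stationary initial data $u_0^*$ and from the time-independent bounds $-(a_1+1)\leq\td{v}\leq -a_0$ and $c_1\leq F_*^{-\al}(\td{\Lambda})\leq c_2$ available in the rescaled variables. Once this uniformity is secured, the Arzela--Ascoli extraction is routine, and I do not attempt here to identify $\td{u}^*_\infty$ with any particular stationary solution of \eqref{conv.7}.
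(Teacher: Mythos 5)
Your plan to establish $\td{\tau}$-uniform local estimates for $\td{\Lambda}$ and then extract a subsequence by Arzel\`a--Ascoli is essentially the paper's plan, so the overall architecture is right. Where you propose to rerun the barriers of Section \ref{localest} entirely in rescaled variables, the paper keeps the unrescaled equation and feeds time-dependent parameters $a=c_3[(1+\al)\td{t}]^{1/(1+\al)}$, $\ubar{a}=c_5[(1+\al)\td{t}]^{1/(1+\al)}$, $\td{s}=-\frac{c}{w^*}[(1+\al)\td{t}]^{1/(1+\al)}$ and $\Phi(\vartheta)=-\log(\vartheta+A[(1+\al)\td{t}]^{2/(1+\al)})$ into Lemmas \ref{lc1-lem5-upperbarrier for ur*}, \ref{lc1-lem8-lowerbarrier for ur*} and \ref{lc2-lem1}, calibrated against the time-improving decay \eqref{convadd.2}; this yields $\Lambda_{\max}\leq c_7[(1+\al)\td{t}]^{1/(1+\al)}$, hence a uniform bound on $\td{\Lambda}_{\max}$, and together with \eqref{convadd.1} and \eqref{condition 7} a positive lower bound $\td{\Lambda}_{\min}\geq c_8$, which is what makes Evans--Krylov applicable. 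Your version should be equivalent, but you assert without verifying that the extra zeroth-order term $-\td{u}^*$ in \eqref{conv.7'} is harmless and you never record the lower bound $\td{\Lambda}_{\min}\geq c_8$; the paper's time-weighted choices are precisely what make the computation check out cleanly.

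The more serious gap is at the end: you explicitly decline to identify the subsequential limit, yet the lemma asserts convergence to the \emph{specific} function $\td{u}^*_\infty=-\sqrt{1-|\xi|^2}$, not merely to some accumulation point. Precompactness alone does not yield the statement. To conclude you must combine three facts that are all available but unused in your write-up: (i) the exponential decay $F_*^{-\al}(\td{\Lambda})+\td{v}\to 0$ established before the lemma, which forces any subsequential limit to solve the stationary problem \eqref{conv.7}; (ii) the two-sided bound $-(a_1+1)\sqrt{1-|\xi|^2}\leq\td{u}^*\leq -a_0\sqrt{1-|\xi|^2}$, which forces the limit to vanish on $\p B_1$; and (iii) uniqueness for the degenerate Dirichlet problem \eqref{conv.7} by a comparison argument. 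Step (iii) is not optional: Remark \ref{intrmk1} and \cite{WX22} emphasize that the self-expander equation admits infinitely many entire solutions, and it is only the boundary behavior in (ii) that singles out the hyperboloid. As written, your argument proves a strictly weaker statement.
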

\begin{proof} Without loss of generality we assume \eqref{convadd.1} holds for $\td{\tau}\geq0.$ In view of \eqref{convadd.1} we obtain
\be\label{convadd.2}
c_1[(1+\al)\td{t}]^{-\frac{\al}{1+\al}}\leq F_*^{-\al}(\Lambda)\leq c_2[(1+\al)\td{t}]^{-\frac{\al}{1+\al}}.
\ee
In Lemma \ref{lc1-lem5-upperbarrier for ur*} we can choose $a=c_3[(1+\al)\td{t}]^{\frac{1}{1+\al}},$ where $0<c_3<\lt(\frac{1}{c_2}\rt)^{\frac{1}{\al}},$
then we obtain
\be\label{convadd.3}
\bar{\nabla}_iv>c_4[(1+\al)\td{t}]^{\frac{1}{1+\al}},
\ee
for any $1\leq i\leq n$ and $\xi\in\mathbb{S}^{n-1}(\td{r}),$ $\frac{1}{2}<\td{r}<1.$
Similarly by choosing $\ubar{a}=c_5[(1+\al)\td{t}]^{\frac{1}{1+\al}}$ where $c_5>\lt(\frac{1}{c_1}\rt)^{\frac{1}{\al}}$
in Lemma \ref{lc1-lem8-lowerbarrier for ur*}, we get
\be\label{convadd.4}
\bar{\nabla}_iv<c_6[(1+\al)\td{t}]^{\frac{1}{1+\al}}.
\ee
Now in Lemma \ref{lc2-lem1} we will choose $\td{s}=-\frac{c}{w^*}[(1+\al)\td{t}]^{\frac{1}{1+\al}},$ where $c>0$ is a small number.
Moreover, we let $\Phi(\vartheta)=-\log(\vartheta+A[(1+\al)\td{t}]^{\frac{2}{1+\al}}),$ here $A>0$ such that
$\vartheta+A[(1+\al)\td{t}]^{\frac{2}{1+\al}}>[(1+\al)\td{t}]^{\frac{2}{1+\al}}.$ Then by Lemma \ref{lc2-lem1} we get in
$K\times[0, \infty)$ we have
\[\Lambda_{\max}\leq c_7[(1+\al)\td{t}]^{\frac{1}{1+\al}}.\]
This combining with \eqref{convadd.1} and condition \eqref{condition 7} yields in $K\times[0,\infty)$
\[0< c_8\leq \td{\Lambda}_{\min}\leq \td{\Lambda}_{\max}\leq c_7.\]
Applying standard regularity and convergence theorems, we prove Lemma \ref{convlem1}.
\end{proof}

\begin{rmk}
The solutions of the following equation
$$F^{\alpha}(\kappa)=-\langle X,\nu\rangle$$
is called the self-expender.  According to Husiken \cite{Hui}, the singularity of the mean curvature flow of a strictly closed convex hypersurface is a self-shrinker in Euclidean space.  Here, since the unit normal is time-like, we have the opposite sign.
\end{rmk}

\begin{rmk}
We want to point out that \textbf{Conditions A} is invariant under Lorentz transformation. It's clear that the conditions (1),(2),(3)
of \textbf{Conditions A} are invariant. We only need to check condition (4). Let $u_0$ satisfy
$$\sqrt{|x|^2+C_0}< u_0(x)<\sqrt{|x|^2+C_1},$$ for some suitable positive constant $C_0,C_1$.
Recall that the Lorentz transformation  is
\be\label{cb1.2}
\left\{
\begin{aligned}
x_1'&=\frac{x_1-\alpha x_{n+1}}{\sqrt{1-\alpha^2}}\\
x_i'&=x_i\\
x'_{n+1}&=\frac{x_{n+1}-\alpha x_1}{\sqrt{1-\alpha^2}}.
\end{aligned}
\right.
\ee
It's easy to check that
$$(x'_{n+1})^2<(x'_1)^2+\cdots+(x'_n)^2+C_1$$
provided that $(x_{n+1})^2<(x_1)^2+\cdots+(x_n)^2+C_1,$
and  $$(x'_{n+1})^2>(x'_1)^2+\cdots+(x'_n)^2+C_0,$$
provided that $(x_{n+1})^2>(x_1)^2+\cdots+(x_n)^2+C_0$.
\end{rmk}

\end{document}